\DeclarePairedDelimiter\abs{\lvert}{\rvert}%
\DeclarePairedDelimiter\norm{\lVert}{\rVert}%
\newcommand{\cardn}{\text{card}}
\newcommand{\hmat}{\mathcal{H}}
\newcommand{\rank}[1]{\text{rank}( #1 )}
\let\oldabs\abs
\def\abs{\@ifstar{\oldabs}{\oldabs*}}
\let\oldnorm\norm
\def\norm{\@ifstar{\oldnorm}{\oldnorm*}}
\newtheorem{theorem}{Theorem}[section]
\title{Data-Driven Construction of Hierarchical Matrices with Nested Bases}
\author{Difeng Cai\thanks{Department of Mathematics, Emory University, Atlanta, GA 30322 ({dcai7@emory.edu}, {yxi26@emory.edu}). The research of Difeng Cai and Yuanzhe Xi is supported by NSF award OAC 2003720 and RTG Grant DMS-2038118.}
\and Hua Huang \thanks{School of Computational Science and Engineering, Georgia Institute of Technology, Atlanta, GA 30332 ({huangh223@gatech.edu}, {echow@cc.gatech.edu}). The research of Hua Huang and Edmond Chow is supported by NSF award OAC 2003683.}
\and Edmond Chow$^{\dagger}$
\and Yuanzhe Xi$^{\ast}$}
\date{}
\begin{document}
\maketitle
\begin{abstract}
Hierarchical matrices provide a powerful representation for significantly reducing the computational complexity associated with dense kernel matrices. For general kernel functions, interpolation-based methods are widely used for the efficient construction of hierarchical matrices. In this paper, we present a fast hierarchical data reduction (HiDR) procedure with $O(n)$ complexity for the memory-efficient construction of hierarchical matrices with nested bases where $n$ is the number of data points. HiDR aims to reduce the given data in a hierarchical way so as to obtain $O(1)$ representations for all nearfield and farfield interactions.
Based on HiDR, a linear complexity $\mathcal{H}^2$ matrix construction algorithm is proposed. 
The use of data-driven methods enables {better efficiency than other general-purpose methods} and flexible computation without accessing the kernel function.
{Experiments demonstrate significantly improved memory efficiency of the proposed data-driven method compared to interpolation-based methods over a wide range of kernels.}
{Though the method is not optimized for any special kernel, benchmark experiments for the Coulomb kernel show that
the proposed general-purpose algorithm offers competitive performance for hierarchical matrix construction 
compared to several state-of-the-art algorithms for the Coulomb kernel.}
\end{abstract}

%

\section{Introduction} 
\label{sec:intro}

In various applications, the pairwise interaction between two objects is characterized by a non-local kernel function. A system of $n$ objects then gives rise to a $n$-by-$n$ dense kernel matrix. 
Such matrices arise frequently in integral equations \cite{colton1983integral,rokhlinpotential,rokhlinscattering}, astrophysics \cite{BarnesHut}, statistics \cite{multiquadric2004,eigCMAM}, machine learning \cite{bishopbook,svm2002}, etc.
A computational challenge is that the naive computational or storage cost associated with the dense kernel matrix is at least $O(n^2)$.
For the Coulomb kernel and its variants, pioneering work such as the Fast Multipole Method (FMM) \cite{rokhlinpotential,rokhlinscattering,GREENGARD1997280} and the Barnes-Hut algorithm \cite{BarnesHut}
use multilevel approximation to successfully reduce the cost to linear or quasilinear complexity.
These techniques were later generalized into the powerful algebraic framework of hierarchically low-rank matrices \cite{h2lec,hackweakadmissibility,hackintroh2app,hack2015book} for efficiently approximating general dense kernel matrices with nearly optimal computational cost.
Two widely used classes of hierarchical matrices are $\hmat$ matrices and $\hmat^2$ matrices.
The $\mathcal{H}$ matrix yields an $O(n\log n)$ representation and the $\mathcal{H}^2$ matrix yields an optimal $O(n)$ representation for approximating the kernel matrix and computing the matrix-vector multiplication.
For dense kernel matrices, the efficient construction of these hierarchical representations associated with general kernels remains a challenging problem.
In this paper, we address this issue for the $\mathcal{H}^2$ matrix representation.
For general kernels, interpolation-based methods are often adopted as a black-box tool used in hierarchical matrix construction.
However, as will be demonstrated in later sections, the use of interpolation nodes (or points outside the given dataset) may lead to  loss of accuracy.
We present a general-purpose data-driven framework for hierarchical matrix construction, that resolves these issues and offers improved efficiency.


Given a set of points and a kernel function, hierarchical matrix construction starts with an adaptive partitioning of the data.
The partitioning can be encoded by a tree structure in which each node represents a subset generated in the adaptive partitioning procedure.
For example, the root node corresponds to the entire set of $n$ points and its children nodes correspond to the subsets generated from the first partitioning of the dataset.
For hierarchical matrix construction, directly compressing the submatrix corresponding to a pair of nodes/subsets will be inefficient, since one of the subsets may contain $O(n)$ points (see Figure \ref{fig:YiYis1}) and the total cost for all such pairs will be at least $O(n^2)$.
We propose to first process the tree-structured data to obtain a reduced representation in which each node only corresponds to a small subset with $O(1)$ points while the farfield corresponds to $O(1)$ points as well.
These $O(1)$ subsets are called \emph{representor sets} in \cite{bebendorf2012}.
See Figure \ref{fig:YiYis2}.
Using representor sets, the multilevel compression of the kernel matrix can be rapidly computed.
To achieve optimal efficiency, we design a hierarchical data reduction procedure with computational cost of $O(n)$.
Different from existing approaches, the procedure operates entirely on the given data without accessing the kernel function and no \textit{algebraic} compression is performed.
Hence the approach is termed \emph{data-driven}.
We show how to incorporate hierarchical data reduction in hierarchical matrix construction to obtain a fast algorithm for general kernels.
Numerical experiments demonstrate the competitive performance of the new method in terms of generality, memory use, speed, and accuracy.

\begin{figure}[htbp] 
    \centering 
    \subfigure[Left: the farfield (shown in blue) of the orange box contains $O(n)$ points. Right: the $O(n)$ points are reduced to a subset of $O(1)$ points. (For another orange box, the farfield is different and the reduced subset is generally a different set of points.)]{\label{fig:YiYis1}\includegraphics[width=65mm]{./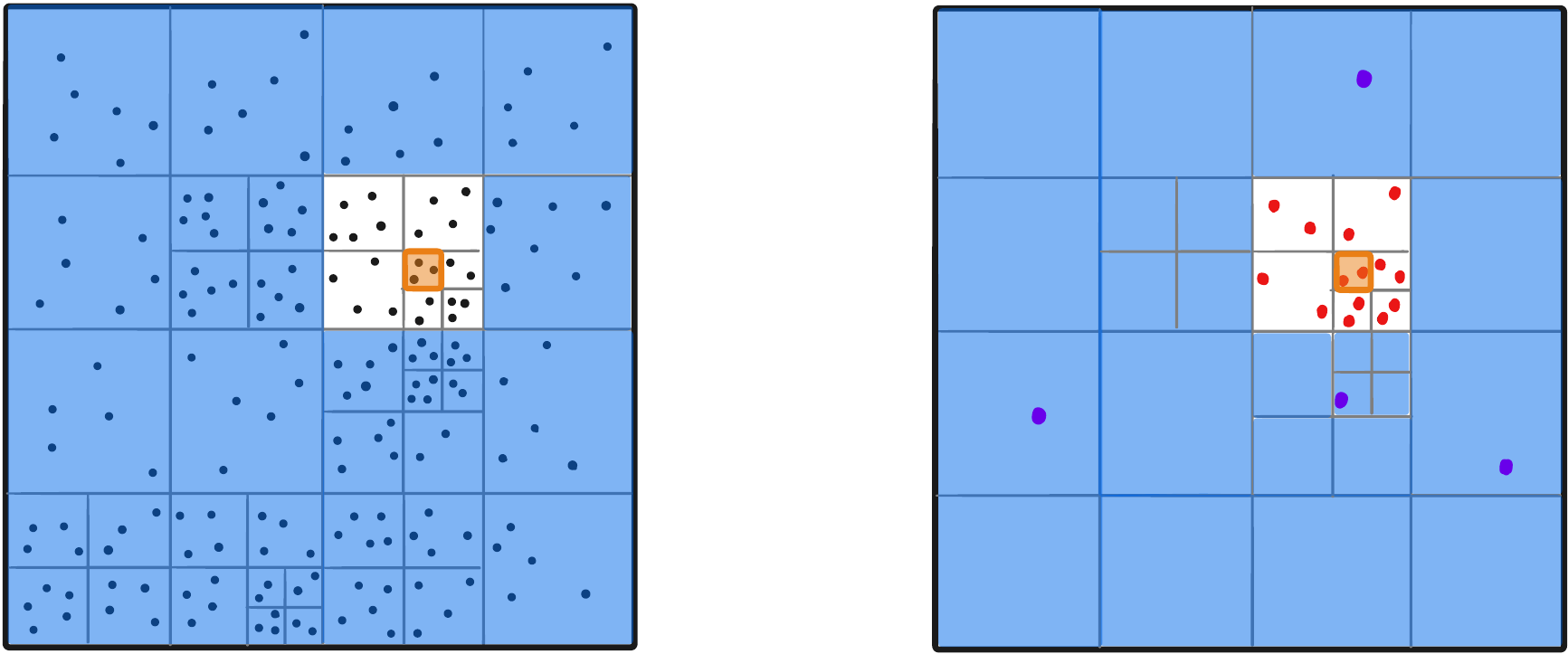}}
    \hspace{0.2in}
    \subfigure[The kernel matrix is shown, highlighting the interaction between points in the orange region and points in its farfield. The arrows point to columns corresponding to the points selected in the $O(1)$ subset. These columns form an approximate basis for the farfield.]{\label{fig:YiYis2}\includegraphics[width=65mm]{./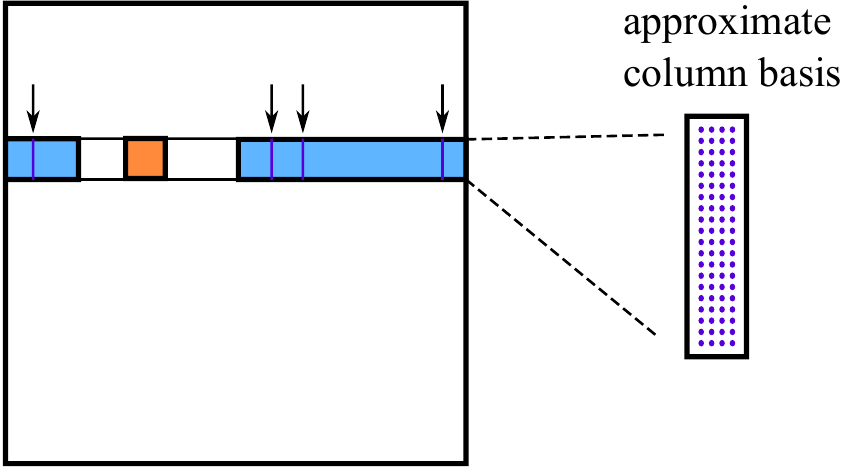}}
    \caption{Farfield data reduction as a way to construct an approximate column basis for the farfield block row in the kernel matrix}
    \label{fig:YiYis}
\end{figure}


%

The rest of the manuscript is organized as follows.
Section \ref{sec:review} reviews hierarchical matrix structures and existing methods for constructing hierarchical matrices.
Section \ref{sec:HiDR} introduces the hierarchical data reduction algorithm.
Based on this hierarchical data reduction,
the complete hierarchical matrix construction is presented in Section \ref{sec:ddH}.
{Section \ref{sec:dr1} presents numerical experiments to investigate different data reduction techniques.
Numerical results for the proposed data-driven hierarchical matrix construction are given in Section \ref{sec:numerical}. 
}
Concluding remarks are drawn in Section \ref{sec:conclusion}.

The notation used in this paper is listed below.
\begin{itemize}
    \item $|x-y|$ denotes the Euclidean distance between $x$ and $y$ in $\mathbb{R}^d$;
    \item diam($X$) denotes the diameter of set $X$, i.e., $\max\limits_{x,y\in X} |x-y|$;
    \item dist($X_1,X_2$) denotes the distance between sets $X_1$ and $X_2$, i.e. $\min\limits_{x\in X_1,y\in X_2} |x-y|$;
    \item card($X$) denotes the cardinality of set $X$.
    \item {$X^*$ denotes a representor set of $X$.}
\end{itemize}

\section{Review of Hierarchical Matrix Representations}
\label{sec:review}

Given a kernel function $\kappa(x,y)$ and a dataset $X=\{x_1,\dots,x_n\}$, the associated kernel matrix is defined by 
\[
    K = [\kappa(x_i,x_j)]_{i,j=1}^n.
\]
Dense kernel matrices are ubiquitous and arise in various applications, where the kernel function measures the interaction between objects.
In Coulombic N-body simulations, $\kappa(x,y)=\frac{1}{|x-y|}$.
In boundary integral equations, $\kappa(x,y)$ can take very different forms, including $\Phi(x,y)$, $\nabla_{v_y} \Phi(x,y)$, where $\Phi(x,y)$ denotes the fundamental solution of the underlying differential operator and $v_y$ denotes the unit outer normal at $y$ on the boundary. {In certain structured matrix computations such as those involving Cauchy and Cauchy-like matrices, $\kappa(x,y)$ is taken as $\frac{1}{x-y}$ with $x,y\in\mathbb{C}$.}
In statistics and machine learning, $\kappa(x,y)$ is often taken as the Gaussian kernel $e^{-|x-y|^2}$ or the Laplace kernel $e^{-|x-y|}$.
A major computational bottleneck in these applications lies in the $O(n^2)$ cost in storing the dense matrix $K$ and performing operations such as matrix-vector multiplication.

To avoid the high cost in forming $K$ explicitly, hierarchically low-rank matrix representations are used to approximate $K$.
The hierarchical representation is based on the fact that $K$ can be partitioned into blocks in a hierarchical fashion and many blocks are numerically low rank.
Low-rank factors are computed for the blocks and are stored in the hierarchical representation to replace original dense blocks.
Two widely used hierarchical representations are $\hmat$ and $\hmat^2$ \cite{h2lec,hackintroh2app,hackhss2002,hack2015book}.
The $\hmat$ matrix representation in general has a complexity of $O(n\log n)$ in space while $\hmat^2$ has the optimal complexity of $O(n)$.
The matrix-vector multiplication can be computed in $O(n\log n)$ complexity for $\hmat$ matrices and in $O(n)$ complexity for $\hmat^2$ matrices, which is much more efficient than directly multiplying $K$ by a vector.

We review the mathematical description of hierarchical matrices in Section \ref{sub:Hmat}.
Existing methods for constructing the hierarchical matrices are discussed in Section \ref{sub:methods}.

\subsection{Hierarchical matrix representations} 
\label{sub:Hmat}
In the following, we review the general algebraic framework of $\hmat$ and $\hmat^2$ matrices.

Given a dataset $X=\{x_i\}_{i=1}^n$ in $\mathbb{R}^d$,
one builds a tree structure by recursively partitioning $X$ spatially until no more than {$m=O(1)$ points are contained in each partitioned subset}.
The tree encodes the subsets of $X$ generated by the adaptive partitions.
Namely, the root node is associated with $X$ and its children nodes are associated with subsets of $X$ created by the first partition. 
Inductively, each node is associated with a non-empty subset of $X$.
For node $i$, we denote by $X_i$ the subset associated with that node. Hence $X_{\text{root}}=X$. An illustration is given in Figure \ref{fig:partition}.

\begin{figure}[htbp]
    \centering 
    \includegraphics[scale=.3]{./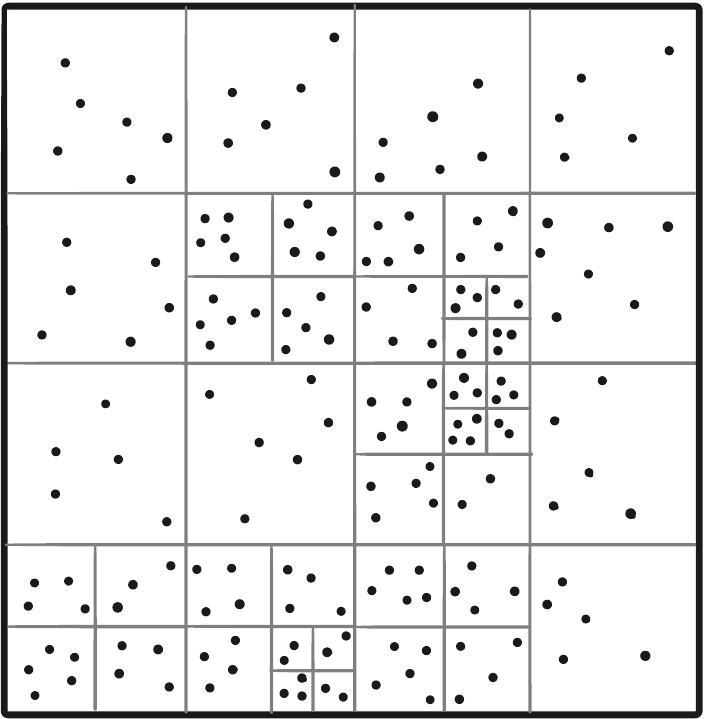} 
    \hspace{0.3in}
    \includegraphics[scale=.3]{./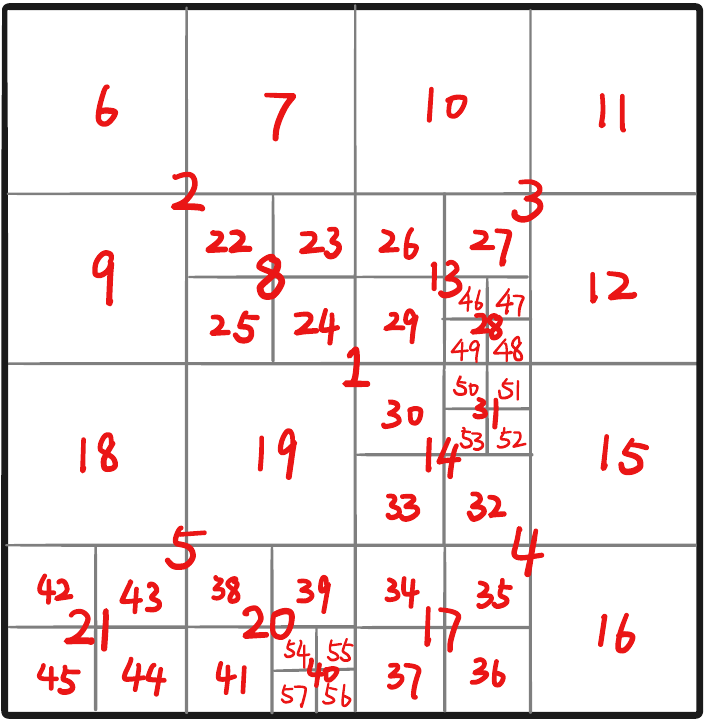} 
    \hspace{0.3in}
    \includegraphics[scale=.3]{./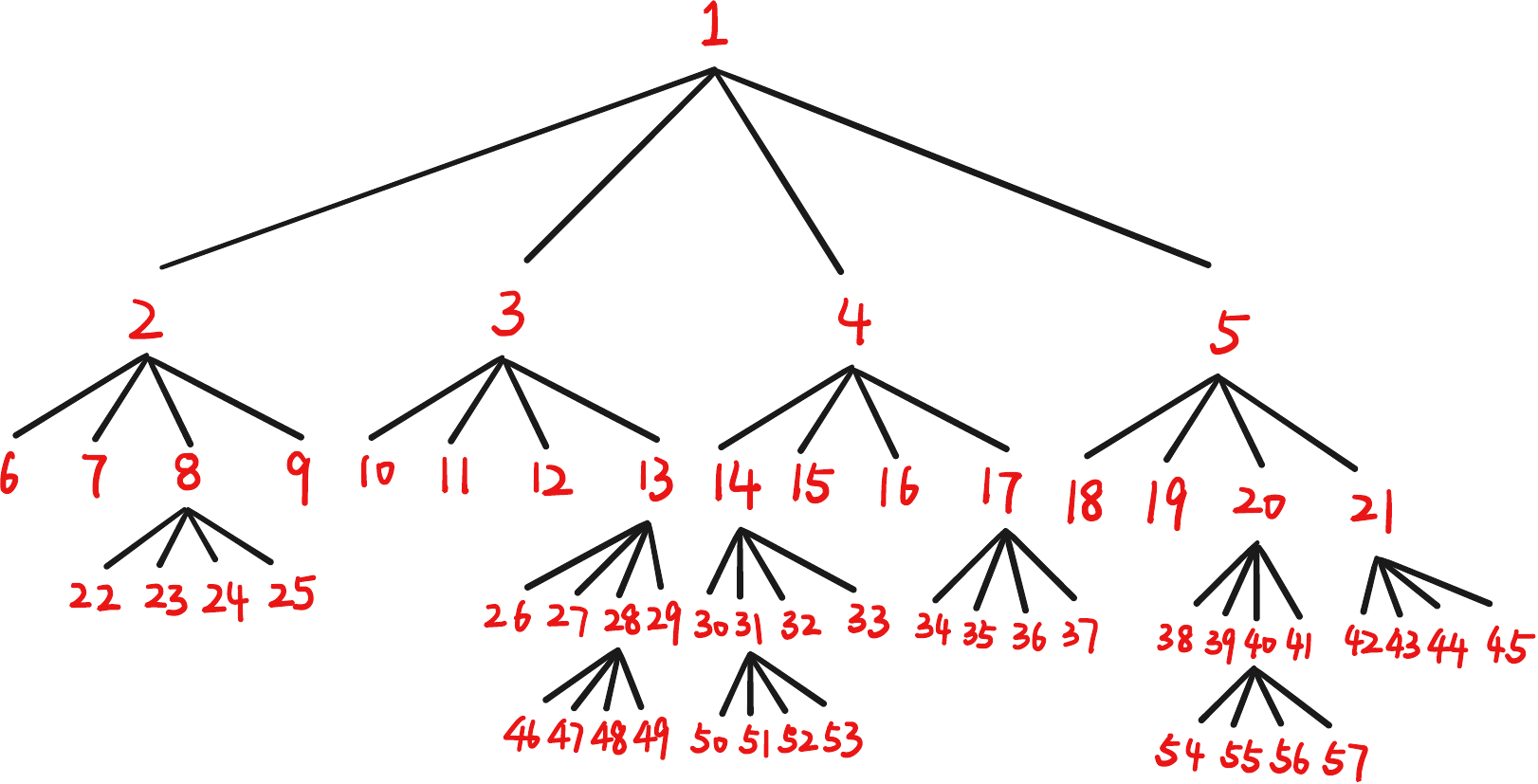} 
    \caption{Adaptive partition of the dataset (left), label (at center) for each subset (middle) and the associated partition tree (right).}
    \label{fig:partition}
\end{figure}

The tree structure automatically yields a blockwise partition of the matrix $K$.
See Figure \ref{fig:hmat1d} for an example with data points lying inside an interval.
We define $K_{i,j} = [\kappa(x,y)]_{\substack{x\in X_i\\ y\in X_j}}$.
Block $K_{i,j}$ is approximated by a low-rank factorization
if $(i,j)$ satisfies an admissiblity condition that requires $X_i$ and $X_j$ to be separated from each other to some extent.
For example, the pair $(i,j)$ is considered admissible if 
\begin{equation}
\label{eq:separation}
\text{diam}(X_i)+\text{diam}(X_j)\leq 2\tau |a_i-a_j|
\end{equation}
for some $\tau\in [0,0.7]$, where $a_i$ denotes the center of the box associated with $X_i$ in the partition.
The admissiblity condition in \eqref{eq:separation} is used in \cite{xiaobaiFMM,smash}.
The parameter $\tau$, often called the \emph{separation ratio} \cite{xiaobaiFMM}, controls how separated the two subsets are. Smaller $\tau$ implies better separation and the value $0.7$ can be replaced by other values less than 1 (but not close to one). {For any admissible $(i,j)$, the corresponding submatrix $K_{i,j}$ is called an \emph{admissible block}.}
An admissible block is also referred as a \emph{farfield} block.
Non-admissible blocks are referred as \emph{nearfield} blocks.
For a subset $X_i\subset X$, the union of all $X_j$ that are separated from $X_i$ in the sense of \eqref{eq:separation} is called the \emph{farfield} of $X_i$.
Points that are not in the farfield of $X_i$ constitute the \emph{nearfield} of $X_i$. 
In hierarchical matrices, each admissible block is approximated by a low rank factorization,
$$K_{i,j}\approx U_i B_{i,j} V_j^T$$
where $U_i$, $V_j$ are column and row basis matrices, and $B_{i,j}$ is called a  \emph{coupling matrix}. 
The maximum column size of all basis matrices $U_i$ and $V_j$ is the approximation rank for $K$.
A larger approximation rank yields a more accurate approximation.
For each node $i$, the \emph{interaction list} of $i$ consists of nodes $j$ such that $X_j$ is well-separated from $X_i$ but for the parent $p$ of $j$, $X_p$ is not well-separated from $X_i$.
The interaction list specifies the blockwise partition of the matrix in which each admissible block has a low-rank approximation.
For example, in Figure \ref{fig:hmat1d}, the interaction list of node $4$ is $\{6,7\}$;
the interaction list of node $10$ is $\{8,12,13\}$;
The number of elements in any interaction list is bounded by the so-called sparsity constant $c_{\text{sp}}$, which is independent of $n$.
The blockwise low-rank representation yields the $\hmat$ matrix structure, which stores all nearfield blocks and low-rank factors $U_i,B_{i,j}, V_j$.
$\hmat$ matrices generally admit $O(n\log n)$ storage complexity.
The more refined $\hmat^2$ structure requires the basis matrices to be \emph{nested}. 
The nested bases property states that, if node $p$ has children $c_1,\dots,c_k$, then there exist \emph{transfer matrices} $R_{c_i}$, $W_{c_i}$ such that 
\[
    U_p = \begin{bmatrix}
        U_{c_1}R_{c_1}\\
        \vdots\\
        U_{c_k}R_{c_k}\\
    \end{bmatrix},\quad 
    V_p = \begin{bmatrix}
        V_{c_1}W_{c_1}\\
        \vdots\\
        V_{c_k}W_{c_k}\\
    \end{bmatrix}.
\]
See Figure \ref{fig:nestedU} for an illustration.
The sizes of the transfer matrices are equal to $r$ if rank-$r$ factorizations are used for approximations to admissible blocks. {In practice, one uses $r=O(1)$ independent of $n$.}
Due to the nested bases property, only basis matrices $U_i$, $V_i$ associated with leaf nodes need to be stored in an $\hmat^2$ representation, in addition to all transfer matrices (whose row and column sizes are $O(1)$) and nearfield blocks. This results in $O(n)$ storage cost.

\begin{figure}[htbp] 
    \centering 
    \includegraphics[scale=.45]{./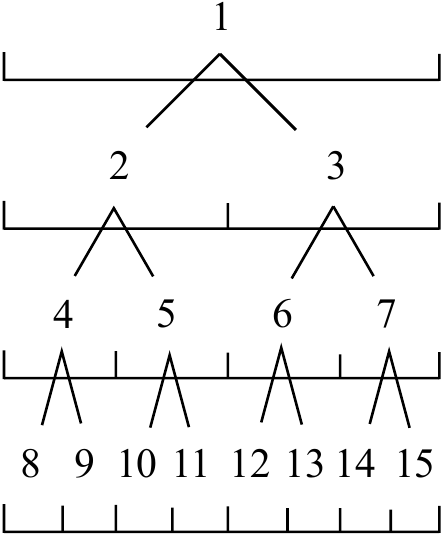}
    \hspace{0.2in}
    \includegraphics[scale=.45]{./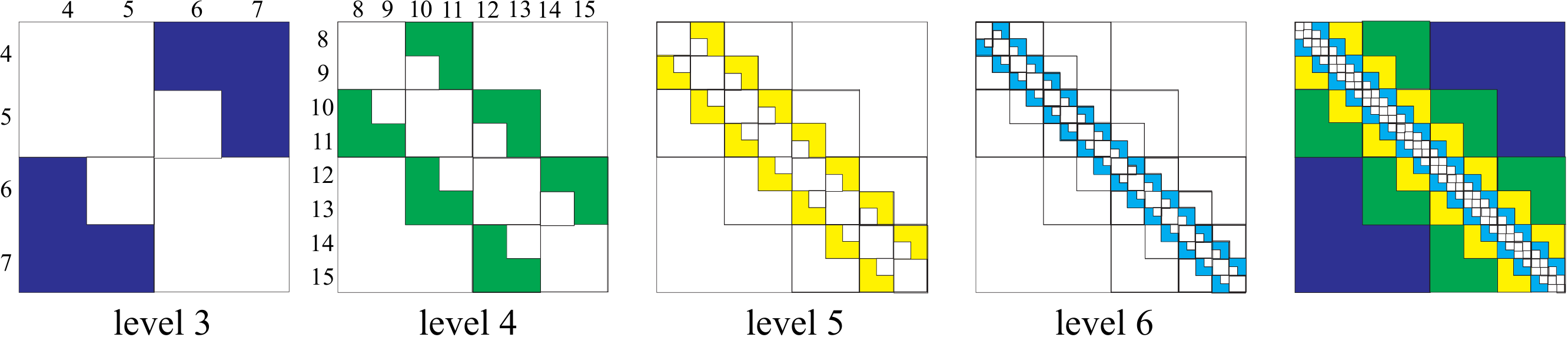}
    \caption{Hierarchical matrix structure for 1D problem (left to right): tree (only top 4 levels are plotted), admissible blocks (colored) at levels 3,4,5,6 and all admissible blocks (colored) in the kernel matrix} 
    \label{fig:hmat1d} 
\end{figure}

\begin{figure}[htbp] 
    \centering 
    \includegraphics[scale=.6]{./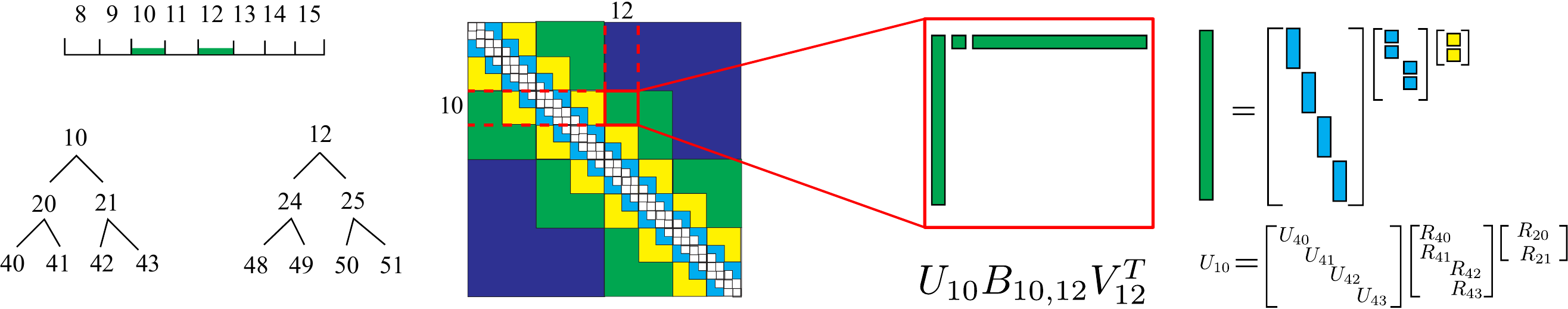}
    \caption{Admissible block (10,12) and nested bases}
    \label{fig:nestedU}
\end{figure}

Note that the above complexity estimates for storing hierarchical representations assume that the low-rank factors have already been computed.
In practice, computing these hierarchical low-rank factors is usually the most costly step, compared to applying the hierarchical representation to a vector.
Extensive research has focused on the efficient computation of the hierarchical representation.
We review several state-of-the-art methods in Section \ref{sub:methods}.


\subsection{General-purpose methods}
\label{sub:methods}
For an $n$-by-$n$ kernel matrix $K$ associated with a general kernel function $\kappa(x,y)$,
the hierarchical matrix representation can be computed in linear or quasilinear time with a variety of techniques, including interpolation \cite{h2lec,hack2015book}, adaptive cross approximation (ACA) \cite{bebendorf2000,bebendorf2012}, hybrid cross approximation (HCA) \cite{HCA2005}, SMASH \cite{smash}, etc.
These methods work for general kernels (for example, non-symmetric, non-translationally-invariant).
For special kernel functions, such as fundamental solutions of certain elliptic partial differential equations (PDEs), layer potentials in integral equations, Coulomb interactions in electrostatics,
analytic methods such as the fast multipole method \cite{rokhlinpotential,rokhlinscattering,GREENGARD1997280,xiaobaiFMM,fmm1d} and its variants \cite{anderson92,Ying04} can be used to construct a hierarchical matrix representation efficiently.

When constructing the hierarchical matrix with \emph{nested bases}, 
one needs to compute basis matrices $U_i$, $V_i$ for submatrices $K_{X_i Y_i}$ that account for the interaction between $X_i$ with $O(1)$ points and its entire farfield $Y_i$ with $O(n)$ points (see Figure \ref{fig:YiYis1}).
In order to achieve $O(n)$ optimal complexity for building the hierarchical representation, each basis matrix $U_i$ (as well as $V_i$) must be computed with $O(1)$ complexity. 
This requires that the matrix $K_{X_i Y_i}$, which is $O(1)$-by-$O(n)$ in size, must not be formed.
For general kernels, a widely used technique to construct a column basis matrix $U_i$ in $O(1)$ complexity is based on interpolation.
For $K_{X_i Y_i}$, interpolating the kernel function $\kappa(x,y)$ at $r$ nodes $Q=\{q_1,\dots,q_r\}$:
$$
    \kappa(x,y)\approx \sum_{i=1}^r \kappa(q_i,y)L_i(x)
$$
yields a rank-$r$ approximation:
\begin{equation}
\label{eq:interpolation}
K_{X_i Y_i} \approx U_i K_{Q Y_i},
\end{equation}
where $L_i$ is the Lagrange polynomial corresponding to node $q_i$ and $U_i = [L_k(x)]_{\substack{x\in X_i\\ k=1:r}}$.
Due to its generality and efficiency in computing $U_i$, this interpolation method is used in a number of general-purpose hierarchical matrix algorithms, e.g. \cite{h2lec,hackintroh2app,HCA2005,smash}.

Another way of finding column basis matrix for $K_{X_i Y_i}$ is through subset selection \cite{bebendorf2012,smashIPDPS}. 
The column basis matrix is chosen as the submatrix corresponding to a judiciously chosen $O(1)$ subset $Y_i^*$ from $Y_i$.
Similar to interpolation, subset selection can be applied to general kernel functions.
The reference \cite{bebendorf2012} presents an efficient hierarchical scheme to select representor sets for all nodes using two steps: top-down and bottom-up.
The cost of the algorithm in \cite{bebendorf2012} is dominated by computing farfield representor sets in the top-down step.
Computing representor sets for all nodes $i$ leads to $O(n\log n)$ complexity for a balanced tree with $O(\log n)$ levels.
The total complexity of the resulting hierarchical matrix construction is $O(n\log n)$, instead of the optimal $O(n)$ complexity achieved by interpolation-based methods.
However, compared to using interpolation nodes (which are generally \emph{outside} the given dataset), selecting subsets directly from the dataset is more memory-efficient for low-rank approximation.
See Section \ref{sec:numerical} for a detailed discussion.

\section{Fast Hierarchical Data Reduction (HiDR)}
\label{sec:HiDR}

To facilitate the fast construction of hierarchically low rank representations, we propose an efficient preprocessing scheme to reduce the tree-structured data so that each node in the partition tree induces $O(1)$  cost in the subsequent hierarchical matrix construction process. Specifically, let $X_i$ be the set of points corresponding to node $i$ and $Y_i$ be the farfield of $X_i$. The data reduction aims to find representor sets $X_i^*\subset X_i$ with $O(1)$ points and $Y_i^*\subset Y_i$ with $O(1)$ points for each node $i$. 
Note that a naive data reduction for $Y_i$ with $O(n)$ points {into a subset of evenly spaced points} as shown in Figure \ref{fig:YiYis} will lead to $O(n)$ computational complexity. 
The cost can be reduced to $O(1)$ with a carefully designed hierarchical procedure presented in Section \ref{sub:HiDR}.


We present the hierarchical data reduction algorithm in Section \ref{sub:HiDR} and verify that it scales linearly with the size of the data in Section \ref{sub:Complexity}. Several algorithms for performing {data reduction} are discussed in Section \ref{sub:DR}.

\subsection{Linear complexity hierarchical data reduction}
\label{sub:HiDR}
The fast HiDR consists of two traversals of the tree: bottom-up and top-down. The $O(1)$ representor set $X_i^*$ for $X_i$ is computed in the bottom-up pass and the $O(1)$ farfield representor set $Y_i^*$ for $Y_i$ is computed in the top-down pass. 
{A building block for the hierarchical scheme is a {\tt DataReduct} subroutine that takes the form:
\begin{equation*}
   {\tt DataReduct}(X,k)\to X^*,  
\end{equation*}
where $X^*$ is a subset (representor set) of the input $X$ and $k$ is a parameter that specifies the size of $X^*$.
There are several options for the subroutine {\tt DataReduct} to obtain $X^*$ from $X$. A detailed discussion is presented in Section \ref{sub:DR}.}
Given a set of points, the subroutine selects a subset of evenly spaced points whose size is bounded by a prescribed constant and scales linearly with the size of the input data. The HiDR is designed such that the input dataset for {\tt DataReduct} is always $O(1)$ in size.


\begin{figure}[htbp]
    \centering 
    \includegraphics[scale=.3]{./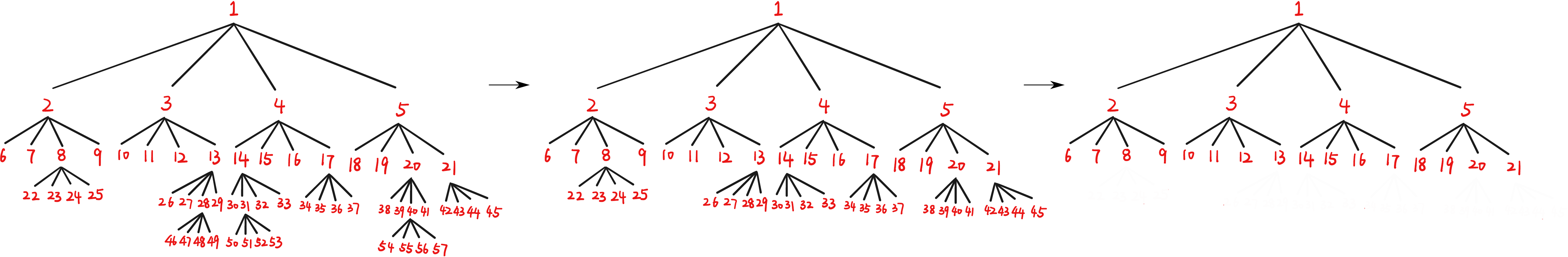} 
    \caption{Bottom-up pass of the tree: {representor sets $X_i^*$ are first computed for nodes $i$ at the deepest level and then upper levels. Nodes with $X_i^*$ computed are wiped out. 
The three trees from left to right correspond to the 1st, 3rd, 5th configurations in Figure \ref{fig:HiDRup}}.}
    \label{fig:treeUp}
\end{figure}

In the bottom-up sweep, starting from leaf nodes $i$, each $X_i$ contains $O(1)$ points and thus the data reduction from $X_i$ to $X_i^*$ induces $O(1)$ cost only.
After children nodes have been processed, we define for each parent $p$ an intermediate set $S_p$ as the union of reduced sets $X_i^*$ for all its children $i$.
The representor set $X_p^*$ is obtained by applying data reduction to the intermediate set $S_p$.
Since each parent has at most $C$ children ($C=2,4,8$ for binary tree, quadtree, octree, respectively) and each $X_i^*$ is $O(1)$ in size, the intermediate set $S_p$ is always $O(1)$ in size.
Thus the cost of computing the representor set for $p$ is always $O(1)$.
Recursively, all non-root nodes can be processed in $O(1)$ complexity in the bottom-up procedure {(see Figure \ref{fig:treeUp})}. Figure \ref{fig:HiDRup} shows the data reduction to generate $X_i^*$ from the lowest level to upper levels of the tree constructed in Figure \ref{fig:partition}  where each $X_i^*$ contains 2 points.

\begin{figure}[htbp] 
    \centering 
    \includegraphics[scale=.3]{./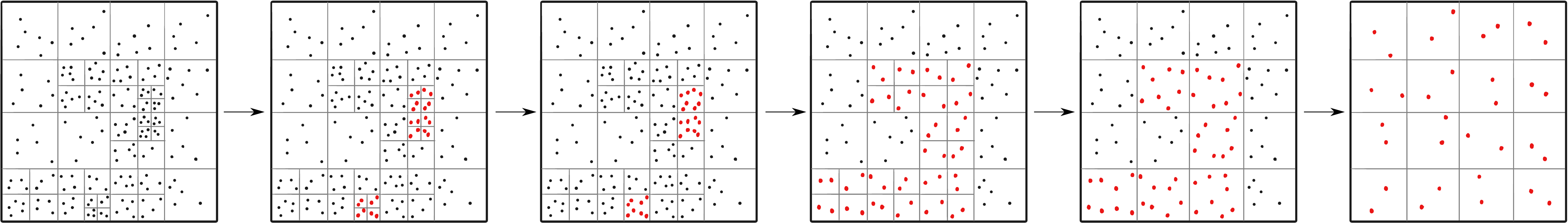} 
    \caption{Fast hierarchical data reduction: bottom-up pass. Each $X_i$ is reduced to $X_i^*$ such that $X_i^*$ contains at most 2 points. {The 1st, 3rd, 5th arrows correspond to the subroutine {\tt DataReduct} for the boxes at the bottom level of the tree, where points in red are output of {\tt DataReduct}. The 2nd, 4th arrows correspond to merging children boxes and going up in the tree.}}
    \label{fig:HiDRup}
\end{figure}

In the top-down sweep, starting from the top nodes $i$ in the tree with non-empty interaction list, we define the intermediate set $T_i$ as the union of $Y_p$ ($p$ denotes the parent of $i$) and $X_j^*$ for all nodes $j$ in the interaction list of $i$.
The representor set $Y_i^*$ for the farfield $Y_i$ of $X_i$ is computed by applying data reduction to the intermediate set $T_i$.
Since the cardinality of the interaction list of $i$ is bounded by the sparsity constant $c_{\text{sp}}$, which is $O(1)$, and each $X_j^*$ is already $O(1)$ in size, we see that $T_i$ only contains $O(1)$ points and consequently computing $Y_i^*$ has $O(1)$ cost.
Once parent nodes have been processed, we define the intermediate set $T_i$ for each child $i$ as the union of all $X_j^*$ from its interaction list and $Y_p^*$ from its parent $p$.
From this definition, $T_i$ is also $O(1)$ in size.
Similar to the above, the representor set $Y_i^*$ is then obtained by applying data reduction to $T_i$. 
Recursively, each reduced farfield representation $Y_i^*$ can be computed in $O(1)$ complexity via the top-down procedure.
Figure \ref{fig:HiDRdown} shows the top-down data reduction for farfield to generate $Y_i^*$ (circled), as $i$ goes from a node near the root node to a leaf node. In Figure \ref{fig:HiDRdown}, each $Y_i^*$ contains at most 4 points only.

\begin{figure}[htbp] 
    \centering 
    \includegraphics[scale=.3]{./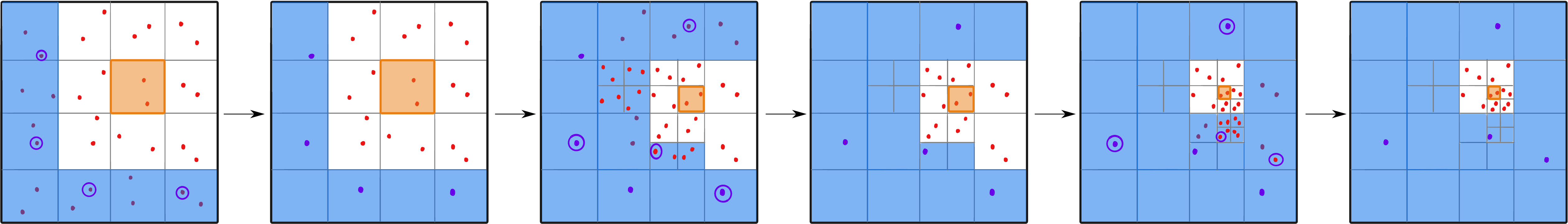} 
    \caption{Fast hierarchical data reduction: top-down pass. For each $X_i^*$ in an orange box, the blue region contains the entire farfield, where each representor set $Y_i^*$ (circled) contains at most 4 points.}
    \label{fig:HiDRdown} 
\end{figure}

The HiDR algorithm is summarized in Algorithm \ref{alg:data}. In practice, the cost of HiDR is  lower than the hierarchical low-rank compression.
HiDR reduces the computational cost of the subsequent hierarchical matrix construction.
\begin{algorithm}[h]
    \caption{\it Hierarchical data reduction (HiDR)}
    \label{alg:data}
    \emph{Input:} The adaptive partition tree $\mathcal{T}$ for dataset $X$, the collection of subsets $X_i$ for all leaf nodes $i$, prescribed maximum size $r_1$ of $X_i^*$ and maximum size $r_2$ of $Y_i^*$ \\
    \emph{Output:} Reduced representations $X_i^*$ and $Y_i^*$ for all nodes $i$
        \begin{algorithmic}[1]
        \FORALL{$i\in\mathcal{T}$}
            \STATE $Y_i^* = \emptyset$, $S_i = \emptyset$
            \IF{$i$ is a leaf node}
                \STATE $S_i = X_i$
            \ENDIF
        \ENDFOR
        \FOR{each level (from bottom to top)}
            \FORALL{$i$ at this level}
                \IF{$i$ is a parent}
                    \STATE $S_i=\bigcup\limits_{c\in \text{ch}(i)} X_c^*$ with ch($i$) the set of children of node $i$
                \ENDIF
                \STATE $X_i^*$ = {\tt DataReduct}($S_i$,$r_1$)
            \ENDFOR
        \ENDFOR
        \FOR{each level (from top to bottom)}
            \FORALL{$i$ at this level}
                \IF{$i$ has non-empty farfield}
                    \STATE $T_i=Y_p^* \bigcup X_j^*$ over all $j$ in the interaction list of $i$ ($p$ denotes the parent of $i$)
                    \STATE $Y_i^*$={\tt DataReduct}($T_i$,$r_2$)
                \ENDIF
            \ENDFOR
        \ENDFOR
        
        \RETURN $X_i^*$, $Y_i^*$ for all nodes $i\in\mathcal{T}$
    \end{algorithmic}
\end{algorithm}

\subsection{Complexity analysis} 
\label{sub:Complexity}
In this section, we show that Algorithm \ref{alg:data} (HiDR) has linear complexity with respect to the number of points in $X$.

\begin{theorem}
\label{thm:HiDRcost}
    For the given dataset $X$ that contains $n$ points, 
    let $\mathcal{T}$ be a partition tree for $X$, in which each leaf node corresponds to a subset of $X$ with $O(1)$ points and the sparsity constant $c_{\text{sp}}=O(1)$.
    Then the complexity of Algorithm \ref{alg:data} is $O(n)$.
\end{theorem}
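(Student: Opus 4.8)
The plan is to count the total work as a sum over all nodes of the partition tree, and show each node contributes only $O(1)$, while the tree has only $O(n)$ nodes. First I would establish the bound on the number of nodes: since each leaf corresponds to a subset of $X$ with $O(1)$ points and the leaf subsets partition $X$, there are $O(n)$ leaves; because the partition tree has branching factor $C$ (a fixed constant $2$, $4$, or $8$ depending on dimension), the number of internal nodes is at most a constant times the number of leaves, so $|\mathcal{T}| = O(n)$ in total. This is the easy structural step.

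Next I would bound the per-node cost in the bottom-up pass. The key inductive claim is that every set fed to {\tt DataReduct} has $O(1)$ size. For a leaf node, $S_i = X_i$ has $O(1)$ points by hypothesis, and {\tt DataReduct} runs in time linear in its input, hence $O(1)$, producing $X_i^*$ of size at most $r_1 = O(1)$. For an internal node $p$, $S_p = \bigcup_{c\in\text{ch}(p)} X_c^*$ is a union of at most $C$ sets each of size $\le r_1$, so $|S_p| \le C r_1 = O(1)$; again {\tt DataReduct} costs $O(1)$ and yields $|X_p^*|\le r_1$. Thus by induction up the tree every node incurs $O(1)$ work in the bottom-up phase, for a total of $O(|\mathcal{T}|) = O(n)$.

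Then I would treat the top-down pass analogously. For each node $i$ with non-empty farfield, the intermediate set is $T_i = Y_p^* \cup \bigcup_{j} X_j^*$ over the interaction list of $i$; since the interaction list has at most $c_{\text{sp}} = O(1)$ entries, each $X_j^*$ has size $\le r_1$, and $Y_p^*$ has size $\le r_2$, we get $|T_i| \le r_2 + c_{\text{sp}} r_1 = O(1)$ (with the base case near the root where the $Y_p^*$ term is simply absent). Hence {\tt DataReduct}$(T_i, r_2)$ costs $O(1)$ and produces $Y_i^*$ of size $\le r_2$, closing the induction downward. Summing over all nodes gives $O(n)$ for this phase as well. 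Finally, the initialization loops over $\mathcal{T}$ (setting $Y_i^* = \emptyset$, $S_i = \emptyset$, and $S_i = X_i$ at leaves) cost $O(1)$ per node plus $O(n)$ total to touch all leaf points, which is $O(n)$. Adding the three phases yields total complexity $O(n)$.

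The only real subtlety — the "main obstacle," though it is mild — is making the inductive size bound airtight: one must verify that the constants do not compound multiplicatively across levels. They do not, because {\tt DataReduct} \emph{caps} its output at $r_1$ (resp. $r_2$) regardless of input size, so the bound $|X_i^*| \le r_1$ holds uniformly at every level rather than growing like $r_1 C^{\text{level}}$; the only place the branching factor $C$ and sparsity constant $c_{\text{sp}}$ enter is in bounding a single intermediate set $S_p$ or $T_i$, and there they appear additively/multiplicatively once, not recursively. I would state this explicitly as a one-line lemma ("for all $i$, $|X_i^*|\le r_1$ and $|Y_i^*|\le r_2$") before doing the cost sum, so that the $O(1)$-per-node claim is self-evident. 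Everything else is routine bookkeeping.
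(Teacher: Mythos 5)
Your proposal is correct and follows essentially the same route as the paper's proof: an inductive $O(1)$ bound on the size of every set fed to {\tt DataReduct} in both the bottom-up and top-down passes, followed by summing $O(1)$ work over the $O(n)$ tree nodes. Your explicit remarks that the output of {\tt DataReduct} is capped at $r_1$ (resp.\ $r_2$) so the constants do not compound across levels, and that the tree has $O(n)$ nodes, are details the paper leaves implicit but are entirely consistent with its argument.
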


\begin{proof}
    We first analyze the complexity in the bottom-up pass.
    Since for each leaf node $i$, $X_i$ contains $O(1)$ points, the {\tt DataReduct} for $X_i$ has $(1)$ cost.
    We next prove by induction that the cost for each nonleaf node is also $O(1)$.
    For a nonleaf node $i$, assume that for each child $c$, 
    $X_c^*$ contains $O(1)$ points.
    Then the intermediate set $S_i$ in Line 10 of Algorithm \ref{alg:data} contains $O(1)$ points because $i$ has $O(1)$ children and $X_c^*$ contains $O(1)$ points for each child $c$.
    Consequently, the cost of {\tt DataReduct} applied to $S_i$ is $O(1)$.
    This shows that the cost to obtain $X_i^*$ is also $O(1)$.
    From the induction, we conclude that the cost for \emph{each} node is $O(1)$ and each reduced subset $X_i^*$ has $O(1)$ points.

    Now we analyze the complexity for the top-down pass.
    First note that each $Y_i^*$ has at most $O(1)$ points according to the construction.
    Consequently, each $T_i$ in Line 18 of Algorithm \ref{alg:data} contains at most $O(1)$ points because the interaction list of $i$ contains at most $c_{\text{sp}}=O(1)$ nodes and each $X_j^*$ has $O(1)$ points.
    This implies that the {\tt DataReduct} of $T_i$ to generate $Y_i^*$ has $O(1)$ complexity.

    Overall, we see that for each node $i$, the associated total cost to compute $X_i^*$ and $Y_i^*$ is $O(1)$.
Since there are $O(n)$ nodes in the tree, the total cost for all nodes is $O(n)$.
This completes the proof of the theorem.
\end{proof}



\subsection{Data reduction methods}
\label{sub:DR}

In this section, we provide several algorithms for performing {data reduction}.
The goal is to select a subset of $X_i$ and of $Y_i$ 
such that the selected subsets preserve the geometry of $X_i$ and $Y_i$.
{For an input set $X$, the subroutine takes the simple form:
{\tt DataReduct}($X$,$k$) $\to$ $X^*$ with $X^*\subset X$ the selected subset whose size is controlled by the parameter $k=O(1)$. Note that {\tt DataReduct} only depends on the input data and is independent of any kernel function.}
It has been shown in \cite{anchornet} that the choice of the subset is essential for the accuracy and robustness of the low rank approximation.
According to the results in \cite{anchornet,ddblock}, 
a subset evenly distributed over the containing set can offer an improved approximation robustness and accuracy over one that is not.
The methods below can be used to generate such a subset efficiently. 
Many of them rely on a reference set with good uniformity, such as a uniform tensor grid.
In addition to a tensor grid, it was shown recently that deep neural networks can be used to generate distributions with good uniformity \cite{deepMC}.

We briefly review some of the existing data reduction methods below. 
An empirical comparison of these methods for the low-rank approximation of kernel matrices is presented in Section \ref{sub:DRtest}.


\paragraph{Farthest point sampling.}
Given $X$ and a target size $k$ for the reduced subset $S$ of $X$, farthest point sampling (FPS) constructs $S$ in a sequential manner.
$S$ is initialized with one point only.
Then FPS searches for a point in $X\backslash S$ that is farthest from $S$ and adds the point to $S$.
This procedure is repeated until $S$ reaches size $k$.
FPS generates evenly distributed subsets and has been widely used in computational geometry \cite{FPS97,FPS06,FPS11}. FPS was recently proposed for computing low rank approximations \cite{ddblock}.

\paragraph{Volume-based data reduction.}
Volume based data reductions choose a subset $S$ of $X$ via a reference grid $Q$ with $O(1)$ points inside the computational domain.
For example, $Q$ can be chosen as a tensor grid (cf. \cite{bebendorf2012}) inside the rectangular domain that encloses the data. 
$S$ is chosen to be the collection of points in $X$ that are closest to each point in $Q$.

\paragraph{Surface-based data reduction.}
Following the same idea as the volume-based method, we can also use a reference set $Q$ based on surfaces constructed from the given data $X$.
In the surface-based method, we define $Q$ as the union of points distributed on surfaces near the boundary of the computational domain.
For example, we can construct ellipsoids centered at the center of a rectangular domain that encloses $X$.
The principal semi-axes of the ellipsoids are chosen to be equal to $\gamma$ times the width of the rectangular domain in each dimension, where $\gamma>0$ is a hyperparameter. 
In Section \ref{sub:DRtest}, we use three ellipsoids with $\gamma=0.3,0.6,1.2$.



\paragraph{Anchor net method.}
The anchor net method \cite{anchornet} is a newly proposed subset selection method based on approximating the geometry of the given dataset with low discrepancy subsets.
For low-rank approximation, the anchor net method is shown to achieve a good time-accuracy trade-off in practice and is particularly efficient for high-dimensional data (cf. \cite{anchornet,ddblock}).

\section{Data-driven Hierarchical Matrix Construction} 
\label{sec:ddH}
In this section, we first show how to extract a low-rank factorization instantly for an admissible block after the prepossessing procedure HiDR in Section \ref{sub:block} and then present an algorithm with $O(n)$ complexity (Algorithm \ref{alg:H2}) for constructing an $\hmat^2$ matrix representation in Section \ref{sub:H2}. We analyze the computational complexity of Algorithm \ref{alg:H2} in Section \ref{sub:H2Complexity}. The proposed method enjoys the following features:
\begin{enumerate}[a)]
    \item black-box general-purpose (kernel independent) construction of the hierarchical low rank format;
    \item optimal $O(n)$ complexity, where $n$ is the number of points in the given dataset;
    \item better efficiency for data from complex geometry compared to general-purpose approaches as well as specialized kernel-dependent techniques.
\end{enumerate}

\begin{algorithm}[h]
    \caption{\it Data-driven (DD) HiDR-based $\hmat^2$ matrix construction}
    \label{alg:H2}
    \emph{Input:} Dataset $X$, kernel function $\kappa(x,y)$, approximation tolerance $\epsilon$, separation ratio $\tau$, maximum number of points $q$ for a leaf node\\
    \emph{Output:} $\hmat^2$ matrix representation
        \begin{algorithmic}[1]
        \STATE Apply adaptive partitioning to $X$ to generate the partition tree $\mathcal{T}$ with at most $q$ points for each leaf node and obtain subsets $X_i$ for all \emph{leaf} nodes $i$
        \STATE Determine approximation parameters $r_1, r_2$ from $\epsilon$
        \STATE Apply HiDR in Algorithm \ref{alg:data} with approximation parameters $r_1,r_2$ to obtain $O(1)$ representor sets $X_i^*$ and $Y_i^*$ for all nodes $i\in\mathcal{T}$
        \FORALL{$i\in\mathcal{T}$}
            \STATE Define $\bar{X}_i^{\text{(row)}}=\bar{X}_i^{\text{(col)}}=\emptyset$
            \IF{$i$ is a leaf node}
                \STATE define $\bar{X}_i^{\text{(row)}}=\bar{X}_i^{\text{(col)}}=X_i$ 
            \ENDIF
        \ENDFOR
        \FORALL{non-root $i\in\mathcal{T}$ from bottom level to top level} 
            \STATE Apply getBasis($K_{\bar{X}_i^{\text{(row)}} Y_i^*}$) to obtain $U_i,\hat{X}_i^{\text{(row)}}$ and getBasis($K_{Y_i^* \bar{X}_i^{\text{(col)}}}^T$) to obtain $V_i,\hat{X}_i^{\text{(col)}}$
            \STATE Update $\bar{X}_p^{\text{(row)}} = \bar{X}_p^{\text{(row)}}\cup \hat{X}_i^{\text{(row)}}$ and $\bar{X}_p^{\text{(col)}} = \bar{X}_p^{\text{(col)}}\cup \hat{X}_i^{\text{(col)}}$, where $p$ is the parent of $i$
        \ENDFOR
        \STATE Define the $\hmat^2$ column and row basis matrices:
    $U=\{U_i\}_{\text{leaf } i}$,
    $V=\{V_i\}_{\text{leaf } i}$,
transfer matrices $R=\{R_i\}, W=\{W_i\}$:
        \[
         \begin{aligned}
             \begin{bmatrix}
                 R_{c_1}\\
                \vdots\\
                R_{c_k}
             \end{bmatrix} = U_i, \quad
             \begin{bmatrix}
                 W_{c_1}\\
                \vdots\\
                W_{c_k}
             \end{bmatrix} = V_i, \quad \text{if $i$ has children $c_1,\dots,c_k$},
         \end{aligned}
        \]
coupling matrices $B=\{B_{i,j}\}$:
\[
            B_{i,j} = 
\begin{cases}
    K_{\hat{X}_i^{\text{(row)}} \hat{X}_j^{\text{(col)}}}, &\text{if $(i,j)$ is admissible}\\
    K_{X_i X_j} & \text{otherwise}
\end{cases}
\]
        \RETURN $\hmat^2$ representation: $U,V,R,W,B$
    \end{algorithmic}
\end{algorithm}

\subsection{Approximating the entire farfield $K_{X_i Y_i}$}
\label{sub:block}
In this section, we show how to derive a low-rank approximation for the entire farfield $K_{X_i Y_i}$ based on the representor sets $X_i^*, Y_i^*$ returned by Algorithm \ref{alg:data}.

When computing an approximate column basis for $K_{X_i Y_i}$, $X_i$ contains $O(1)$ points and its entire farfield $Y_i$ contains $O(n)$ points for the leaf node $i$ (see Figure \ref{fig:YiYis}).
In order for the entire algorithm to have linear complexity, the column basis matrix of $K_{X_i Y_i}$ must be computed in $O(1)$ complexity. 

We first apply strong rank-revealing QR (SRRQR) factorization \cite{rrqr96} to the submatrix $K_{X_i Y_i^*}$:
\begin{equation}
\label{eq:SRRQR1}
    K_{X_i Y_i^*} = P \begin{bmatrix}
        I\\
        G
    \end{bmatrix}
    K_{\hat{X}_i Y_i^*},
\end{equation}
where $P$ is a permutation matrix, $||G||_{\max}$ is bounded by a prescribed constant, and $\hat{X}_i$ is a subset of $X_i$ with $O(1)$ points.
Then the column basis matrix is chosen as $U_i=P[I;G^T]^T$ and the low-rank approximation for the entire farfield reads
\begin{equation}
\label{eq:SRRQR2}
   K_{X_i Y_i}\approx P \begin{bmatrix}
        I\\
        G
    \end{bmatrix}
    K_{\hat{X}_i Y_i}.
\end{equation}

For notational convenience, we denote the procedure in \eqref{eq:SRRQR1}-\eqref{eq:SRRQR2} for computing an approximate column basis for $K_{X_i Y_i}$ by 
\begin{equation}
\label{eq:QRoutput1} 
    \text{getBasis}(K_{X_i Y_i}) = (U_i, \hat{X}_i),
\end{equation}
where $U_i:= P[I;G^T]^T$ is the computed column basis and $\hat{X}_i\subset X_i$. 
Note that the kernel matrix $K_{X_i Y_i}$ is never formed
because the input of ``getBasis" is the kernel function and the subsets $X_i, Y_i^*$.

The cost to obtain $U_i$ and $\hat{X}_i$ from \eqref{eq:QRoutput1} is $O(1)$ as the matrix $K_{X_i Y_i^*}$ is $O(1)$-by-$O(1)$. 
Also notice that 
\begin{equation}
\label{eq:sizeXhat}
    \cardn(\hat{X}_i)\leq \rank{K_{X_i Y_i^*}}\leq \cardn(Y_i^*) = O(1).
\end{equation}
Thus, $\hat{X}_i$ always contains $O(1)$ points.

As we shall see in Section \ref{sec:numerical}, the column basis $U_i$ derived from $K_{X_i Y_i^*}$ can yield better accuracy than analytic methods such as interpolation.

\subsection{Computing hierarchical matrices with nested bases using HiDR} 
\label{sub:H2}
Hierarchical matrices with nested bases, e.g. $\hmat^2$ matrices, can offer optimal $O(n)$ complexity in time and space when approximating an $n$-by-$n$ kernel matrix.
A black-box hierarchical matrix construction proposed in \cite{smash} works for general kernel functions and allows for arbitrary low-rank compression techniques.
In this section, we show that the hierarchical data reduction (HiDR) can be incorporated naturally into the construction of $\hmat^2$ matrix representations via the general framework proposed in SMASH \cite{smash}.
SMASH employs a bottom-up procedure that recursively applies rank-revealing factorization to the initial basis matrix (with $O(1)$ entries) for each node in the tree. 
In \cite{smash}, the initial basis matrices are constructed via either interpolation or analytic expansion of the kernel function.
In this section, we leverage representor sets produced by HiDR in Algorithm \ref{alg:data} to construct the initial basis matrices.

The full data-driven construction is presented in Algorithm \ref{alg:H2}.
The algorithm automatically determines the approximation parameters $r_1,r_2$ for HiDR in Algorithm \ref{alg:data} according to the approximation tolerance $\epsilon$ prescribed by the user.
The idea here is to apply the low-rank approximation in \eqref{eq:SRRQR2} to the artificial kernel matrix $K_{Z_1 Z_2}$ where $Z_1,Z_2\subset \mathbb{R}^d$ are well-separated subsets (in the sense of \eqref{eq:separation}) of $O(1)$ random points.
The parameters $r_1,r_2$ are chosen adaptively by increasing from $r_1=r_2=1$ to a point such that the approximation error to $K_{Z_1 Z_2}$ is smaller than $10^{-2}\epsilon$.
More sophisticated techniques like a posteriori error estimation (cf. \cite{localL2,cai2020equi,cai2020JSC,confusion}) can also be studied to estimate the approximation error.
After the parameters are determined, Algorithm \ref{alg:data} first applies hierarchical data reduction to $X$ associated with tree $\mathcal{T}$ to obtain representor sets. 
Then the hierarchical matrix representation can be computed rapidly by following the SMASH $\hmat^2$ construction and using $K_{X_i Y_i^*}$ as the initial basis matrix for each leaf node $i$.

We perform numerical experiments in Section \ref{sec:numerical} to demonstrate that the new data-driven method improves the matrix approximation accuracy of interpolation-based SMASH algorithm \cite{smash}.
Moreover, the cost of hierarchical data reduction is smaller than the subsequent hierarchical matrix compression.

\subsection{Complexity analysis} 
\label{sub:H2Complexity}

\begin{theorem}
\label{thm:H2Cost}
    For the given dataset $X$ with $n$ points,
let $\mathcal{T}$ be a partition tree for $X$, in which each leaf node corresponds to a subset of $X$ with $O(1)$ points and the sparsity constant $c_{\text{sp}}=O(1)$. 
Then the complexity of Algorithm \ref{alg:H2} is $O(n)$.
\end{theorem}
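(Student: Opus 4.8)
The plan is to bound the cost of Algorithm \ref{alg:H2} line by line, reducing everything to two facts already established: Theorem \ref{thm:HiDRcost} (HiDR runs in $O(n)$), and the $O(1)$-per-node cost of the getBasis operation in \eqref{eq:QRoutput1}, together with the $O(1)$ cardinality bounds \eqref{eq:sizeXhat} on $\hat{X}_i^{\text{(row)}}$ and $\hat{X}_i^{\text{(col)}}$. First I would dispatch the preprocessing: Line 1 (adaptive partitioning with $O(1)$ points per leaf) produces a tree with $O(n)$ nodes and is a standard $O(n)$ or $O(n\log n)$ step; Line 2 determines $r_1,r_2$ by a kernel-approximation test on a fixed-size artificial matrix $K_{Z_1 Z_2}$, which costs $O(1)$ since $r_1,r_2=O(1)$ and depend only on $\epsilon$; Line 3 is exactly Theorem \ref{thm:HiDRcost}, hence $O(n)$, and it guarantees $\cardn(X_i^*),\cardn(Y_i^*)=O(1)$ for every node.

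Next I would handle the main loop (Lines 10--13). The key invariant, proved by induction up the tree exactly as in the proof of Theorem \ref{thm:HiDRcost}, is that $\bar{X}_i^{\text{(row)}}$ and $\bar{X}_i^{\text{(col)}}$ always contain $O(1)$ points: for a leaf this holds because $\bar{X}_i^{\text{(row)}}=\bar{X}_i^{\text{(col)}}=X_i$ has $O(1)$ points by the leaf hypothesis; for a parent $p$ it holds because $\bar{X}_p^{\text{(row)}}$ is the union of $\hat{X}_c^{\text{(row)}}$ over the $O(1)$ children $c$ of $p$, and each $\hat{X}_c^{\text{(row)}}$ has $O(1)$ points by \eqref{eq:sizeXhat}. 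Given this invariant, the matrix $K_{\bar{X}_i^{\text{(row)}} Y_i^*}$ passed to getBasis is $O(1)$-by-$O(1)$, so each getBasis call costs $O(1)$ (SRRQR on a constant-size matrix), as does forming the matrix entries from $\kappa$; the update in Line 13 touches $O(1)$ points. Summing over the $O(n)$ nodes gives $O(n)$ for the whole loop.

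Finally I would account for the output-assembly step (Line 14). Assembling $U=\{U_i\}$, $V=\{V_i\}$ over leaf nodes is $O(1)$ per leaf; extracting the transfer matrices $R_{c},W_{c}$ as the horizontal blocks of $U_i,V_i$ is $O(1)$ per node since these matrices have $O(1)$ rows and columns; the coupling matrices $B_{i,j}$ are formed for $(i,j)$ ranging over interaction lists — there are $O(c_{\text{sp}})=O(1)$ such $j$ per node $i$, hence $O(n)$ pairs total — and each $B_{i,j}$ is either $K_{\hat{X}_i^{\text{(row)}} \hat{X}_j^{\text{(col)}}}$ or a nearfield block $K_{X_i X_j}$, both of constant size under the leaf and sparsity hypotheses, so each costs $O(1)$ to form. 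Adding the contributions $O(n)+O(1)+O(n)+O(n)=O(n)$ completes the proof.

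I expect the only real subtlety — the ``main obstacle'' — to be making the induction on $\cardn(\bar{X}_i^{\text{(row)}}),\cardn(\bar{X}_i^{\text{(col)}})$ fully rigorous, i.e. verifying that the SRRQR-based getBasis genuinely returns a selected subset $\hat{X}_i$ whose size is controlled by $\cardn(Y_i^*)=O(1)$ rather than by $\cardn(\bar{X}_i^{\text{(row)}})$; this is precisely what \eqref{eq:SRRQR1} and \eqref{eq:sizeXhat} provide, so the argument goes through, but it is the place where one must be careful that ``$O(1)$'' constants do not silently accumulate a dependence on the level or on $n$. Everything else is bookkeeping over a tree with $O(n)$ nodes and $O(1)$ work at each.
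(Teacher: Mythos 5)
Your proof is correct and follows essentially the same route as the paper's: appeal to Theorem \ref{thm:HiDRcost} for the HiDR step, then the induction showing $\cardn(\bar{X}_i^{\text{(row)}}),\cardn(\bar{X}_i^{\text{(col)}})=O(1)$ via \eqref{eq:sizeXhat}, so that each getBasis call acts on an $O(1)$-by-$O(1)$ matrix and the per-node cost of Lines 11--12 is $O(1)$. You are in fact slightly more thorough than the paper, which silently skips Lines 1, 2 and 14; the only caveat is that your hedge that the partitioning in Line 1 may cost $O(n\log n)$ would, taken literally, weaken the stated $O(n)$ bound --- the paper sidesteps this by taking the tree $\mathcal{T}$ as a hypothesis of the theorem rather than counting its construction.
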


\begin{proof}
Algorithm \ref{alg:H2} follows the $\hmat^2$ construction in SMASH \cite{smash} with an additional hierarchical data reduction (HiDR) in Line 3.
According to Theorem \ref{thm:HiDRcost}, HiDR has $O(n)$ complexity.
Thus to prove the $O(n)$ complexity of Algorithm \ref{alg:H2}, it suffices to show that the cost per node is $O(1)$ in Lines 11--12.

We first show that $\bar{X}_i^{\text{(row)}}$ and $\bar{X}_i^{\text{(col)}}$ contain at most $O(1)$ points.
If $i$ is a leaf node, then according to the definition in Line 7, 
$\bar{X}_i^{\text{(row)}} = \bar{X}_i^{\text{(col)}}= X_i$ contain $O(1)$ points. 
If $i$ is a parent node, then after all children of $i$ have been updated, $\bar{X}_i^{\text{(row)}}$ and $\bar{X}_i^{\text{(col)}}$ can be written as 
\[
    \bar{X}_i^{\text{(row)}}=\bigcup\limits_{c \text{ is a child of } i} \hat{X}_c^{\text{(row)}},\quad 
    \bar{X}_i^{\text{(col)}}=\bigcup\limits_{c \text{ is a child of } i} \hat{X}_c^{\text{(col)}}.
\]
Since the number of children for every node is bounded from above by a constant, and every subset $\hat{X}_c$ contains $O(1)$ points according to \eqref{eq:sizeXhat},
we see that $\bar{X}_i^{\text{(row)}}$ and $\bar{X}_i^{\text{(col)}}$ contain $O(1)$ points.
This implies that the complexity in Line 12 is at most $O(1)$ for all $i$.

Next we analyze the complexity in Line 11.
Since $Y_i^*$ contains $O(1)$ points only, it follows that in Line 11, the input matrices $K_{\bar{X}_i^{\text{(row)}} Y_i^*}$ and $K_{Y_i^*\bar{X}_i^{\text{(col)}}}^T$ have $O(1)$ rows and columns.
Consequently, performing ``getBasis" in Line 11 only takes $O(1)$ time.

Now we conclude that the total complexity in Lines 11--12 is $O(1)$.
Therefore, the total complexity for Algorithm \ref{alg:H2} is $O(n)$.
\end{proof}

\section{Data Reduction and Low Rank Approximation} 
\label{sec:dr1}
{
In this section, we investigate different data reduction techniques for low rank approximation.
Section \ref{sub:failure of virtual points} presents an example to reveal a drawback of methods that rely on points outside the given dataset (such as interpolation nodes or random points) for computing the low-rank approximation to kernel matrices.
In Section \ref{sub:DRtest}, we compare the performance of the data reduction methods of Section \ref{sub:DR} for low-rank approximation. 
}

\subsection{Drawback of using points outside the given data for irregular datasets}
\label{sub:failure of virtual points}

%

Interpolation nodes, random points, or in general points outside the given dataset are commonly used in low-rank approximation to obtain approximate column basis efficiently without forming the original kernel matrix.
Since these points are created artificially (not part of the given data), we call these points \emph{virtual points}.
For given data $X$ and $Y$, virtual points are constructed in rectangular domains 
$\Omega_X$ and $\Omega_Y$ that cover $X$ and $Y$, respectively, and the kernel matrix associated with these virtual points is used to compute the low-rank approximation.

{One issue of of using virtual points is that it may lead to an incorrect approximation to the kernel matrix.
This is because the virtual points lie outside the original data, and the kernel matrix involving these virtual points may have a very different spectrum from that of the kernel matrix to be approximated.
Thus methods using virtual points may not be robust for approximating general low-rank kernel matrices.}
To illustrate the issue, we use the ``nJ'' dataset as illustrated in Figure \ref{fig:2Dset2Data}, where $X$ contains 120 points in $\Omega_X=[-2.5,2.5]\times [0,3.75]$ and $Y$ contains 150 points in $\Omega_Y=[0,5.13]\times [16.25,23.84]$.
We consider the smooth kernel function
\begin{equation*}
 \kappa(x,y) = \sqrt{1+100|x-y+a|^2}
 \end{equation*}
with $a=[0,20]^T$.
{The same issue also arises for other kernels such as Gaussians.}
The corresponding kernel matrix $K_{XY}$ (120-by-150) has rapidly decaying singular values as shown in Figure \ref{fig:2Dset2svd} (dashed line).
Consequently, $K_{XY}$ can be approximated very well by a low-rank matrix.

Now consider $K_{X_1 Y_1}$, with $X_1$ being 2000 random points
selected in $\Omega_X$ and $Y_1$ being 2000 random points
selected in $\Omega_Y$.  The singular values of $K_{X_1 Y_1}$ are an
approximation to the continuous singular values of the problem in \eqref{eq:T}. The computed singular values of 
$K_{X_1 Y_1}$ are also plotted in Figure \ref{fig:2Dset2svd}.
{It can be seen that the singular values of $K_{X_1 Y_1}$ do not decay rapidly, compared to $K_{XY}$.}

Now consider $K_{X_2 Y_2}$, where $X_2$ and $Y_2$ are 
$10 \times 10$ Chebyshev points in $\Omega_X$ and $\Omega_Y$, respectively.
Like $K_{X_1 Y_1}$, this matrix does not have singular values that decay as rapidly as those of $K_{XY}$, and therefore an algebraic compression of $K_{X_2 Y_2}$ will not be an effective approximation for $K_{XY}$.

\begin{figure}[htbp]
    \centering
    \subfigure[Dataset $X$, $Y$]{\label{fig:2Dset2Data}\includegraphics[width=25mm]{./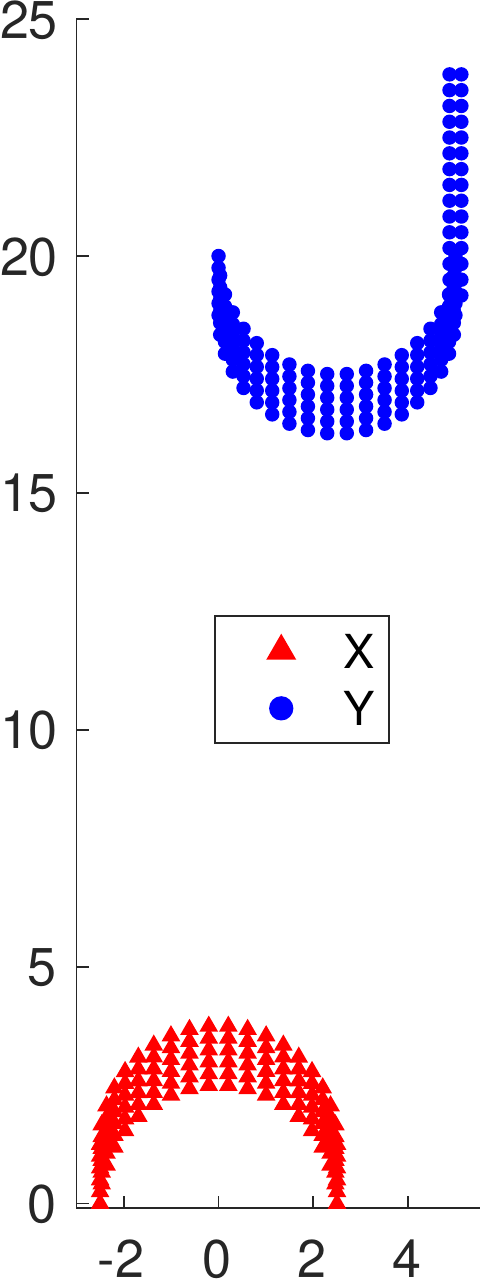}}
     \hspace{2mm}
    \subfigure[2000 random points in $\Omega_X$, $\Omega_Y$]{\label{fig:2Dset2PPT}\includegraphics[width=25mm]{./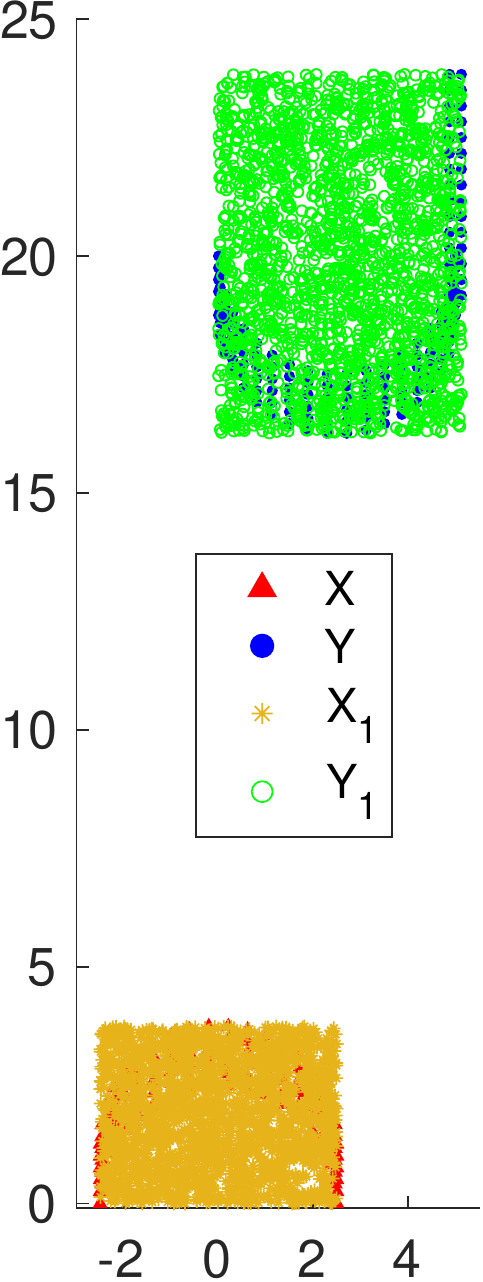}}
     \hspace{2mm}
    \subfigure[$10 \times 10$ Chebyshev points in $\Omega_X$, $\Omega_Y$]{\label{fig:2Dset2Cheb}\includegraphics[width=25mm]{./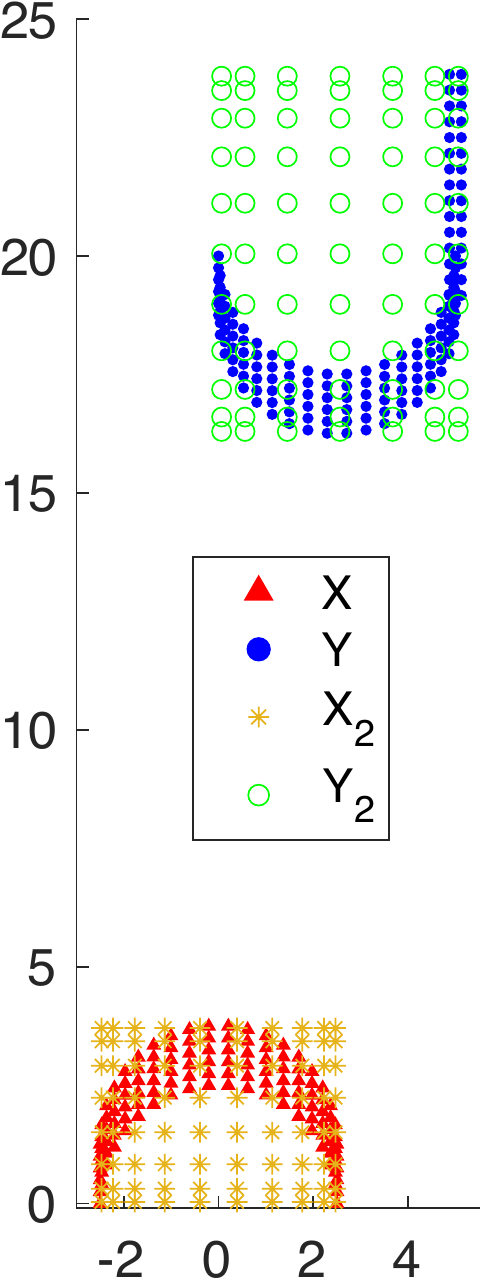}}
    \caption{Dataset $X,Y$ and different types of ``virtual points" within 
$\Omega_X$ and $\Omega_Y$.}
    \label{fig:2Dset2}
\end{figure}

\begin{figure}[htbp]
    \centering
    \includegraphics[width=50mm]{./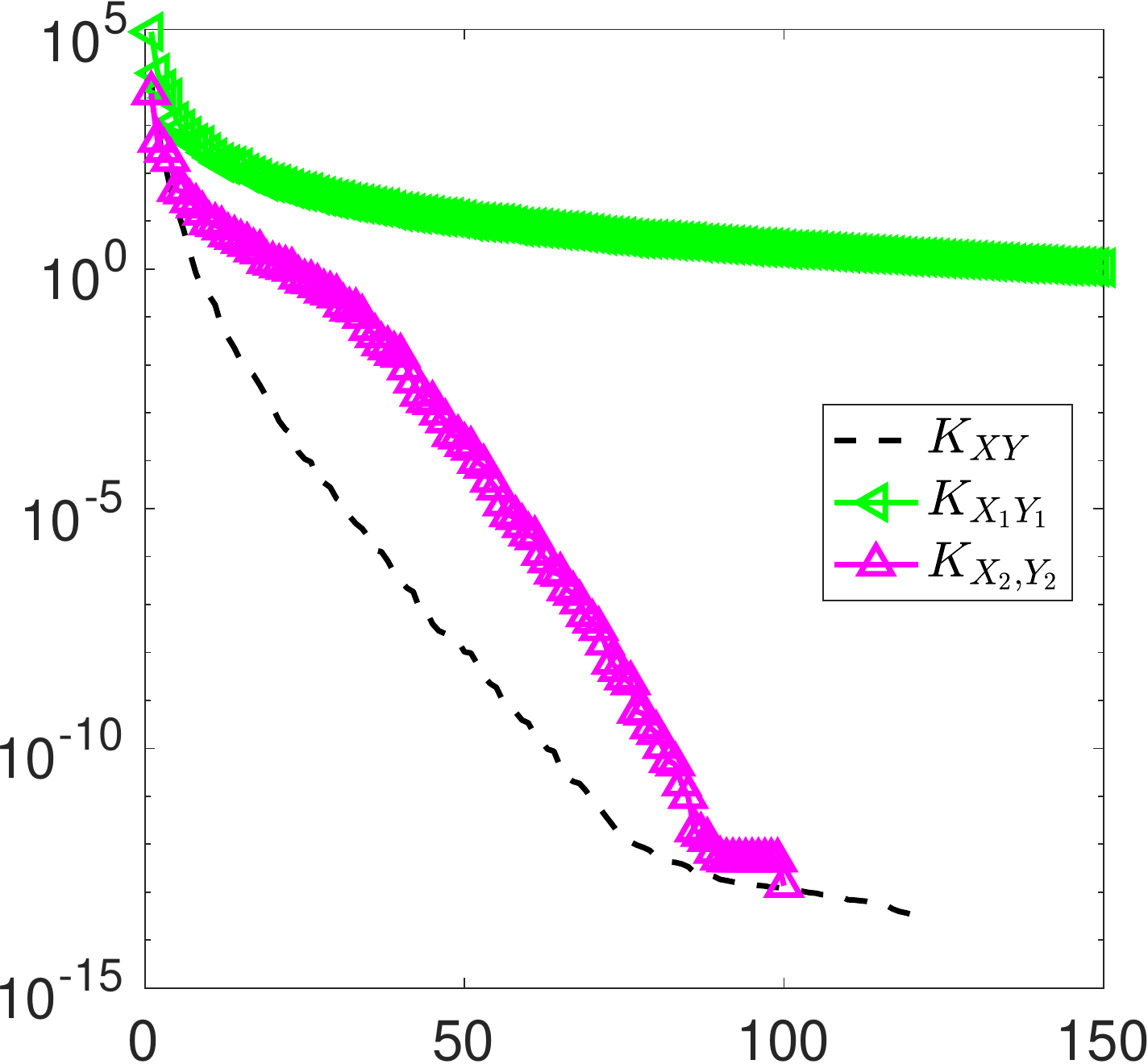}
    \caption{Distinct singular value patterns for kernel matrices with original data $X\times Y$ and virtual points $X_i\times Y_i$ ($i=1,2$) of Figure \ref{fig:2Dset2}: singular values of
$K_{XY}$ (120-by-150), 
$K_{X_1 Y_1}$ (2000-by-2000),
$K_{X_2 Y_2}$ (100-by-100).
For $K_{X_1 Y_1}$, the largest 150 singular values are plotted.}
    \label{fig:2Dset2svd}
\end{figure}



\paragraph{Mathematical explanation.}
Employing a continuous treatment of the matrix approximation problem ignores the geometry of the discrete dataset.
This can be problematic in general as the continuous problem may have entirely different spectral properties compared to the matrix.
It is even possible that the kernel function is \emph{undefined} at virtual points.
For the model problem in Figure \ref{fig:2Dset2}, 
the matrix $K_{X_1 Y_1}$ with virtual points $X_1, Y_1$ is related to the following integral operator:
\begin{equation}
\label{eq:T}
    T: L^2(\Omega_X):\to L^2(\Omega_Y),\quad (Tf)(x) := \int_{\Omega_X} \kappa(x,y) f(y) dy.
\end{equation}
The singular values of $K_{X_1 Y_1}$ (with $X_1$ and $Y_1$ chosen as 
described above) approximate the singular values (up to a scaling constant)
of the integral operator \cite{atkinson1967eig,keller1965eig,spence1978eig,osborn1975,atkinson1975,eigCMAM}.
These singular values do not decay rapidly like those of $K_{XY}$.
In essence, we see that virtual points methods treat the matrix approximation as a \emph{continuous} problem and thus ignore the geometry of the discrete data. 
When the continuous problem differs substantially from the original \emph{discrete} problem (kernel matrix approximation), the performance of {methods that utilize virtual points} can be very unsatisfactory.


%
%

%




\subsection{Comparison of data reduction methods} 
\label{sub:DRtest}
In this section, we perform experiments to compare the performance of the four data reduction methods in Section \ref{sub:DR}: farthest point sampling (`FPS'), volume-based reduction (`Volume'), surface-based reduction (`Surface'), anchor net method (`AnchorNet').
{These methods operate on the dataset and do not require any kernel function.}

\paragraph{Experiment setup}
We consider low-rank approximation to the kernel matrix $K_{XY}$, where $X$ (198 points) and $Y$ (1577 points) are well-separated subsets from a dinosaur manifold as shown in Figure \ref{fig:DinoXY}.
The diameter of $X$ is 58.21, and the distance between $X$ and $Y$ is 21.275.
We test three different kernel matrices $K_{XY}$ corresponding to the kernel functions below:
$$\frac{1}{|x-y|},\quad e^{-\frac{|x-y|^2}{900}},\quad |x-y|^{11}.$$
To obtain the low-rank approximation, we first perform data reduction for $Y$, and then build the factorization as described in Section \ref{sub:block} using \eqref{eq:SRRQR1} and \eqref{eq:SRRQR2}.
The low-rank approximation error is measured by the relative matrix approximation error in the 2-norm.

The error plots for three different kernels are shown in Figure \ref{fig:DRerrors}.
For each plot, the horizontal axis denotes the number of points selected by the data reduction method, namely the size of the subset $Y^*\subset Y$.
Each curve shows how the low-rank approximation error for a specific data reduction method decays as we increase the size of $Y^*$.
We see that `Volume' and `AnchorNet' offer the best performance and are almost indistinguishable {from each other in performance} across all three kernels.
`Surface' achieves similar performance for the first two kernels but is slightly worse than `Volume' and `AnchorNet' for the third kernel.
The farthest point sampling `FPS' performs well but is not as accurate as the other three methods for the same number of selected points for all kernels tested.

\begin{figure}[htbp] 
    \centering 
    \includegraphics[scale=.33]{./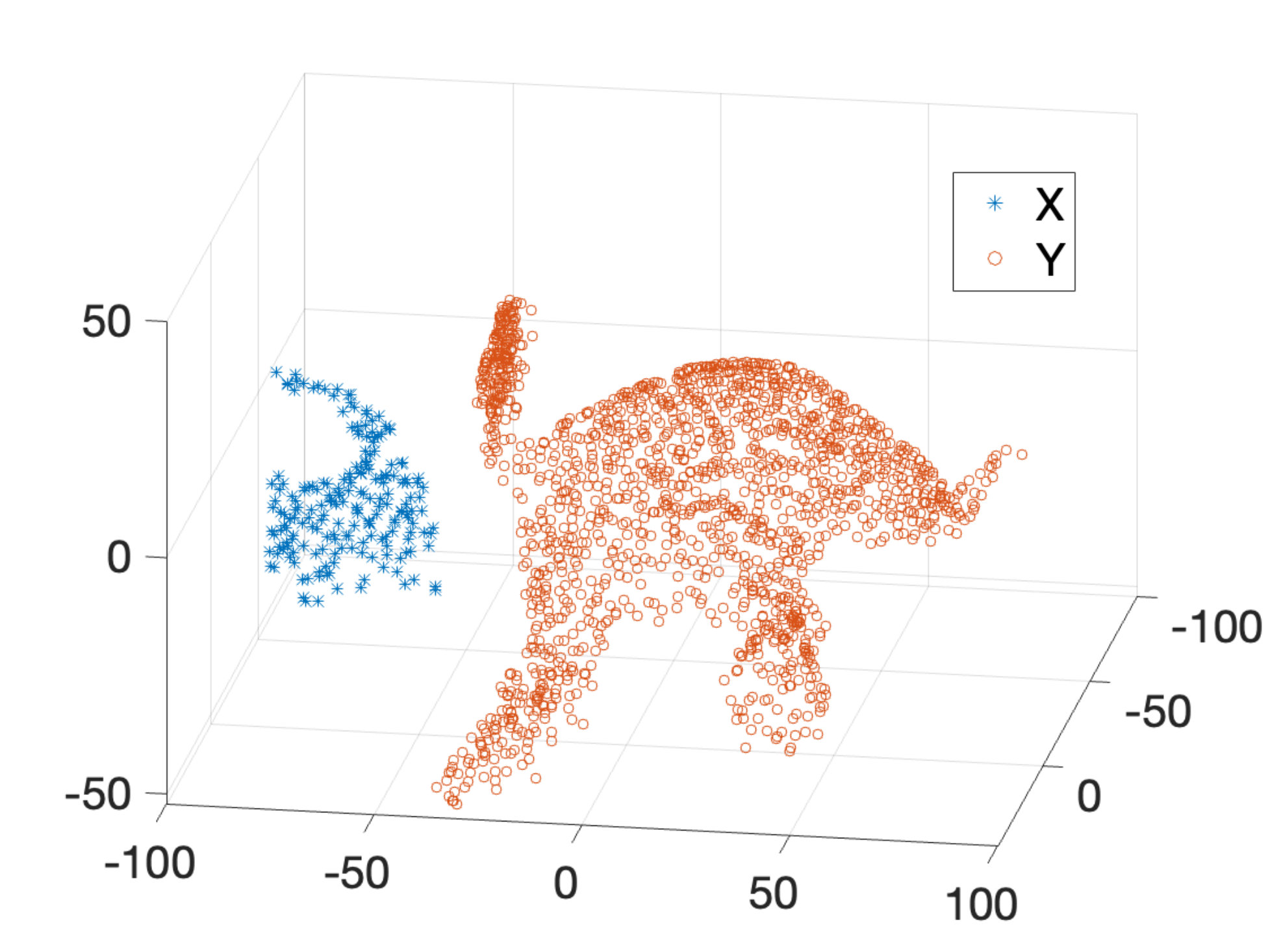} 
    \caption{Section \ref{sub:DRtest} dataset: $X$ and $Y$ well-separated}
    \label{fig:DinoXY}
\end{figure}

\begin{figure}[htbp] 
    \centering 
    \includegraphics[scale=.27]{./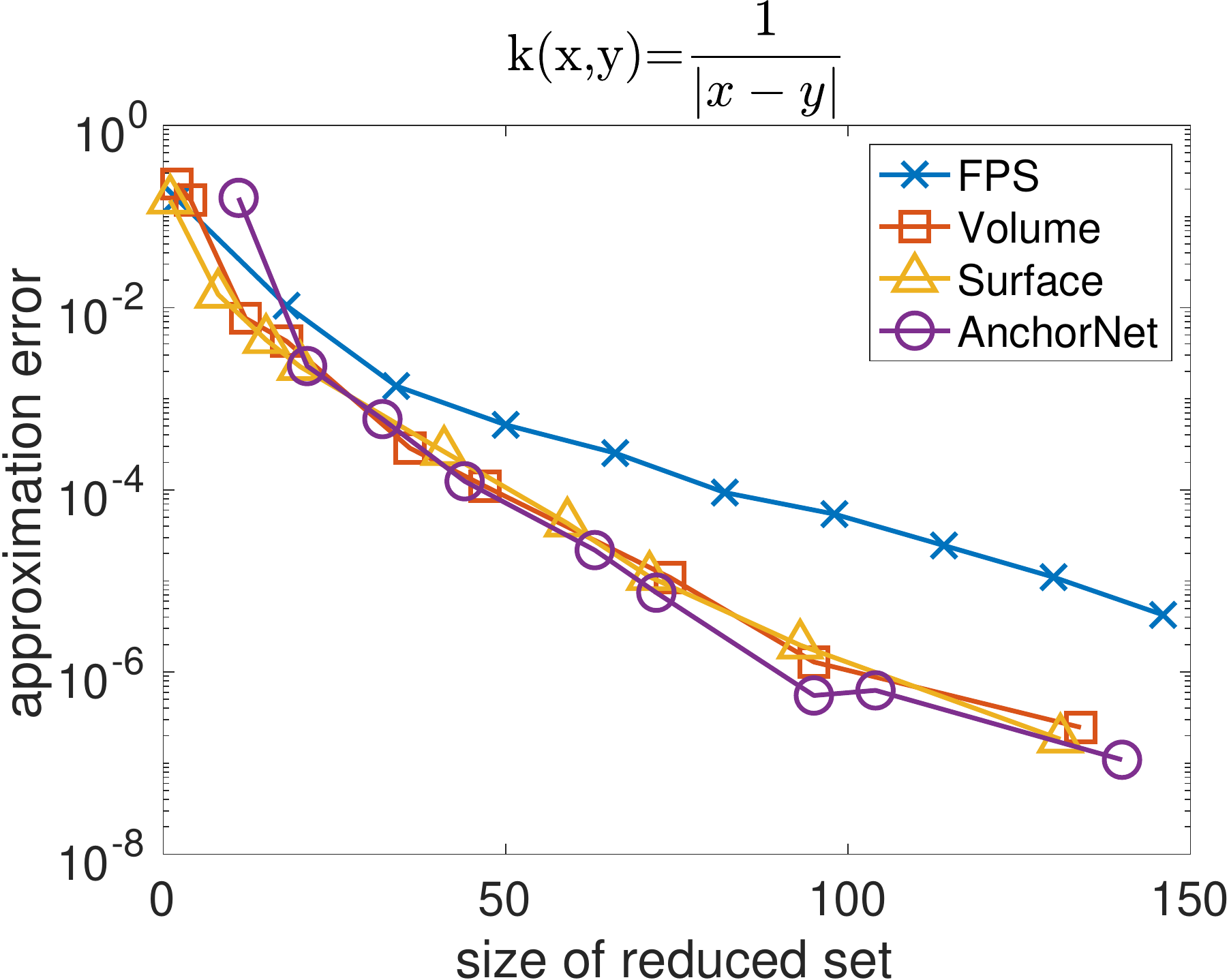}
    \includegraphics[scale=.27]{./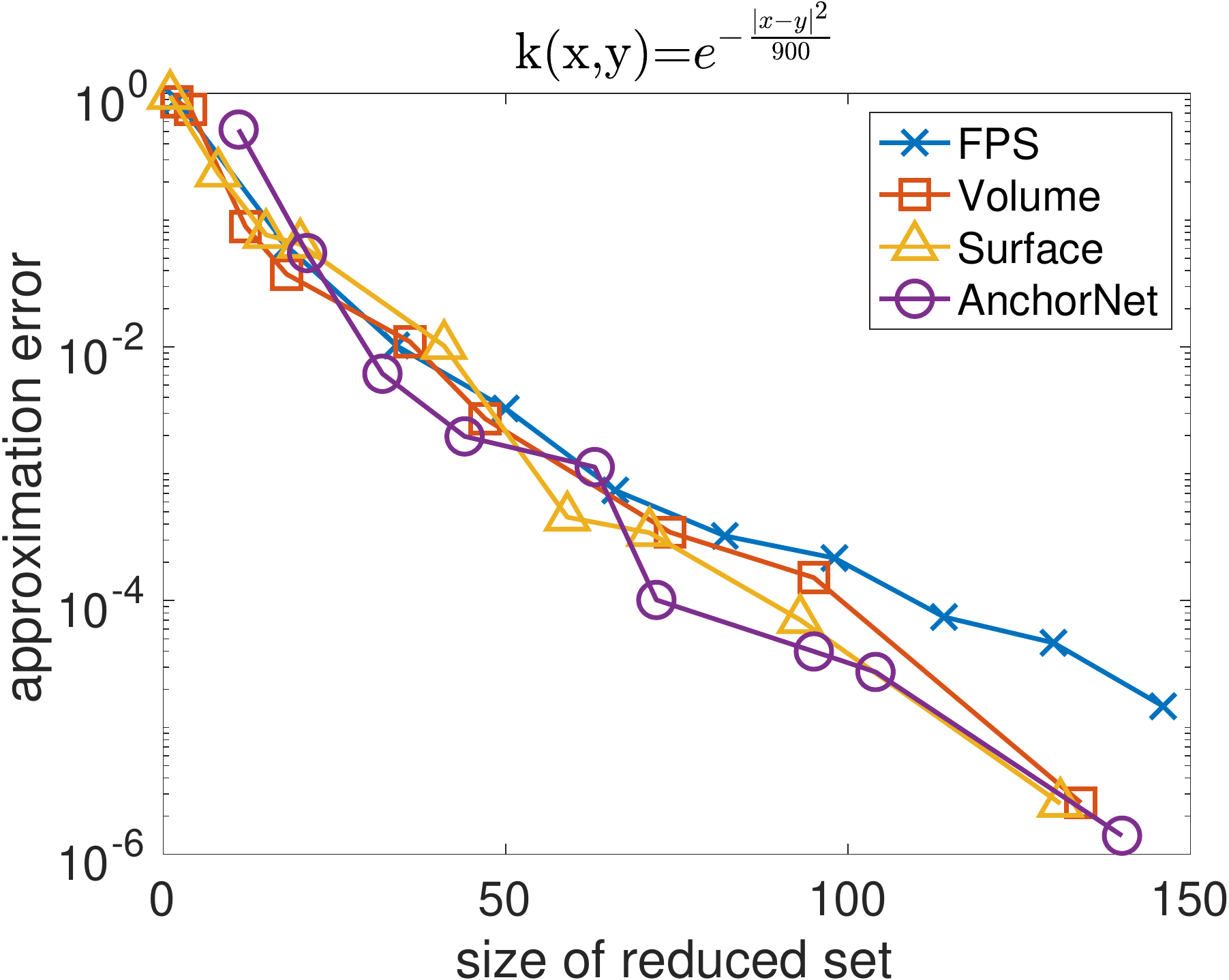}
    \includegraphics[scale=.27]{./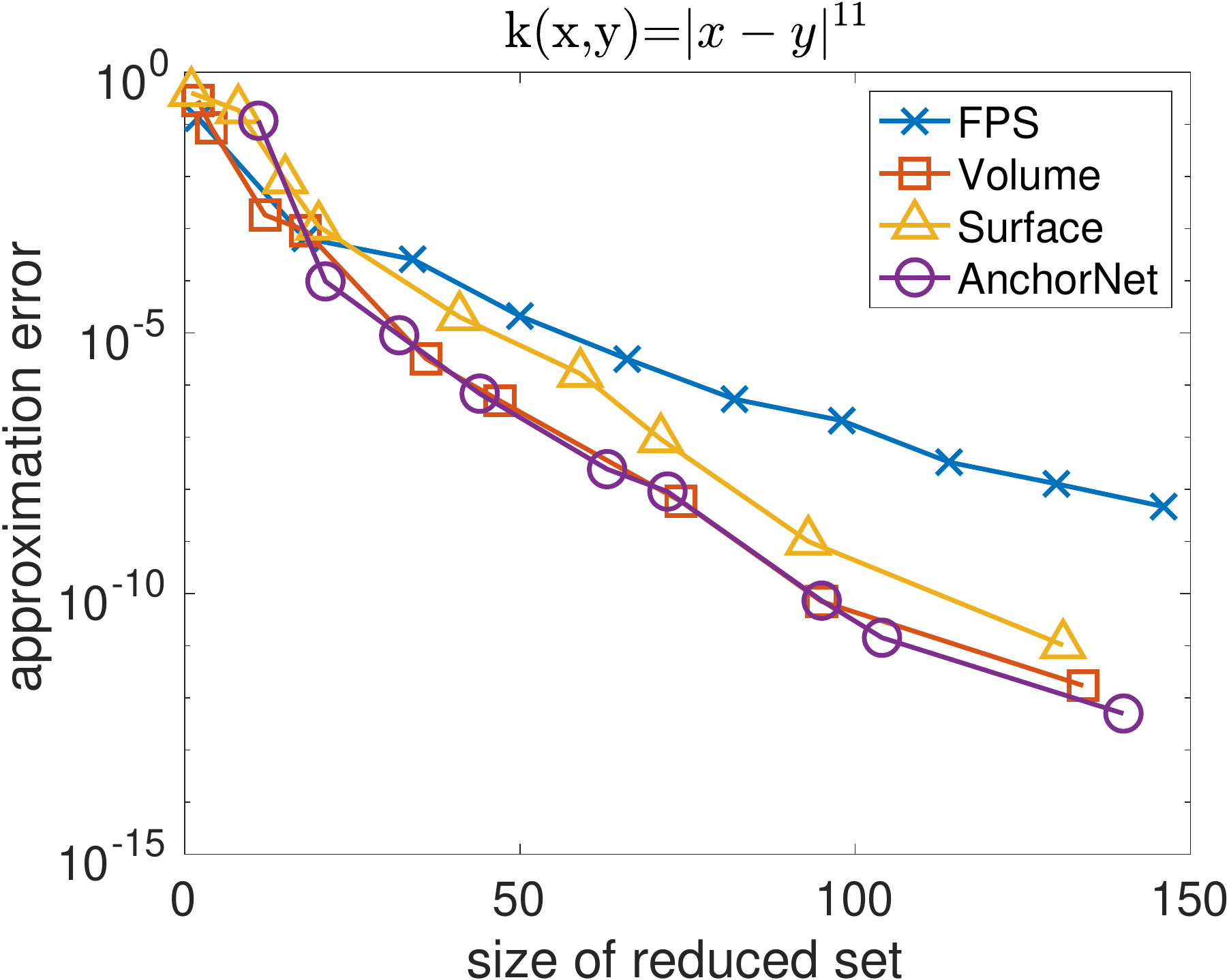}
    \caption{Comparison of data reduction methods for low-rank approximation to $K_{XY}$ with dataset $X\times Y$ in Figure \ref{fig:DinoXY} and different kernels $k(x,y)$ on top of each plot}
    \label{fig:DRerrors}
\end{figure}



\section{Numerical experiments} 
\label{sec:numerical}
We present a series of numerical experiments in this section to illustrate the performance of the proposed data-driven construction in Algorithm \ref{alg:H2}.
The code for the algorithm is available on GitHub\footnote{\url{https://github.com/scalable-matrix/H2Pack/tree/sample-pt-algo}}.
The performance of the proposed data-driven hierarchical matrix construction is shown in Section \ref{sub:Scaling}, inlcuding linear scaling, generality for various kinds of kernels, and the efficiency of hierarchical data reduction for varying kernel parameters.
Comparison to the state-of-the-art special-purpose methods for the Coulomb kernel is presented in Section \ref{sub:Comparison special}.
Comparison to the widely used general-purpose method (interpolation) for various kernels is presented in Section \ref{sub:Comparison general}.
For the data-driven hierarchical construction, volume-based data reduction is used.
For experiments in Section \ref{sub:Scaling} and Section \ref{sub:Comparison special}, we use one compute node on the Georgia Tech PACE-Hive cluster. This node has two sockets and 192 GB DDR4 memory. Each socket has an Intel Xeon Gold 6226 12-core processor. 

The approximation error is measured by the relative matrix-vector product error: 
$\frac{||Kz-\tilde{K}z||}{||z||}$, where $\tilde{K}$ denotes the hierarchical approximation to the kernel matrix $K$ and $z$ is a standard normal random vector.
$||\cdot||$ denotes the 2-norm.


\subsection{Data-driven construction: scaling, generality, once-for-all HiDR}
\label{sub:Scaling}
{
This section has three objectives: (1) test the complexity of the proposed data-driven approach in Section \ref{ssub:Scaling test}, including the hierarchical data reduction (HiDR) and the resulting hierarchical matrix construction;
(2) illustrate the generality of the data-driven approach by testing different kernels in Section \ref{ssub:kernels};
(3) apply HiDR once and use the representor sets to construct hierarchical matrices for various types of kernels, including Gaussian kernels with different bandwidths in Section \ref{ssub:HiDR once for all}.
}

\subsubsection{Scaling test for different datasets} 
\label{ssub:Scaling test}

\paragraph{Datasets} 
Three datasets are used: cube, 3-sphere, Dino.
The ``cube" dataset contains random samples from the uniform distribution in the unit cube $[0,1]^3$.
The ``Dino" dataset is used in \cite{smash} and is illustrated in Figure \ref{fig:datasets}.
It consists of points distributed on a dinosaur-shaped surface in three dimensions.
The ``3-sphere" dataset  (see Figure \ref{fig:datasets}) consists of random points distributed on the surface of three intersecting unit spheres whose centers form an equilateral triangle with side length close to 1.
Roughly the same number of points is sampled from each sphere.
Let $n$ denote the number of points in the dataset.
For the first three synthetic datasets, we test for $n$ from $10^5$ to $1.6\times 10^7$.
For the Dino dataset, since the size of the original data is fixed, we sample $n$ points randomly and vary $n$ from $10^4$ to $1.5\times 10^5$.

\begin{figure}[htbp] 
    \centering 
    \includegraphics[scale=.35]{./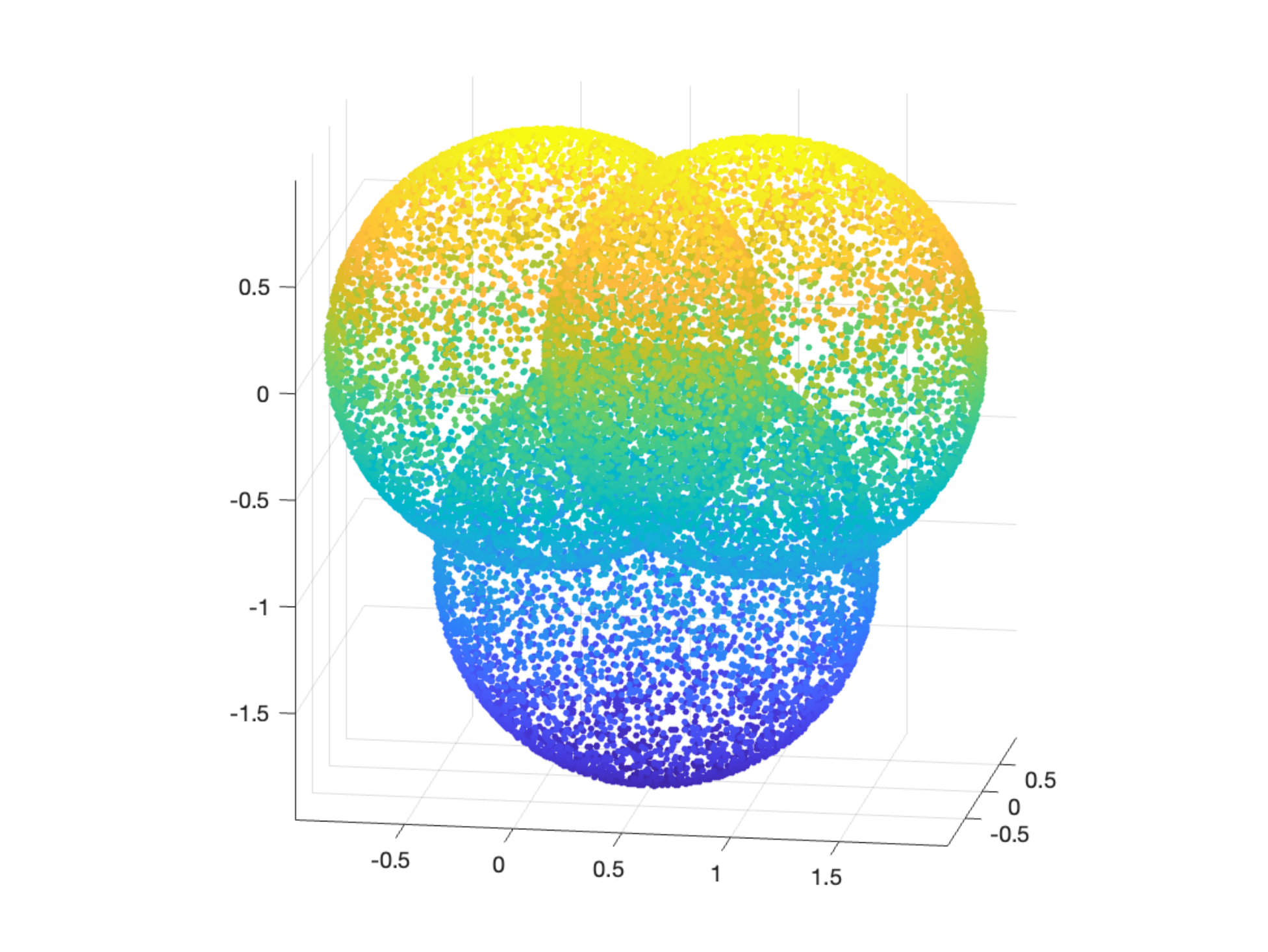} 
    \includegraphics[scale=.35]{./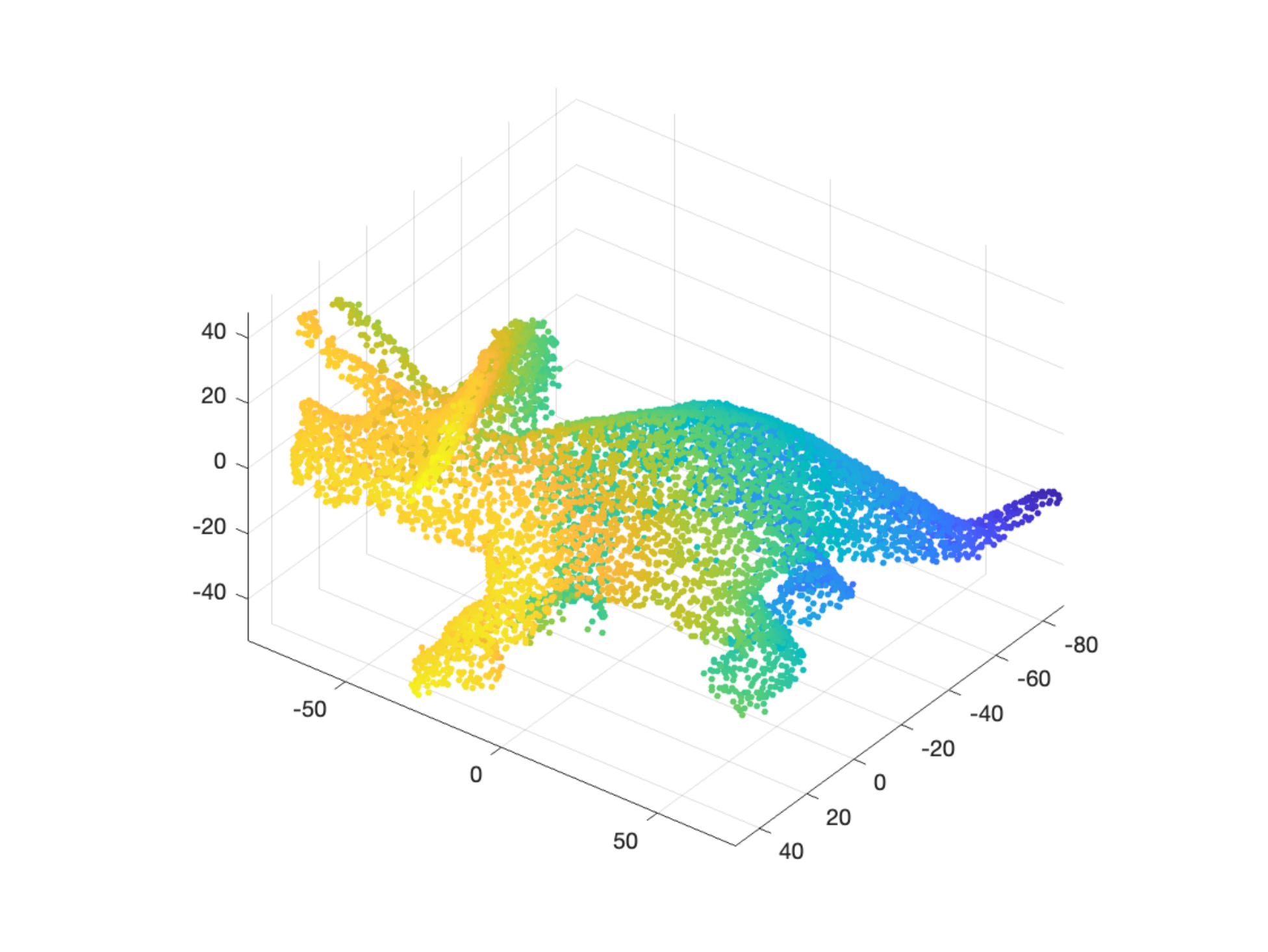} 
    \caption{3-sphere (left) and Dino (right) datasets} 
    \label{fig:datasets} 
\end{figure}

The kernel function is chosen to be the Coulomb kernel $\frac{1}{|x-y|}$ and the approximation error is $10^{-6}$ for each test.
Figure \ref{fig:Geo} shows the timings for hierarchical data reduction (`HiDR'), $\mathcal{H}^2$ matrix construction (`build'), the resulting matrix-vector multiplication (`matvec') with respect to $n$, respectively.
All timings scale linearly with $n$ and the hierarchical data reduction (`HiDR') has a much lower cost than the subsequent hierarchical matrix construction (`build').
The low cost of data reduction and the kernel independence make the data-driven approach suitable for the case when the kernel matrix changes frequently due to changes in the data or kernel function.
\begin{figure}[htbp]
    \centering
    \includegraphics[scale=.3]{./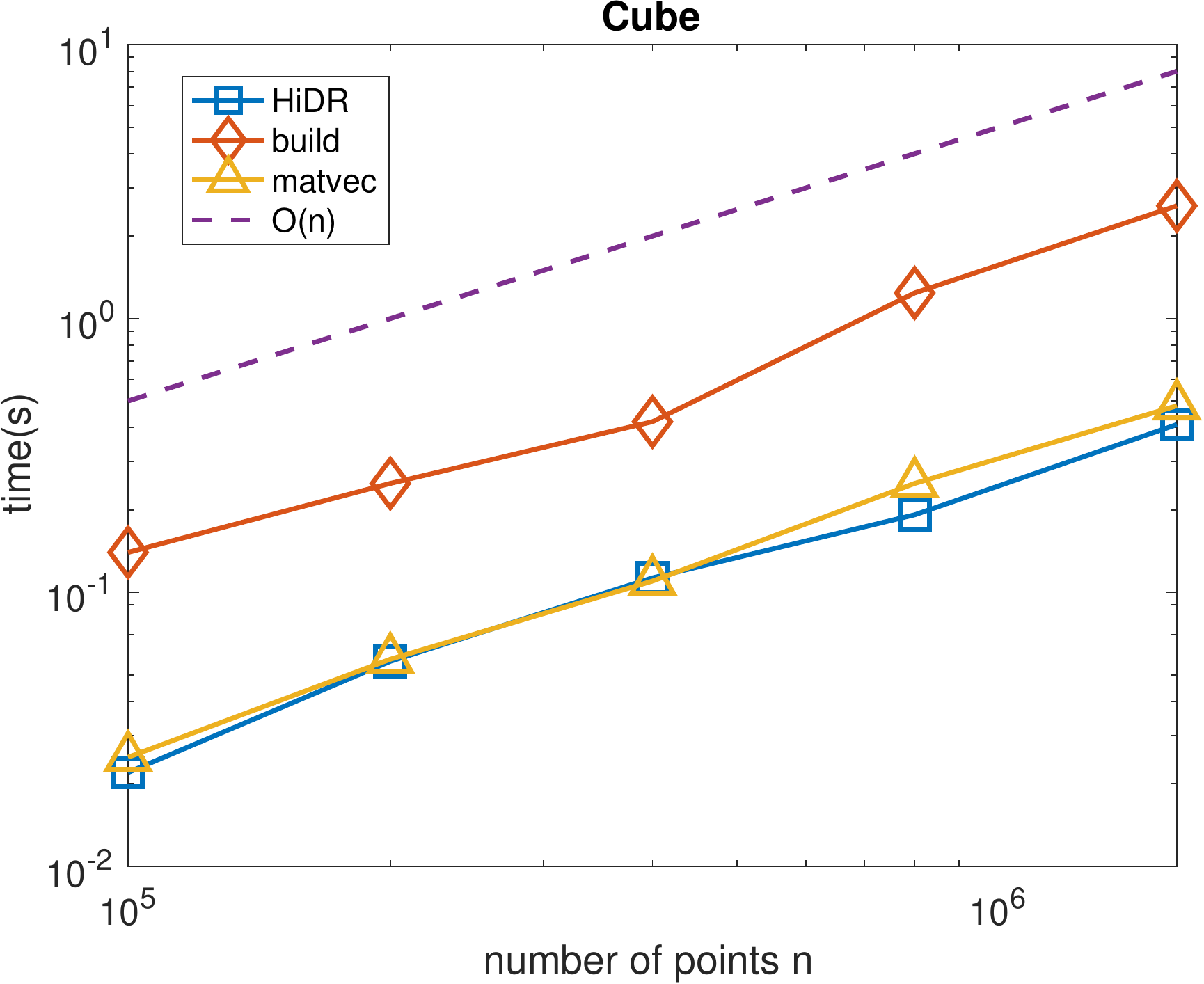}
    \includegraphics[scale=.3]{./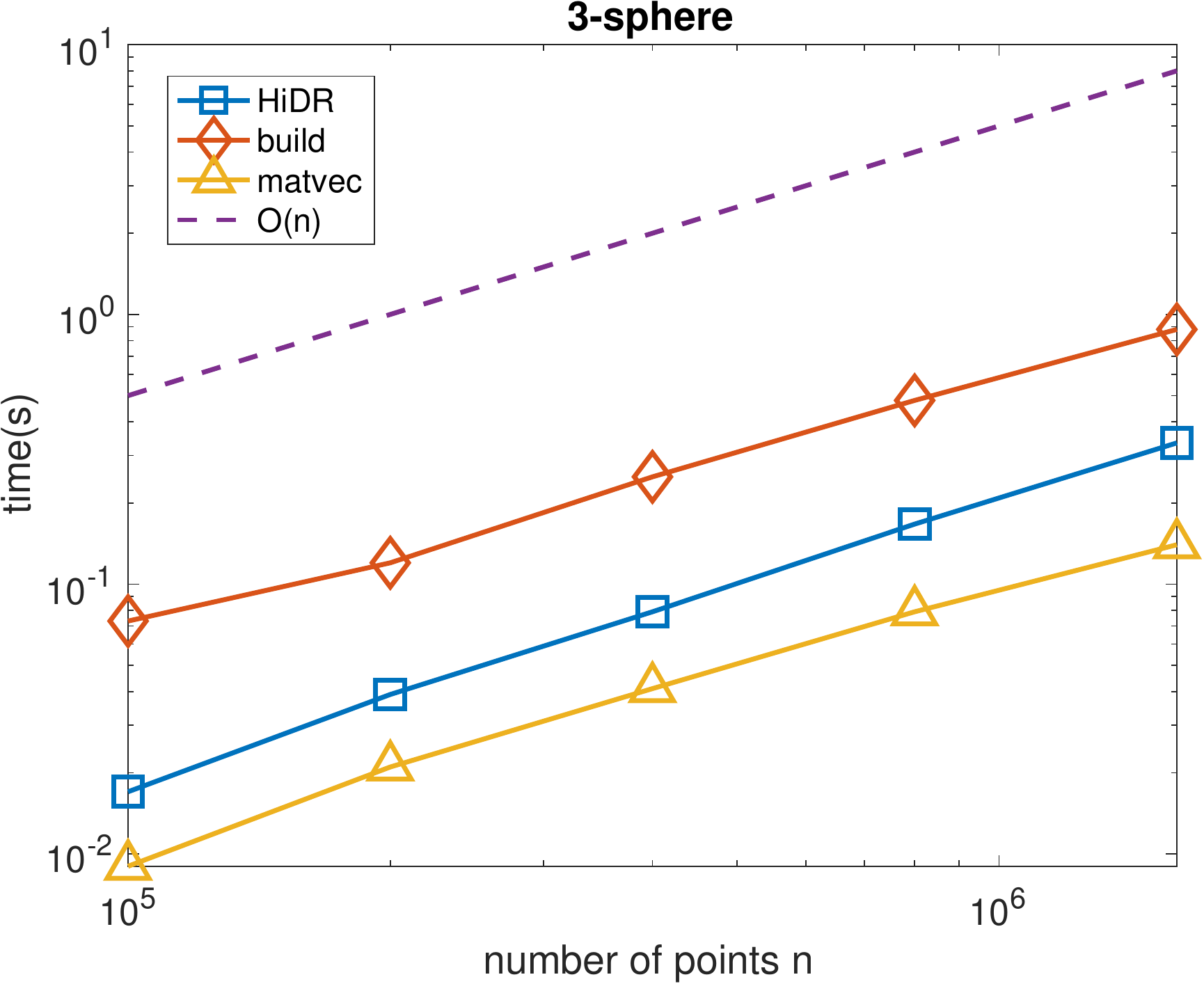}
    \includegraphics[scale=.3]{./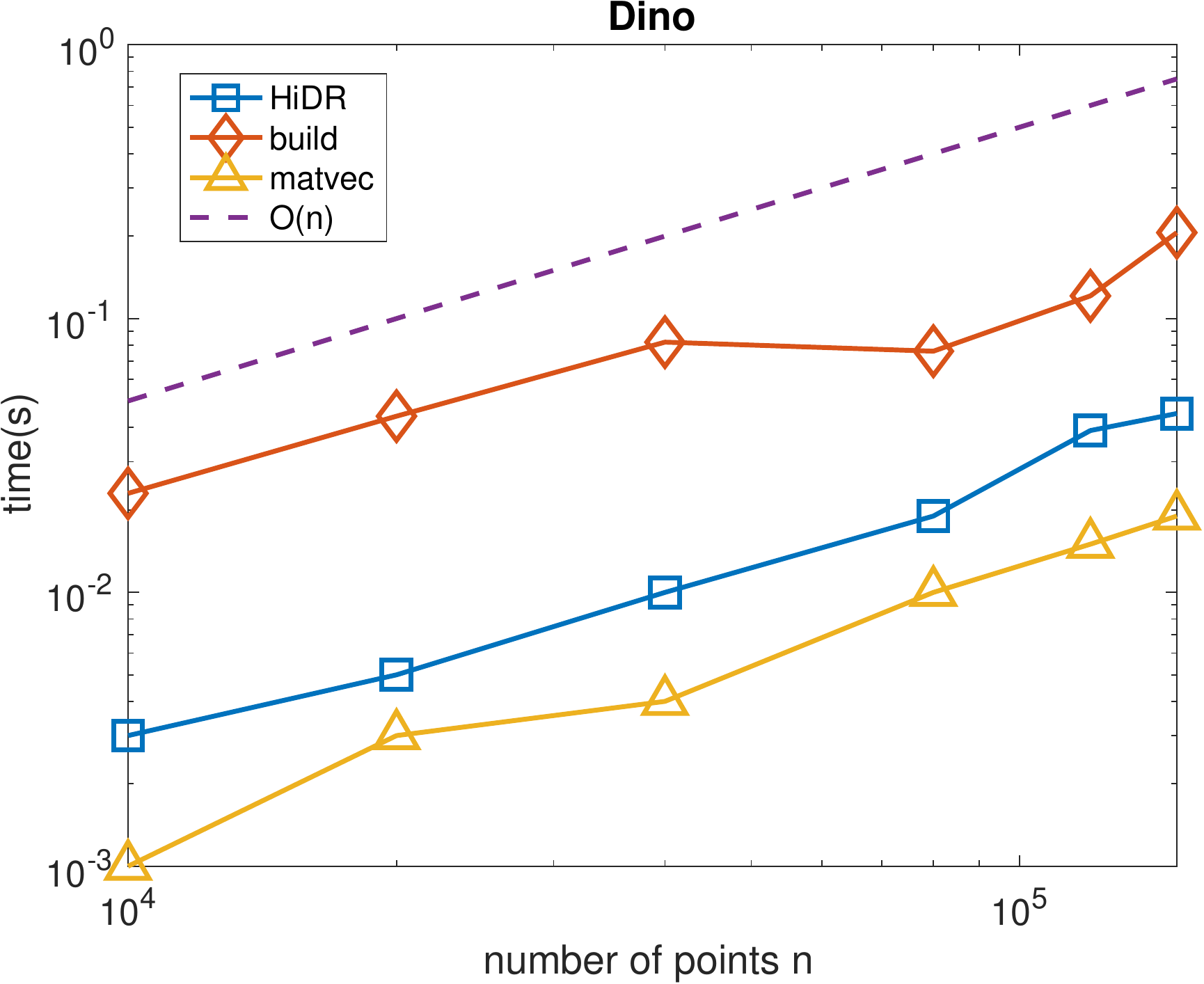}
    \caption{Section \ref{ssub:Scaling test} experiment: Timings of HiDR, $\mathcal{H}^2$ build, matvec for $n$-by-$n$ Coulomb kernel matrices with three datasets in $\mathbb{R}^3$}
    \label{fig:Geo}
\end{figure}

\subsubsection{Scaling test for different kernels}
\label{ssub:kernels}
In this section, we test the proposed data-driven algorithm for the kernel functions in Table \ref{tab:kernels}.
{We show that the general data-driven algorithm is scalable for different types of kernel functions for the same approximation accuracy.
}
The ``3-sphere" dataset is used.


For each kernel function, we measure the time cost of the proposed algorithm as the size of data $n$ increases.
The three types of costs - hierarchical data reduction, hierarchical matrix construction, matrix-vector multiplication - correspond to the three plots in Figure \ref{fig:kernelsTime}.

In Figure \ref{fig:kernelsTime}, each plot shows the timing for all four kernels.
The relative error for each case is $10^{-6}$.
It is easily seen from Figure \ref{fig:kernelsTime} that each cost scales linearly with data size $n$.
The algorithm is able to maintain accuracy for different types of kernel functions.

\begin{table}
\caption{Kernel functions used in the experiments in Section \ref{sub:Scaling}. Here $\kappa_1(x,x)=0$.}
\label{tab:kernels}
\begin{center}
\begin{tabular}{cccc}
\hline
$\kappa_1(x,y)$ & $\kappa_2(x,y)$ & $\kappa_3(x,y)$ & $\kappa_4(x,y)$ \\
\hline 
$\frac{1}{|x-y|}$ & $\exp(-|x-y|^2)$ & $\cos(x\cdot y)$ & $\exp{\left(-\frac{1}{1-0.1|x-y|^2}\right)}$\\
\hline 
\end{tabular}
\end{center}
\end{table}

\subsubsection{HiDR once for all} 
\label{ssub:HiDR once for all}
{
In this experiment - called ``HiDR once for all" - we perform HiDR on the dataset to obtain representor sets and then use the representor sets to construct hierarchical matrix representation for the different kernel functions in Table \ref{tab:kernels} and Gaussian kernels with different bandwidths.
The key here is that, for a fixed compression level, HiDR is only performed \emph{once} and the same representor sets are used for \emph{all} kernels.
}


Figure \ref{fig:kernels_Yi} shows the approximation error of the hierarchical matrix with respect to the average size of farfield representor sets $Y_i^*$.
Different error curves correspond to approximations to different kernels.
We see that by increasing the size of the farfield representor sets, the matrix approximation error is reduced effectively for all kinds of kernel functions.
Since HiDR is only applied once, the precomputation cost is almost negligible when amortized over multiple kernels.
The accuracy as seen from Figure \ref{fig:kernels_Yi} justifies the data-driven construction with the efficient kernel-independent HiDR.
We see that the data-driven approach is particularly useful when hierarchical matrices for different kernel functions or kernel parameters need to be computed.

{In applications like Gaussian processes, the bandwidth parameter for the Gaussian kernel is unknown and is determined by an iterative algorithm. 
Therefore, the bandwidth changes constantly, thus the kernel function.
This makes existing hierarchical matrix constructions inefficient because the entire hierarchical algorithm needs to be run from scratch every time the bandwidth changes. 
The proposed data-driven approach, however, performs data reduction only \emph{once} and no matter what the bandwidth is, the hierarchical matrix representation can be constructed rapidly based on the computed representor sets.
It can be seen from Figure \ref{fig:kernels_Yi}(right) that for a wide range of bandwidth values, the approximation error decays effectively as more points are used in the representor sets. The nearly zero approximation error for the bandwidth $L=0.01$ is due to the fact that the admissible block is almost a zero matrix as $\exp(-|x-y|^2/0.01^2)\approx 0$ when $x$ and $y$ are away from each other.}

{Overall, it can be seen from the experiments that the data-driven approach serves as a black-box tool for rapidly computing hierarchical matrices for general kernel functions. It is especially efficient in the situation when multiple kernel functions need to be approximated.}

\begin{figure}[htbp]
    \centering
    \includegraphics[scale=.3]{./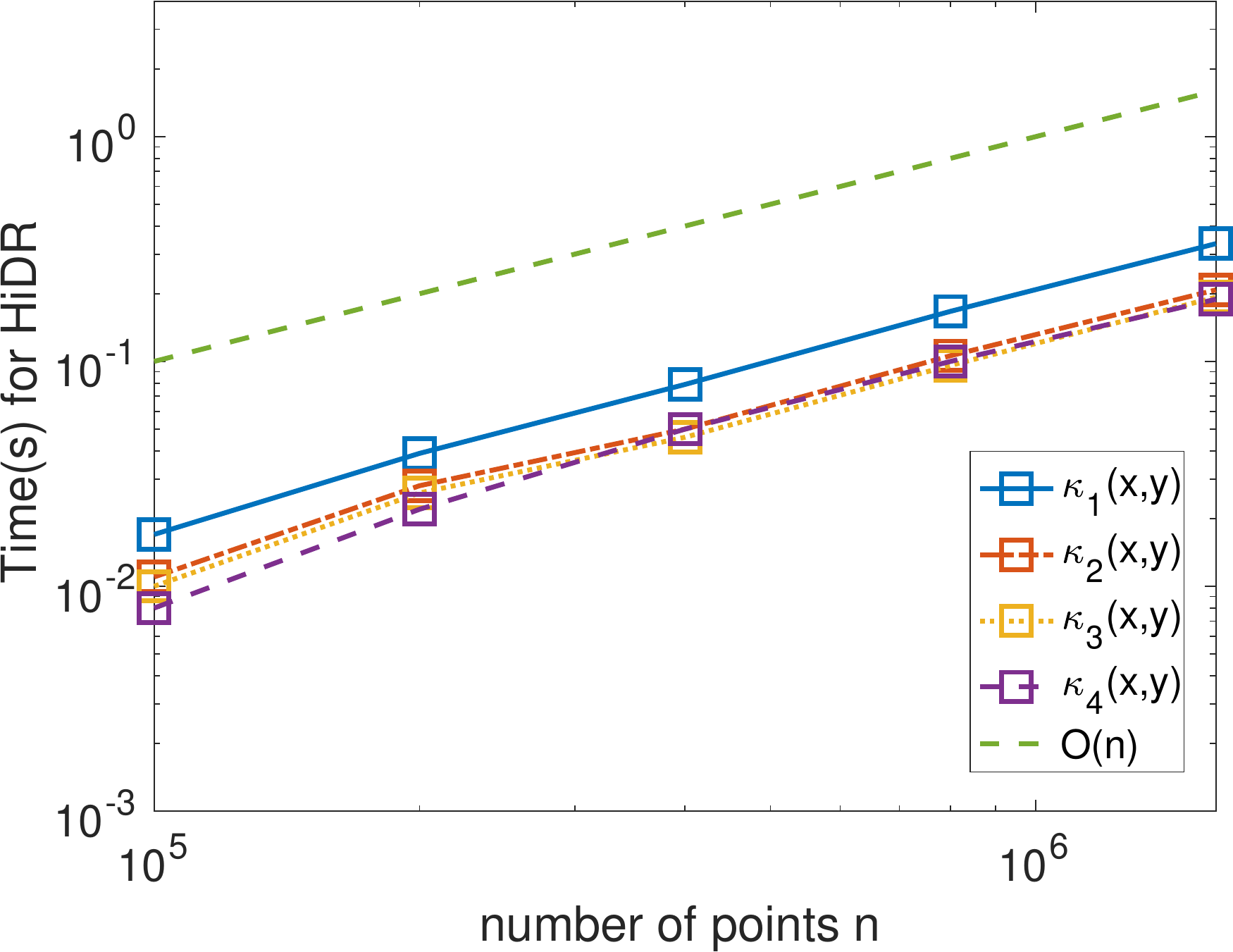}
    \includegraphics[scale=.3]{./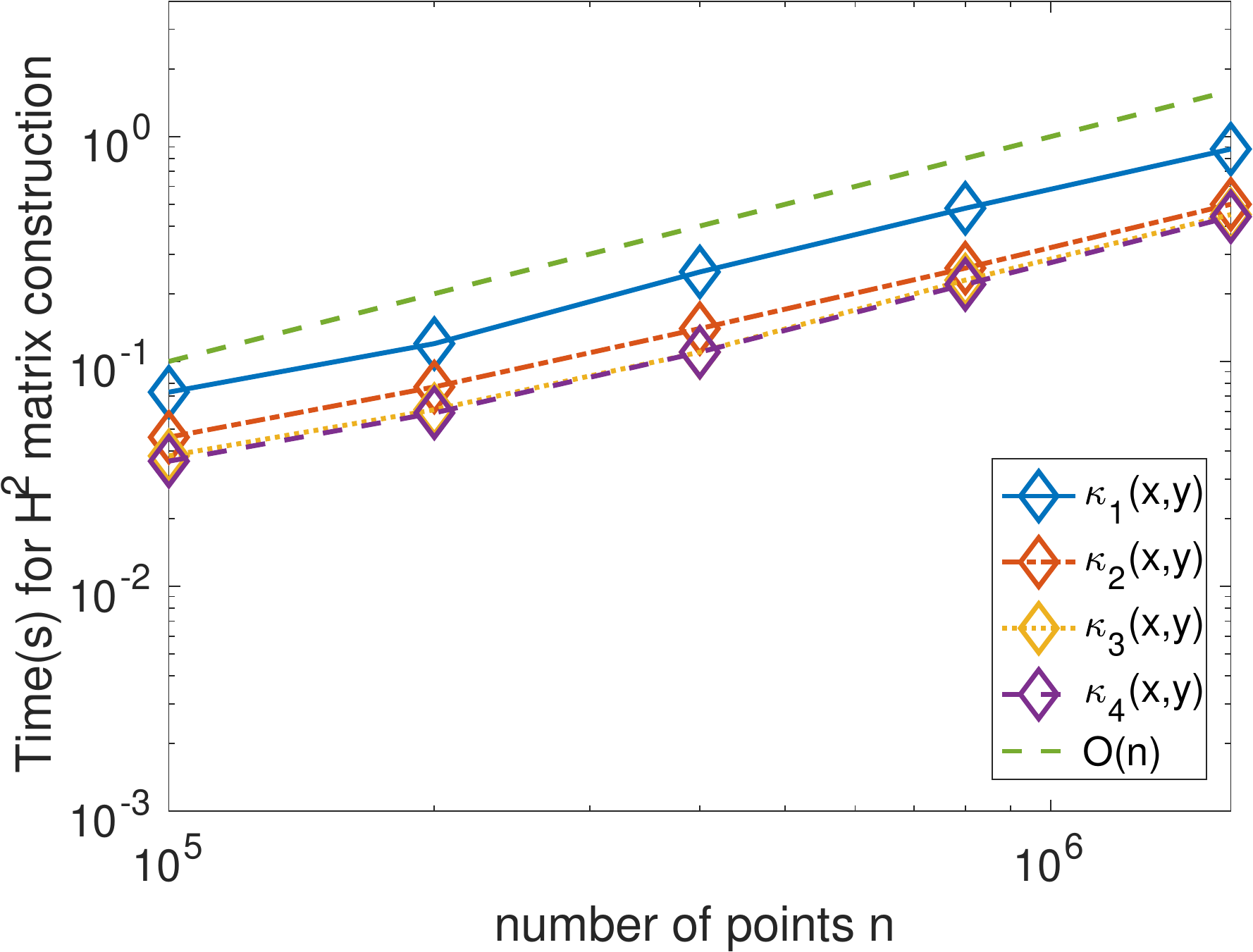}
    \includegraphics[scale=.3]{./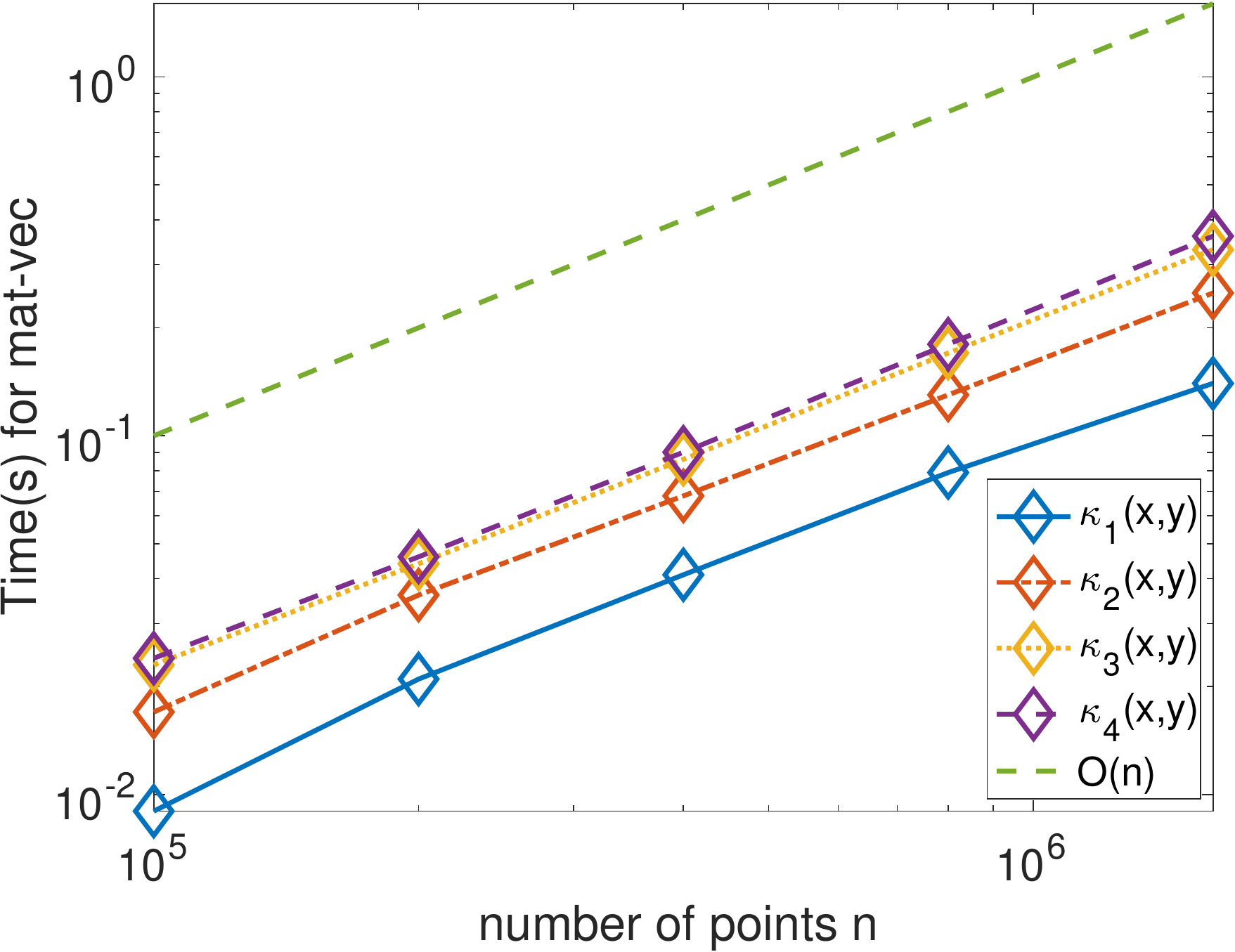}
    \caption{Section \ref{ssub:kernels} scaling test for kernels in Table \ref{tab:kernels}: CPU time for HiDR (left), hierarchical matrix construction (middle), matrix-vector multiplication (right)}
    \label{fig:kernelsTime}
\end{figure}

\begin{figure}[htbp] 
    \centering 
    \includegraphics[scale=.4]{./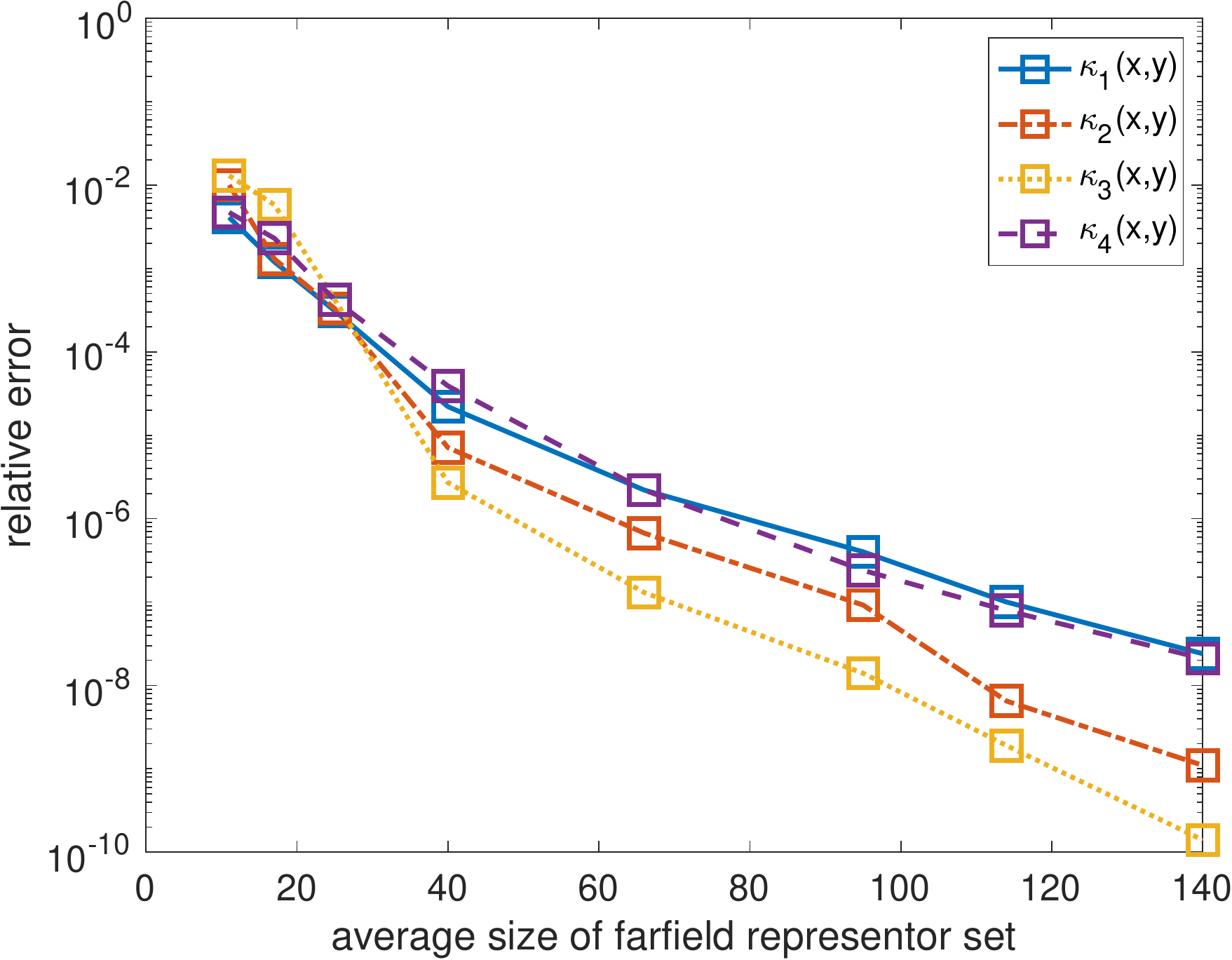}
    \includegraphics[scale=.4]{./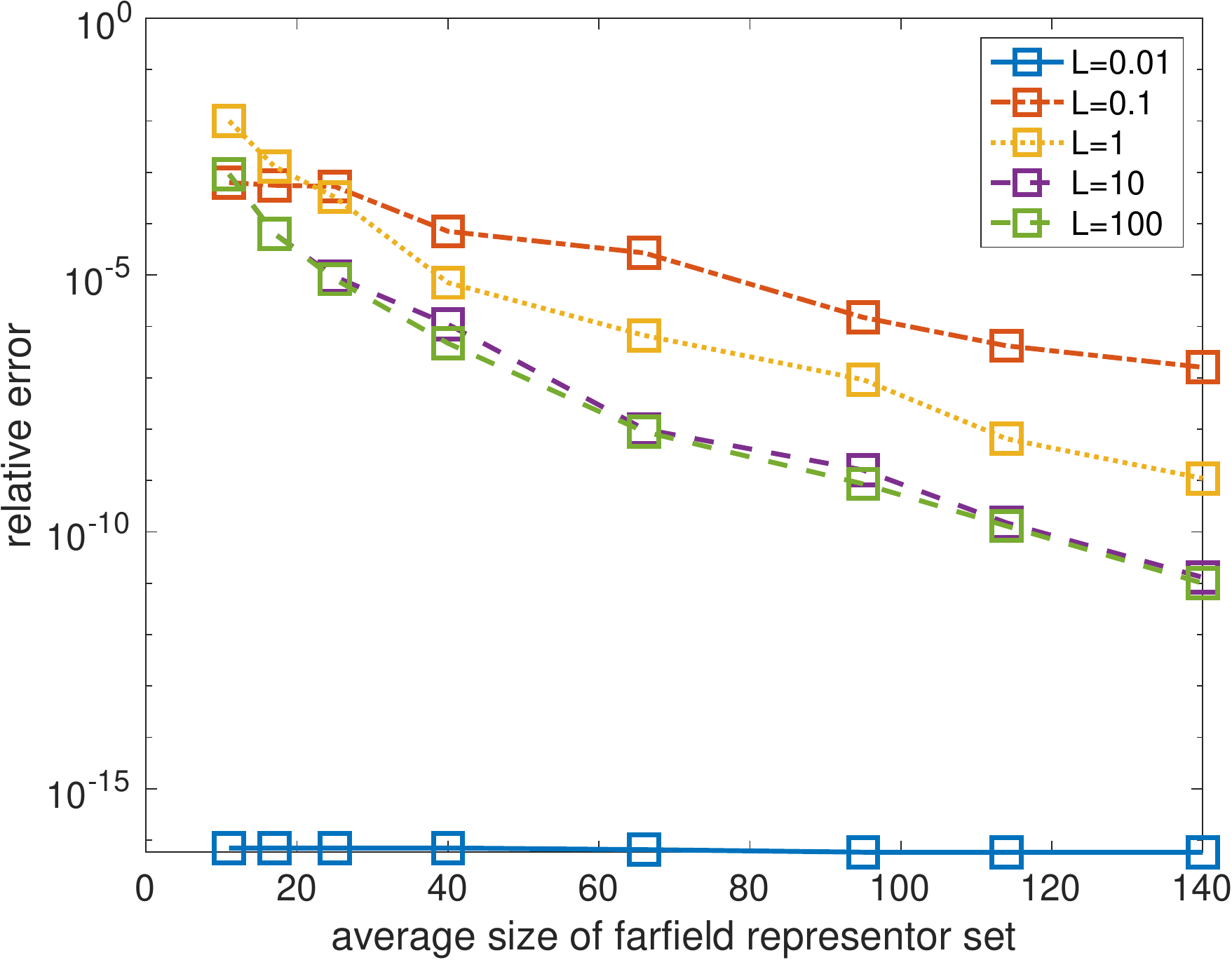}
    \caption{Section \ref{ssub:HiDR once for all}: perform HiDR only once to obtain representor sets and then use them to construct hierarchical matrices for multiple kernels: matrix approximation error vs average size of farfield representor sets $Y_i^*$. Left: kernels in Table \ref{tab:kernels}; Right: Gaussian kernel $\exp(-|x-y|^2/L^2)$ with different bandwidth $L=10^{k}$ with $k=-2,-1,0,1,2$}
    \label{fig:kernels_Yi}
\end{figure}

\subsection{Comparison to special-purpose methods for the Coulomb kernel}
\label{sub:Comparison special}
In this section, we compare the new general-purpose data-driven (`DD') construction to several optimized packages for the Coulomb kernel $\kappa(x,y)=\frac{1}{|x-y|}$, for example,
FMM3D\footnote{https://fmm3d.readthedocs.io/en/latest/}\cite{FMM3D}, PVFMM \cite{pvfmm}, 
and Proxy Surface method (cf. \cite{Martinsson20051,ID2005,Gillman2012}) implemented in H2Pack \cite{h2pack}.
These methods are specialized for the Coulomb kernel to offer better efficiency in practice than the interpolation-based methods for constructing $\mathcal{H}^2$ matrices.

DD, FMM3D, PVFMM and H2Pack are compiled using Intel C/C++/Fortran compiler v19.0.5 with optimization flags ``-xHost -O3''. Intel MKL 19.0.5 is used in all tested libraries to perform general matrix-vector and matrix-matrix multiplications. 
DD, H2Pack, and FMM3D 
{use one thread per CPU core and 24 cores on one computing node. PVFMM uses MVAPICH2 2.3.2 as the MPI backend and uses one MPI process with 24 cores on one computing node.}

We use the same datasets as in Section \ref{sub:Scaling}.
For every method, the total time is computed as
$$\text{total time = precomputation + $\mathcal{H}^2$ construction + matrix-vector multiplication}.$$
{FMM3D and proxy surface method do not have precomputation, while PVFMM and DD require precomputation.}
For DD, the precomputation refers to HiDR.


Timings for precomputation, hierarchical build, matvec, are shown in  
Figure \ref{fig:Geo-precomp}, Figure \ref{fig:Geo-build}, Figure \ref{fig:Geo-mv}, respectively.
The total time is shown in Figure \ref{fig:Geo-total}.
The relative error (in 2-norm) for each test is $10^{-6}$.

From Figure \ref{fig:Geo-precomp}, we see that the proposed hierarchical data-reduction (HiDR) requires significantly lower precomputation cost compared to PVFMM.
This is due to the fact that the kernel matrix is \emph{never} accessed in HiDR and \emph{no} algebraic compression is computed.
Moreover, we see from Figure \ref{fig:Geo-precomp} that the advantage of the data-driven method becomes more obvious for irregular data from a manifold, such as 3-sphere and Dino.
It can be seen that PVFMM has almost constant cost independent of the size of the dataset $n$. 
This is because PVFMM treats matrix compression as a continuous problem (thus independent of the size of data) and the precomputation involves solving integral equations on spheres to facilitate the farfield compression.

From Figure \ref{fig:Geo-build}, 
we see that FMM3D outperforms other methods in hierarchical construction.
This is because, unlike other methods, FMM3D does \emph{not} compute and store a hierarchical matrix representation. Instead, it computes the hierarchical representation when performing matrix-vector multiplication.
The other methods have similar performance, where no single method performs significantly better than others across all datasets.
It should be noted that, for methods with precomputation, the hierarchical construction time can be further reduced at the expense of more precomputation time.
In principle, more time spent in precomputation could yield faster hierarchical construction.


For matvec, Figure \ref{fig:Geo-mv} shows that the data-driven method and proxy surface method achieve similar performance that is in general better than FMM3D and PVFMM.
FMM3D is significantly slower than other methods due to the on-the-fly hierarchical construction.
For data sampled from a manifold, PVFMM is outperformed by DD and Proxy Surface.
The results in Figure \ref{fig:Geo-mv} justify the efficiency of the hierarchical matrix representation built from the fast HiDR in Figure \ref{fig:Geo-precomp}.

For the total computation time, as can be seen from Figure \ref{fig:Geo-total}, 
we see that Proxy Surface provides the best performance overall, followed by DD.
The advantage of DD is more evident for data from a manifold, e.g. 3-sphere, Dino.
In general, we see that the data-driven method leads to a lot more computational savings when data is sampled from a low-dimensional manifold.

It should be emphasized that the Proxy Surface method is a specialized method optimized for the Coulomb kernel to offer superior efficiency, while DD is a general-purpose approach that can be applied to a variety of kernel functions (cf. Table \ref{tab:kernels}).
Unlike the special-purpose methods, no analytic property of the kernel function is used in the data-driven hierarchical construction.
It is nonetheless possible to design specialized data-driven algorithms for the kernel function of interest to improve efficiency.
{Overall, we conclude from the results in Figure \ref{fig:Geo-total} that the data-driven method, as a black-box tool for hierarchical matrix computations, also offers excellent efficiency for special kernels without utilizing any specific property of the kernel.}

\begin{figure}[htbp]
    \centering
    \includegraphics[scale=.3]{./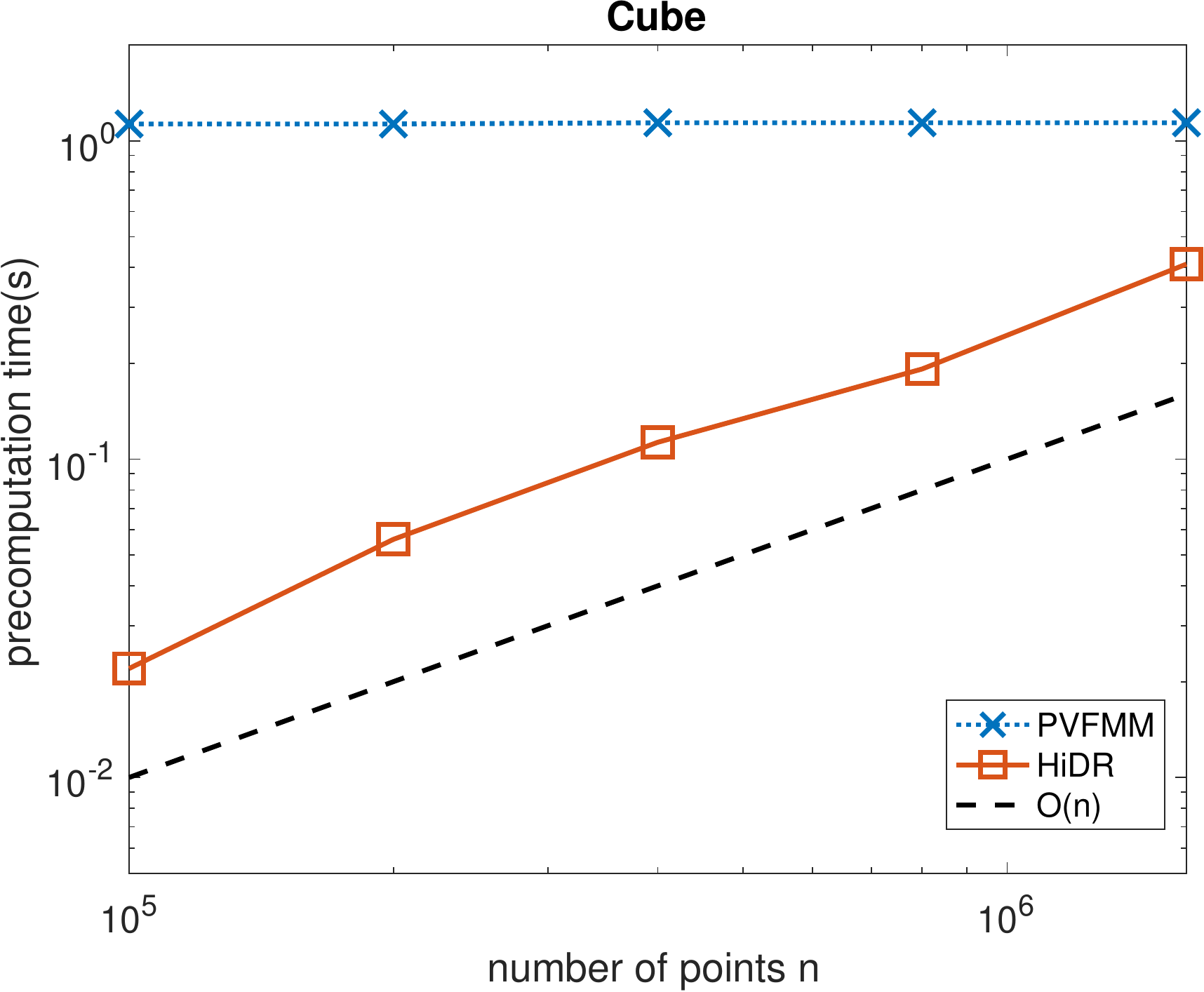}
    \includegraphics[scale=.3]{./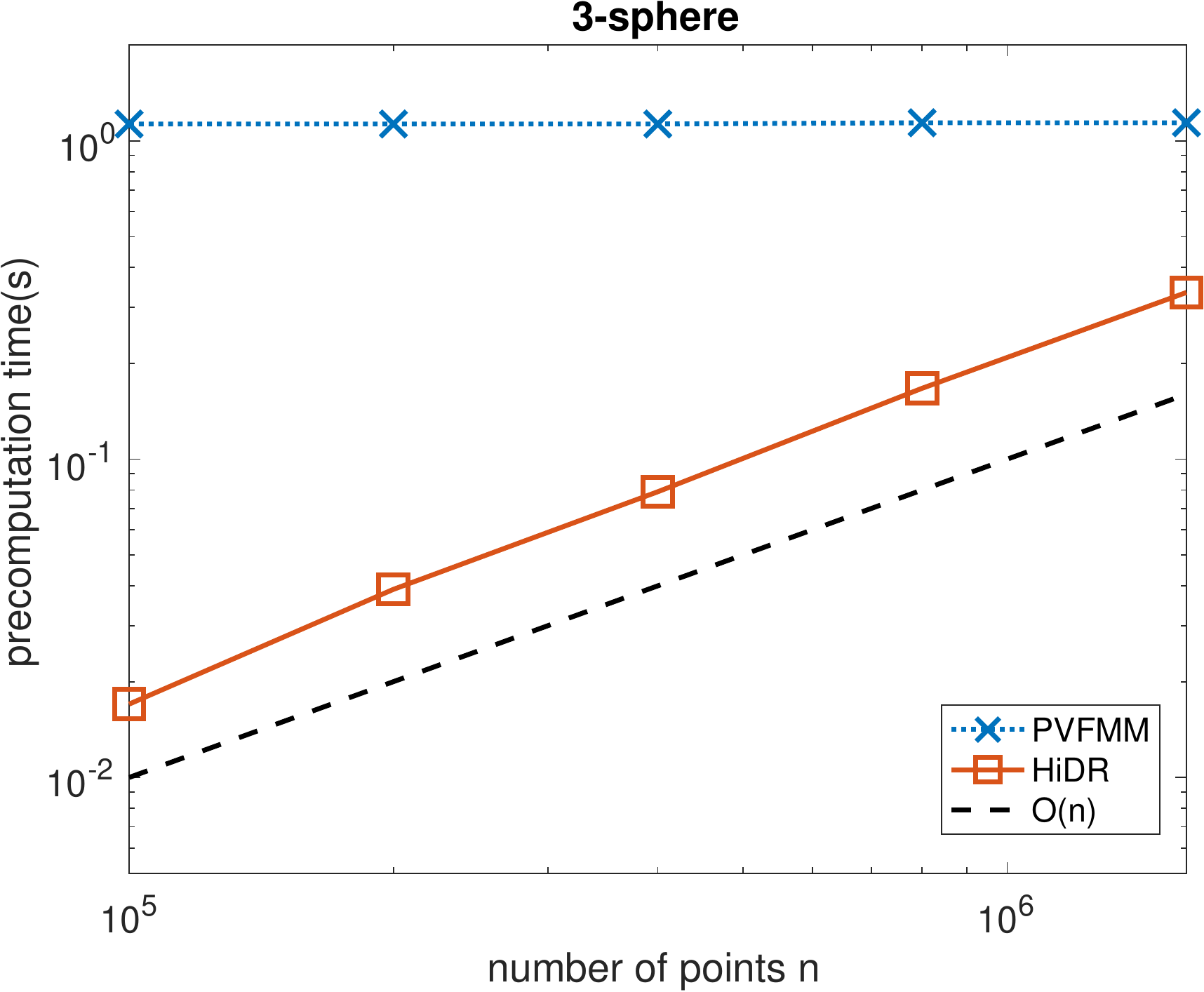}
    \includegraphics[scale=.3]{./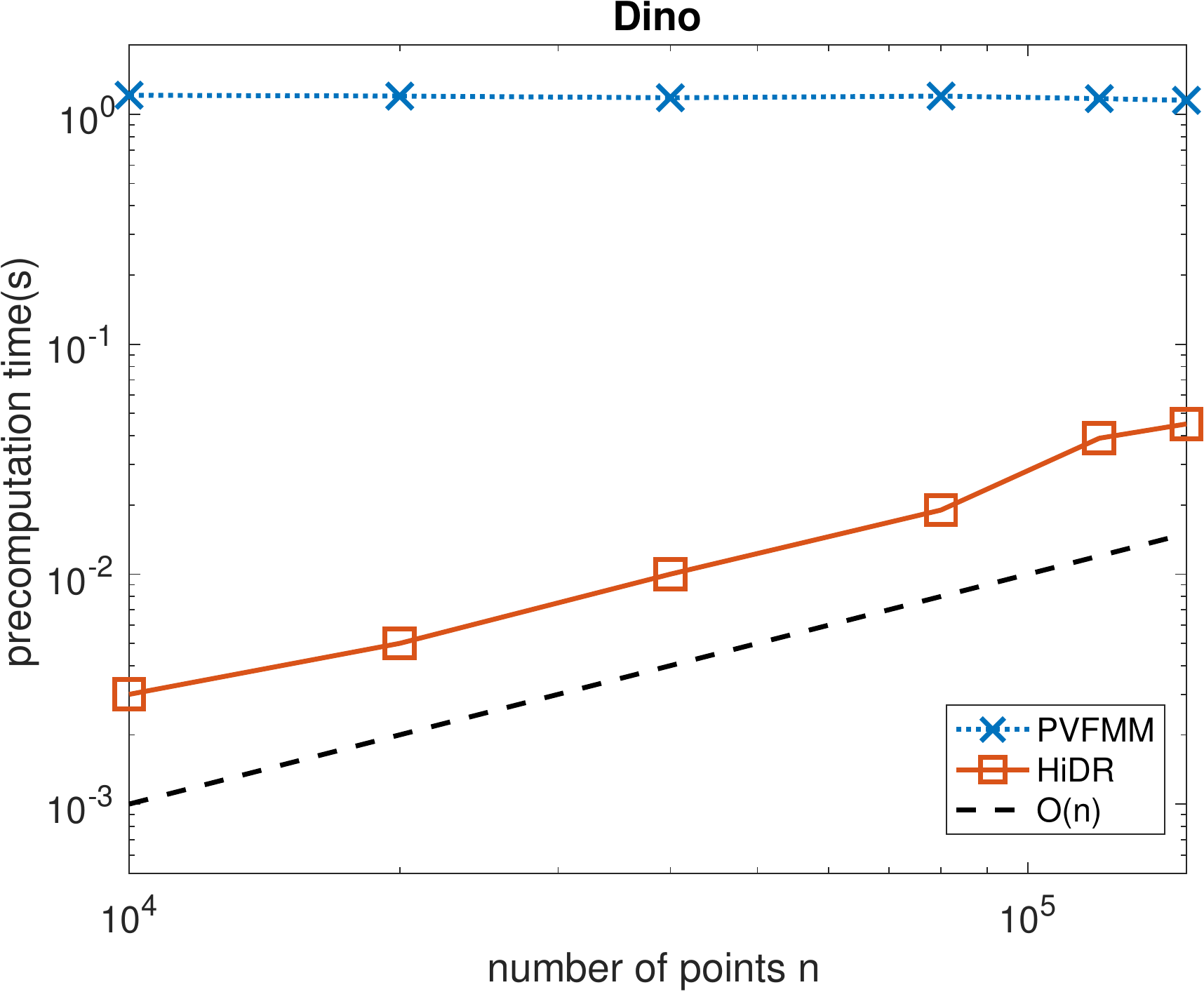}
    \caption{Section \ref{sub:Comparison special} experiment: Precomputation time of PVFMM and DD for approximating $n$-by-$n$ Coulomb kernel matrices with datasets Cube, 3-sphere, Dino}
    \label{fig:Geo-precomp}
\end{figure}

\begin{figure}[htbp]
    \centering
    \includegraphics[scale=.3]{./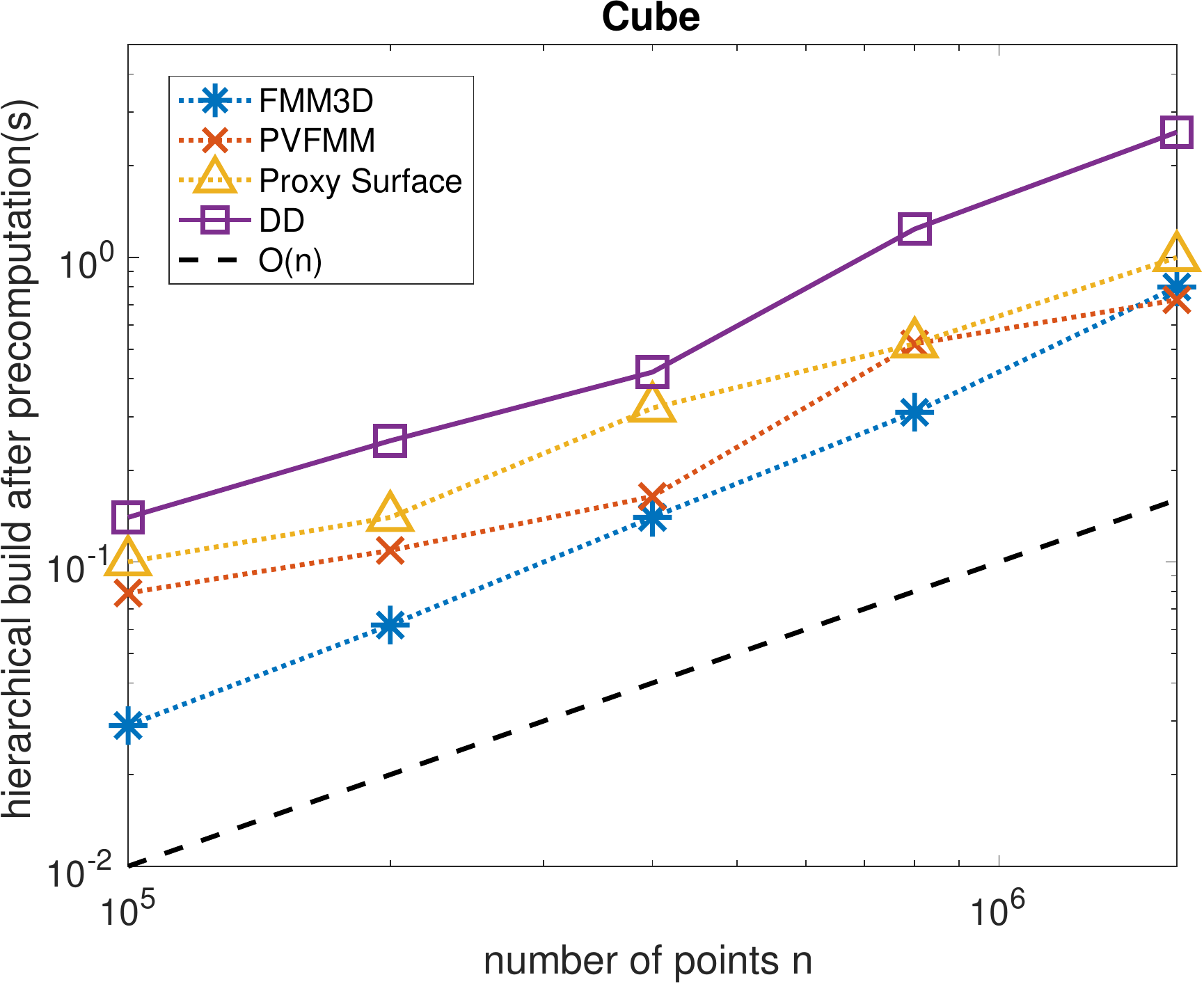}
    \includegraphics[scale=.3]{./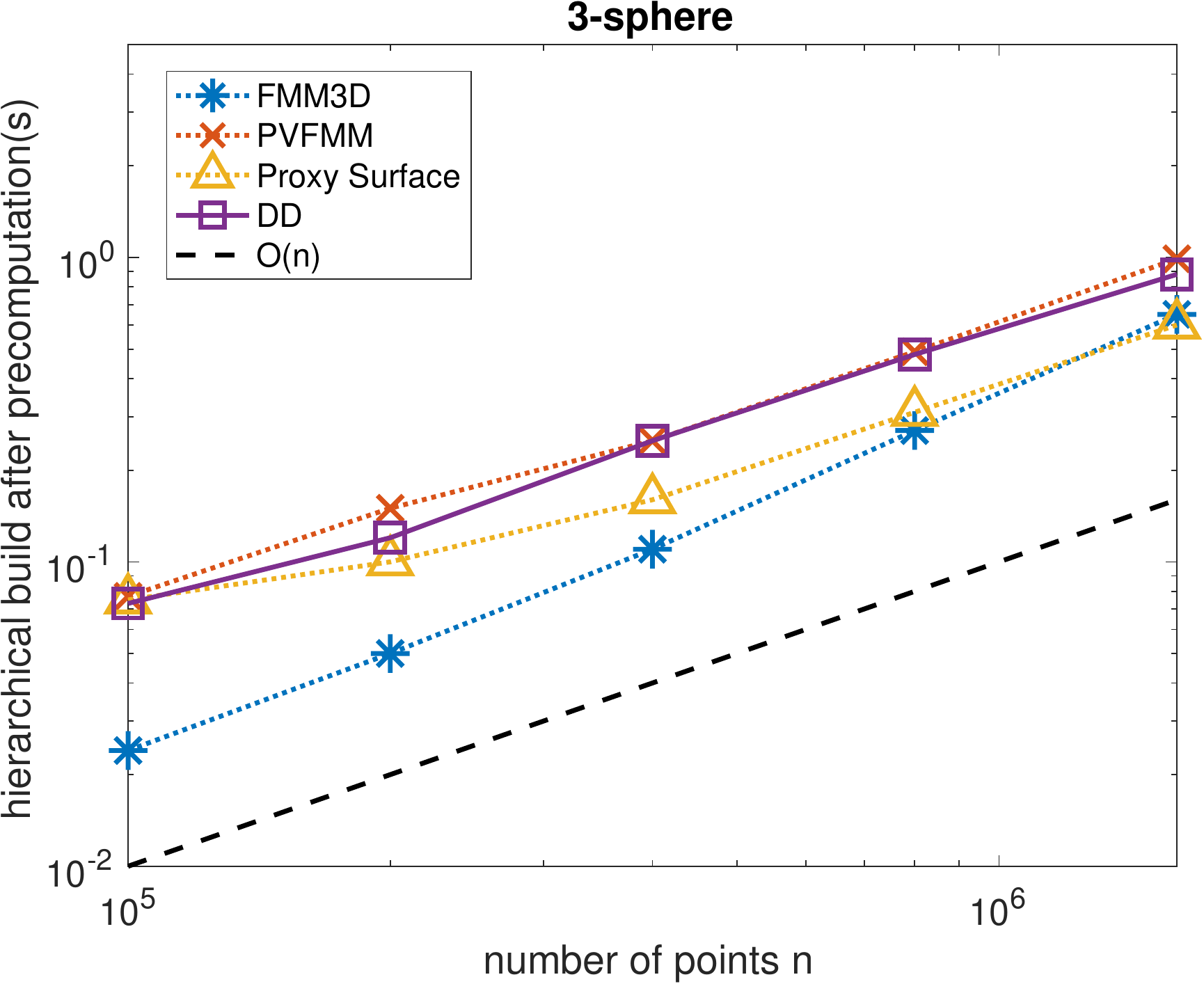}
    \includegraphics[scale=.3]{./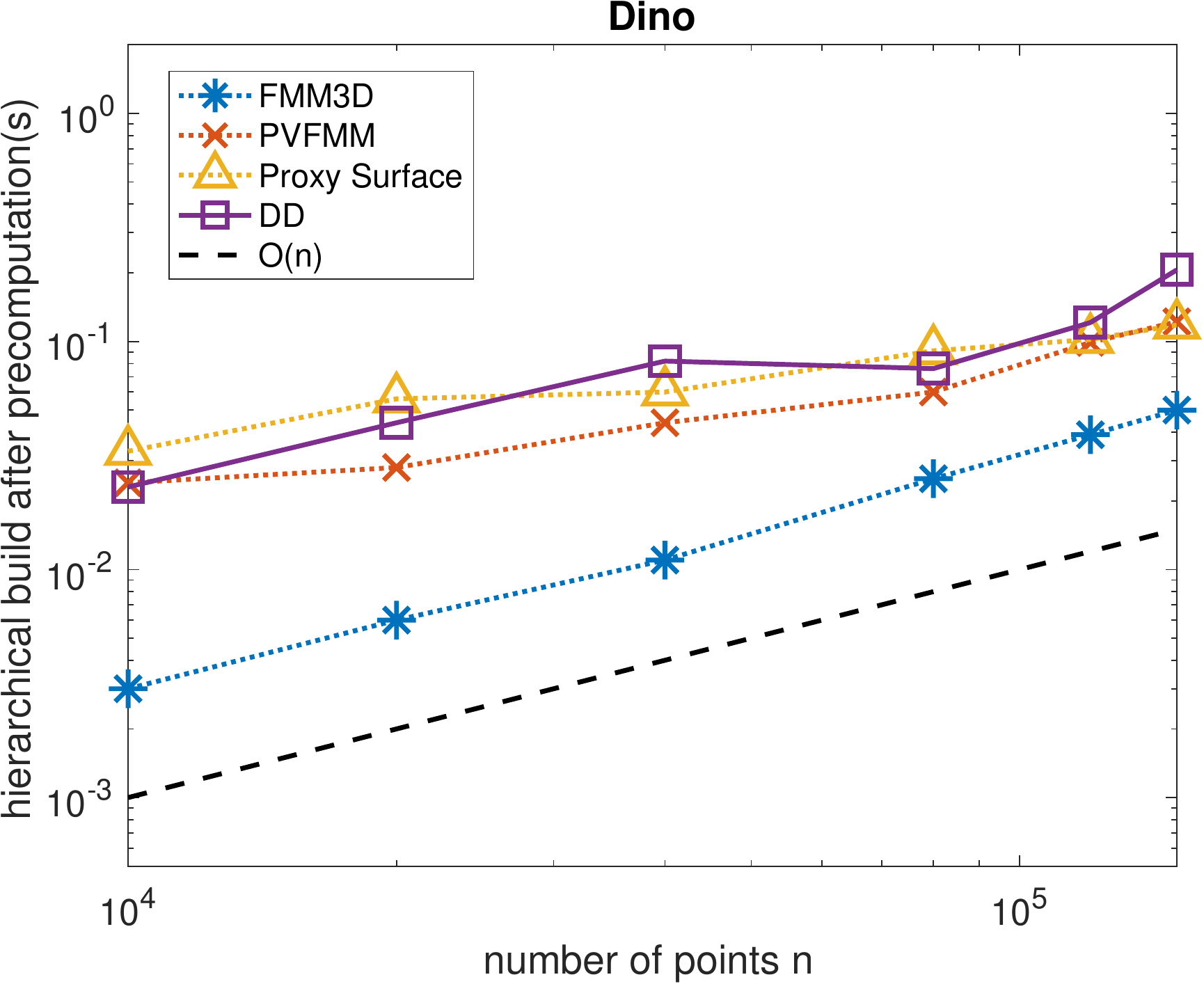}
    \caption{Section \ref{sub:Comparison special}: Hierarchical construction time (after precomputation) of FMM3D, PVFMM, Proxy Surface and DD for approximating $n$-by-$n$ Coulomb kernel matrices with datasets Cube, 3-sphere, Dino}
    \label{fig:Geo-build}
\end{figure}

\begin{figure}[htbp]
    \centering
    \includegraphics[scale=.3]{./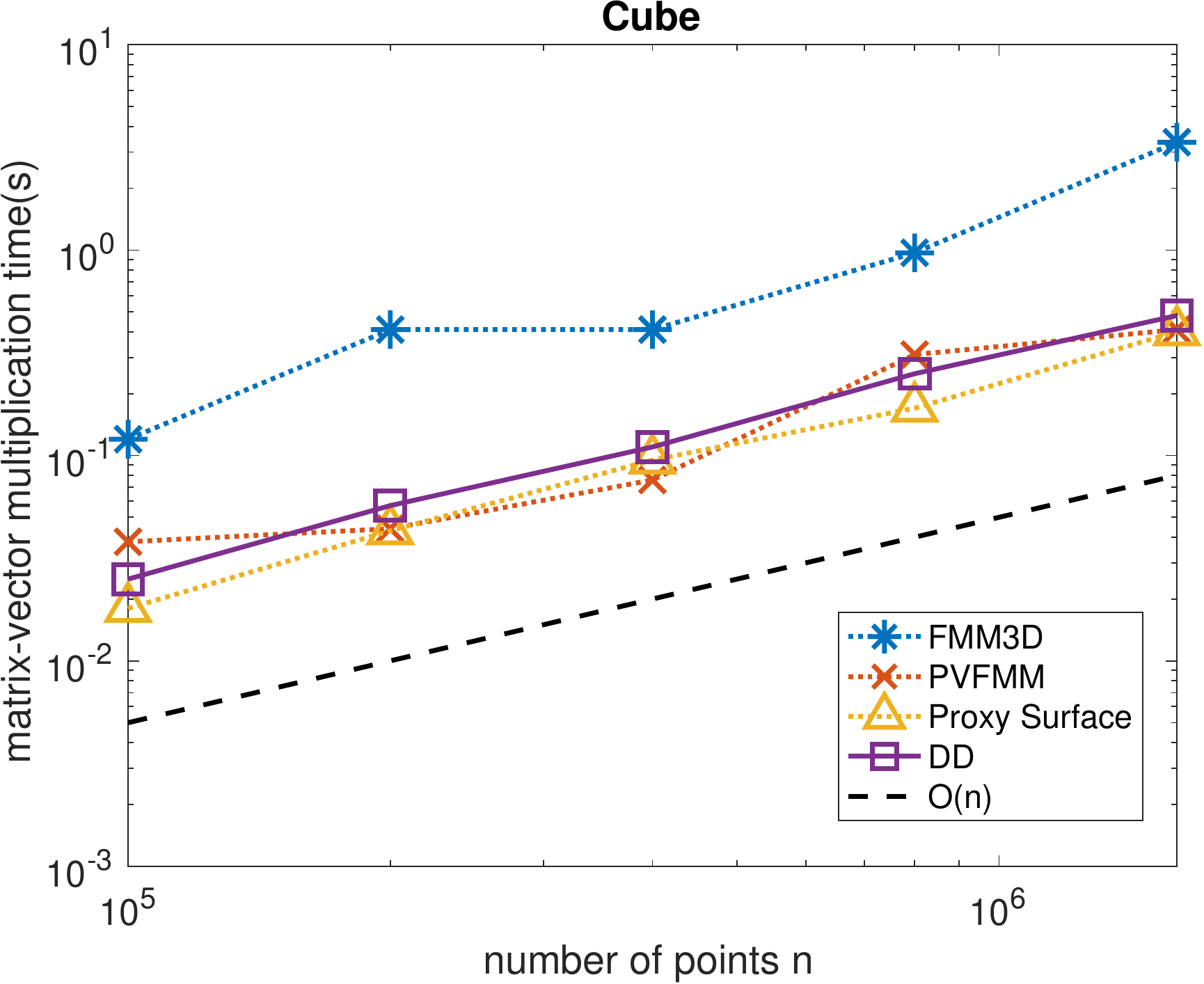}
    \includegraphics[scale=.3]{./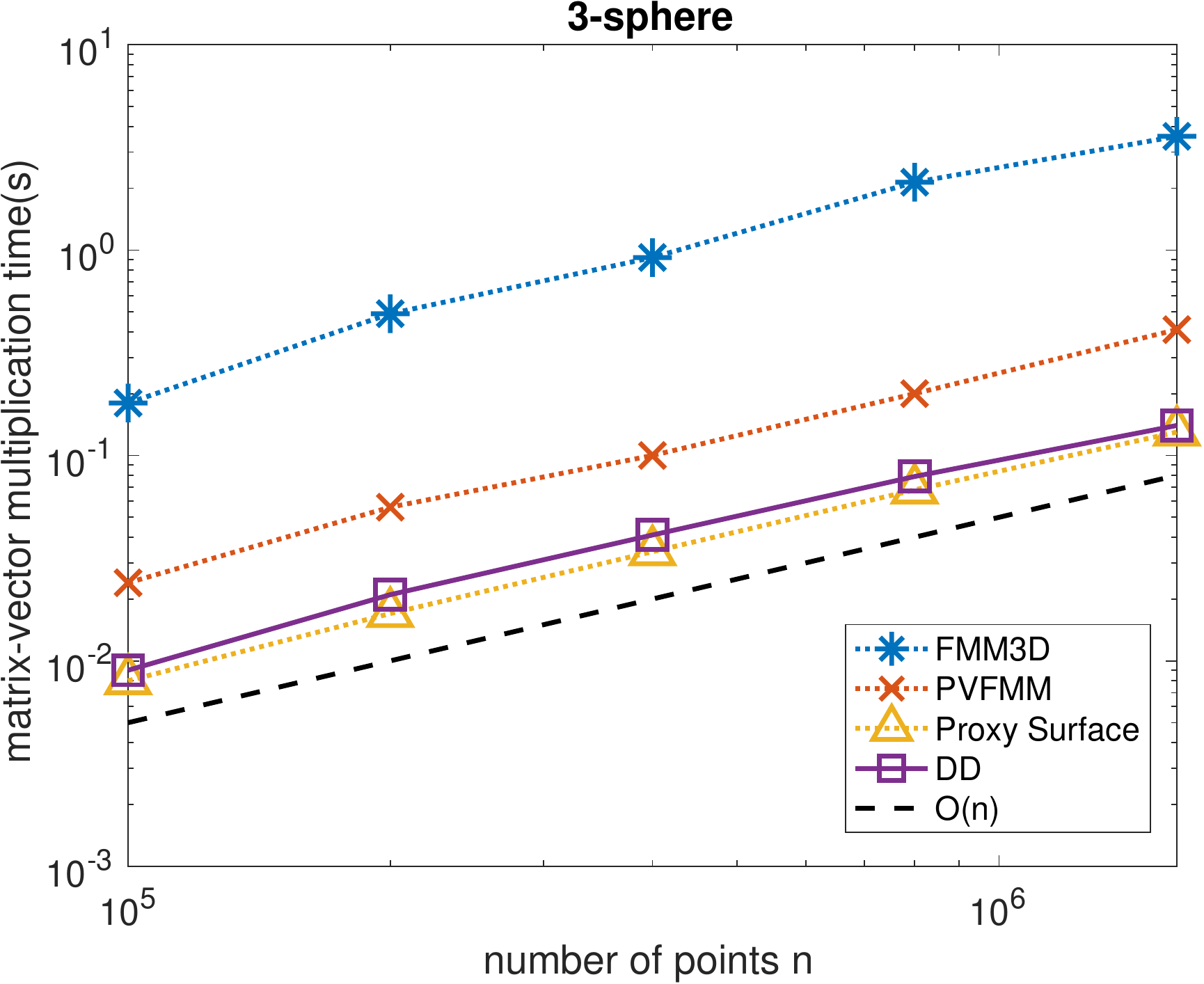}
    \includegraphics[scale=.3]{./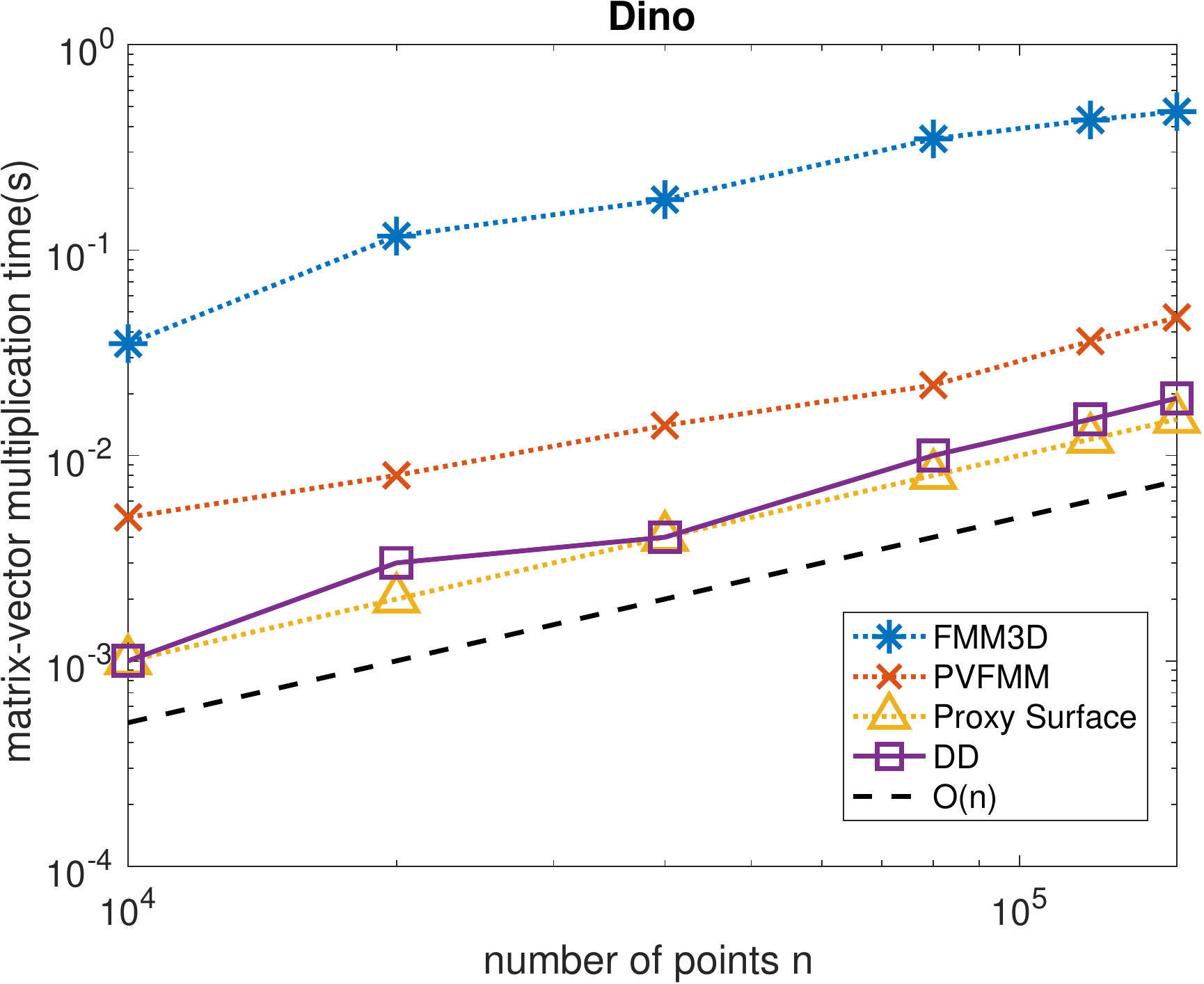}
    \caption{Section \ref{sub:Comparison special}: Matrix-vector multiplication time of FMM3D, PVFMM, Proxy Surface and DD for approximating $n$-by-$n$ Coulomb kernel matrices with datasets Cube, 3-sphere, Dino}
    \label{fig:Geo-mv}
\end{figure}

\begin{figure}[htbp]
    \centering
    \includegraphics[scale=.3]{./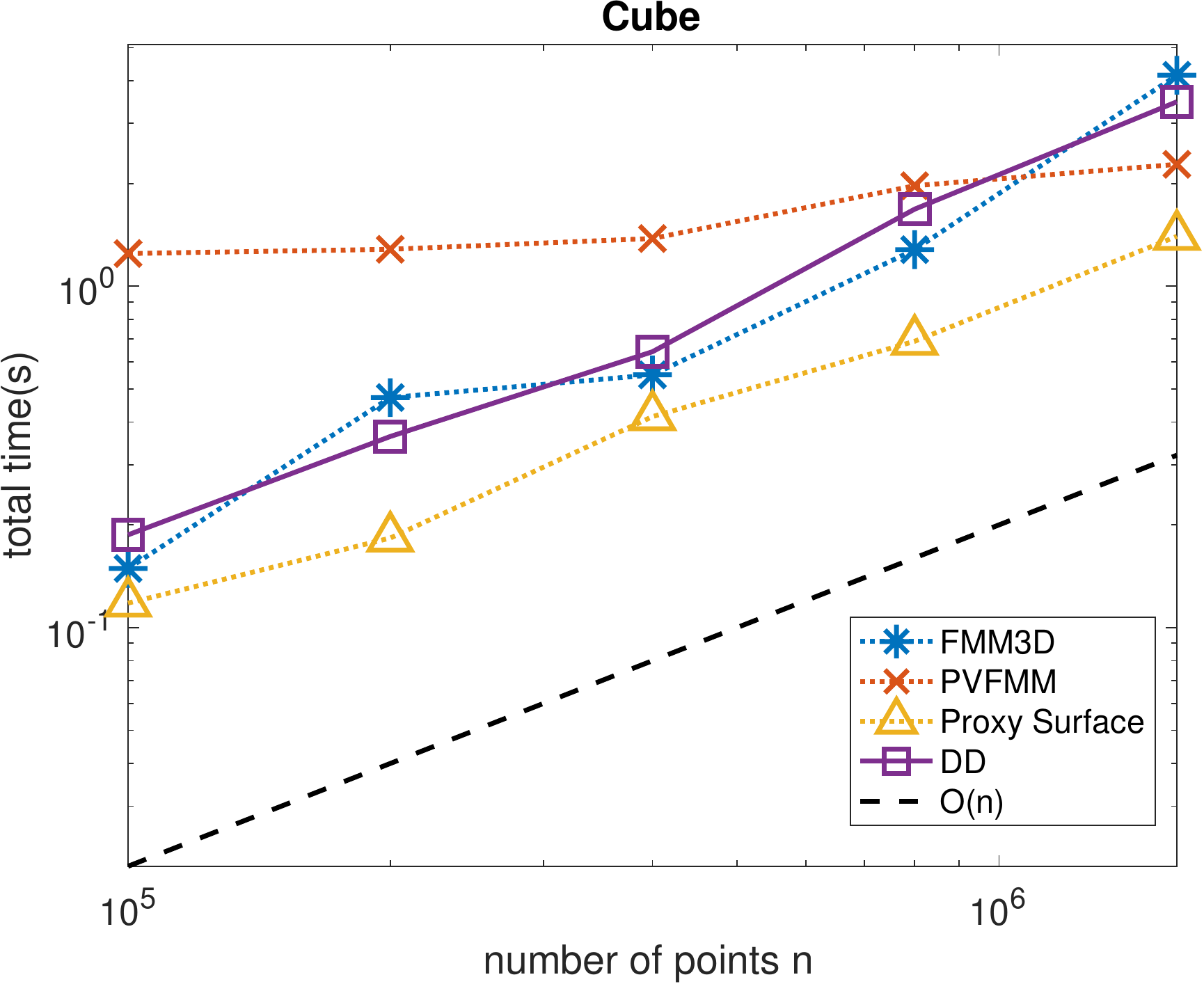}
    \includegraphics[scale=.3]{./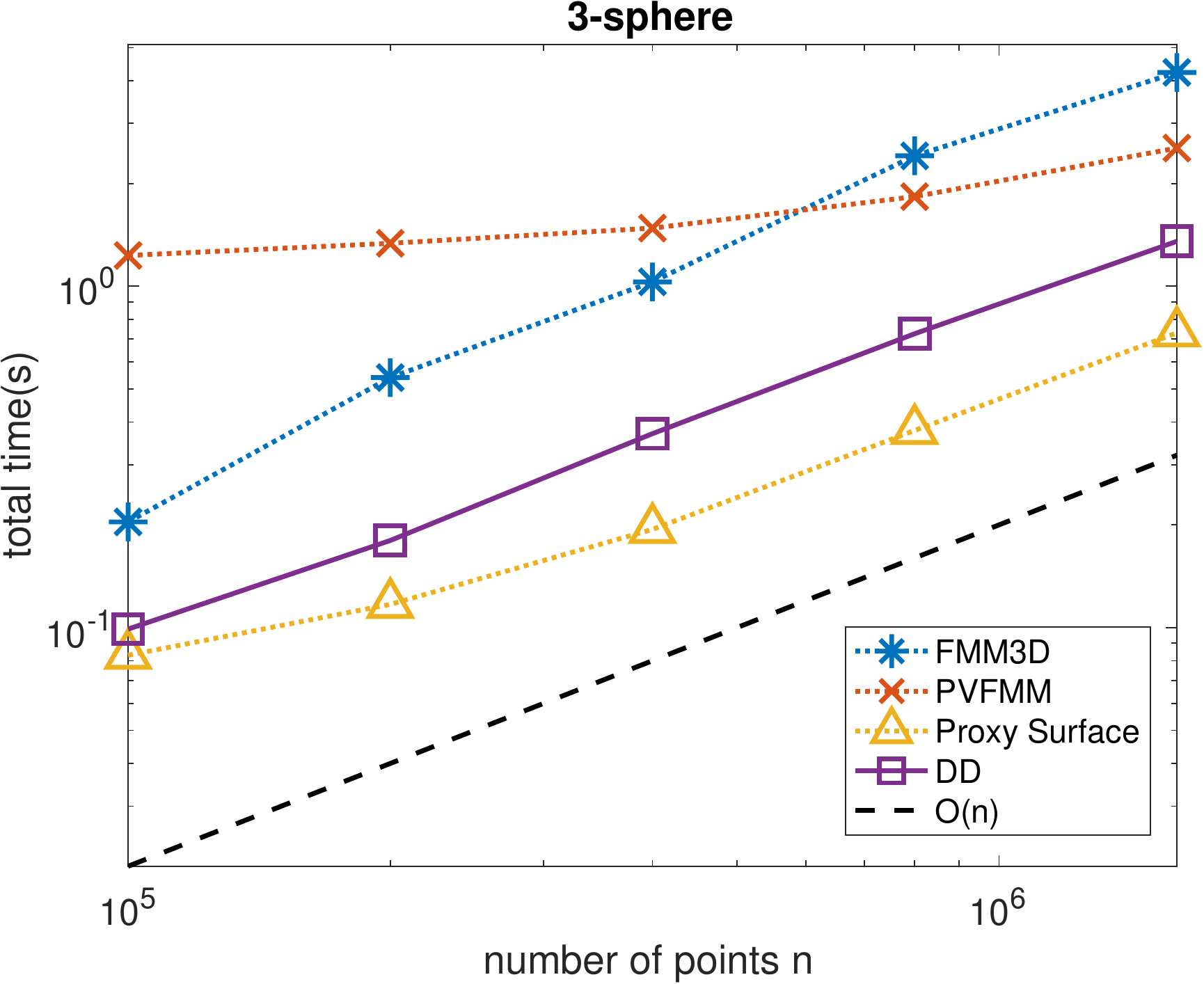}
    \includegraphics[scale=.3]{./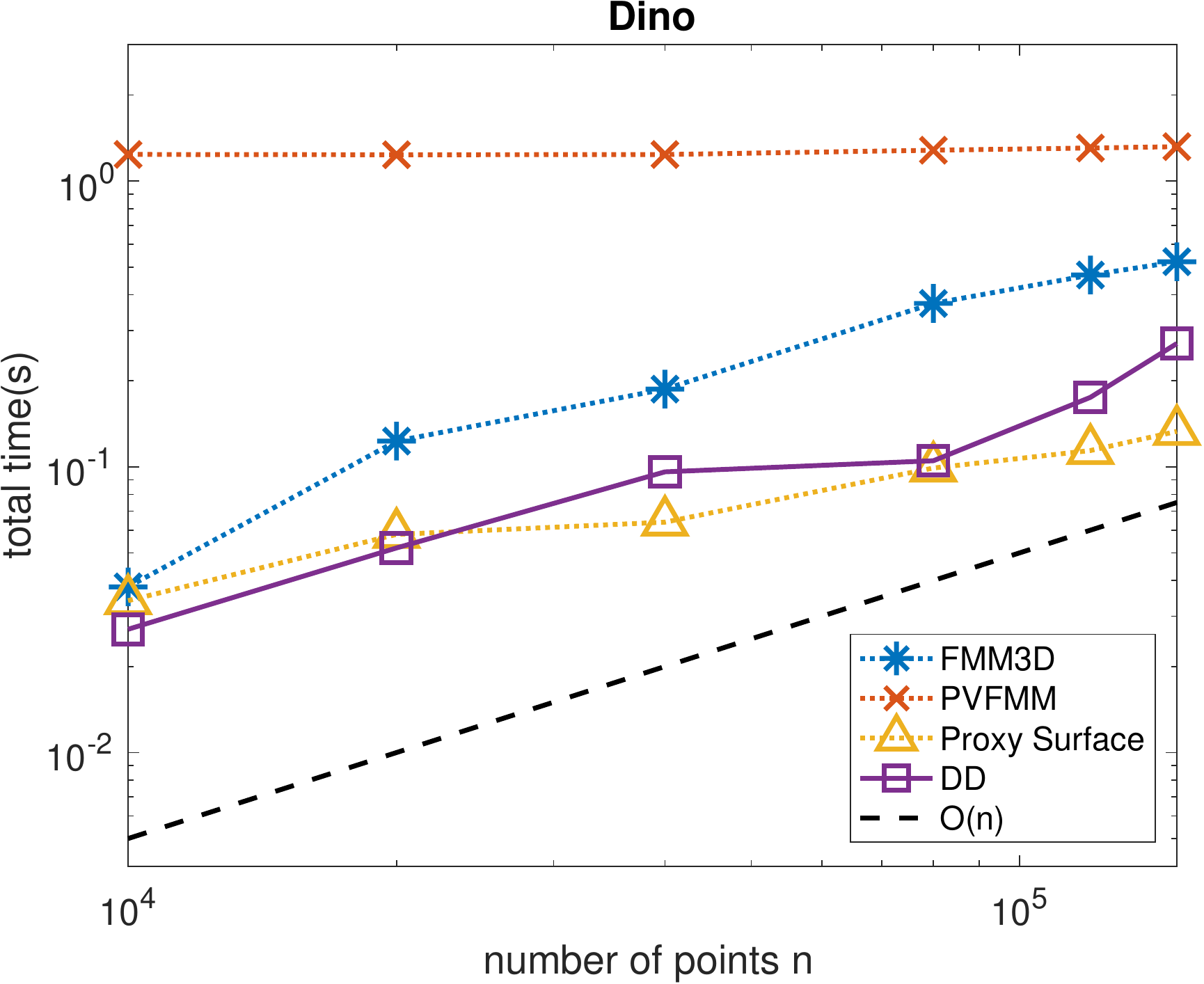}
    \caption{Section \ref{sub:Comparison special}: Total construction time of FMM3D, PVFMM, Proxy Surface and DD for approximating $n$-by-$n$ Coulomb kernel matrices with datasets Cube, 3-sphere, Dino}
    \label{fig:Geo-total}
\end{figure}

\subsection{Memory efficiency}
\label{sub:Comparison general}
In this section, we illustrate the memory efficiency of the proposed data-driven approach by comparing it to interpolation-based hierarchical matrix construction.
We test the two general-purpose methods for the different kernel functions listed in Table \ref{tab:kernels}. 
The ``3-sphere" dataset is used with $n=20000$ points.

In Figure \ref{fig:mem}, we plot the approximation error  vs the memory use for each method and each kernel function.
The memory use is measured by the cost for storing the hierarchical representation derived by the respective method.
The high memory use of interpolation-based construction is clearly seen from the plots. 
In three dimensions, the number of interpolation nodes is $k^3$ if $k$ interpolation nodes are used in each dimension.
This number may exceed the size of the admissible block to be approximated.
We found that, in practice, to achieve moderate to high approximation accuracy, a large number of interpolation nodes is needed.
The proposed data-driven method, on the other hand, significantly reduces the memory needed to achieve a certain approximation accuracy.
Equivalently, we can also conclude that,
for the same approximation rank and memory requirement, the data-driven method is able to provide a much more accurate hierarchical representation than the one derived from interpolation.
For large scale data, the memory efficiency of data-driven construction would be even more prominent.

\begin{figure}[htbp]
    \centering 
    \includegraphics[scale=.22]{./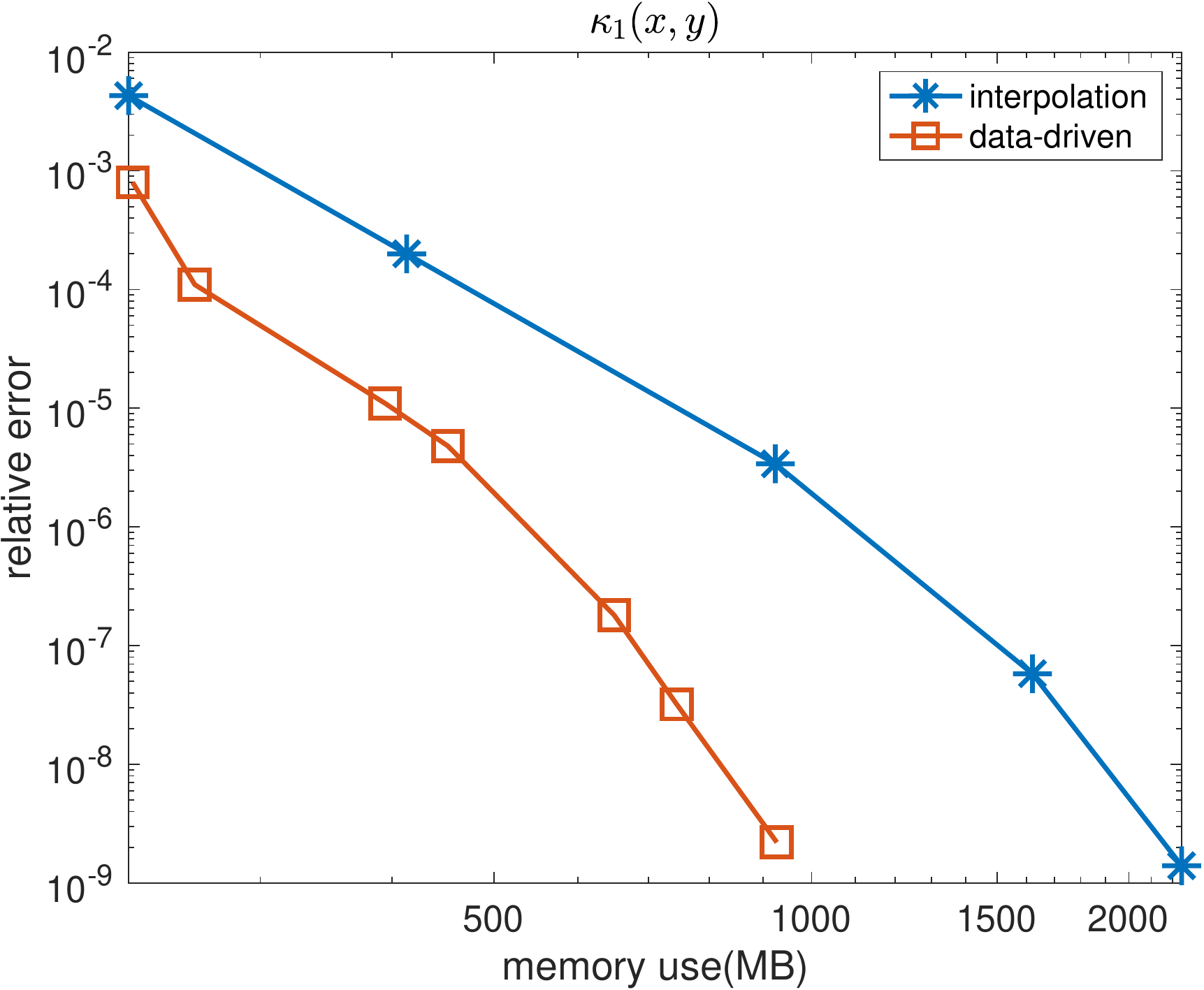} 
    \includegraphics[scale=.22]{./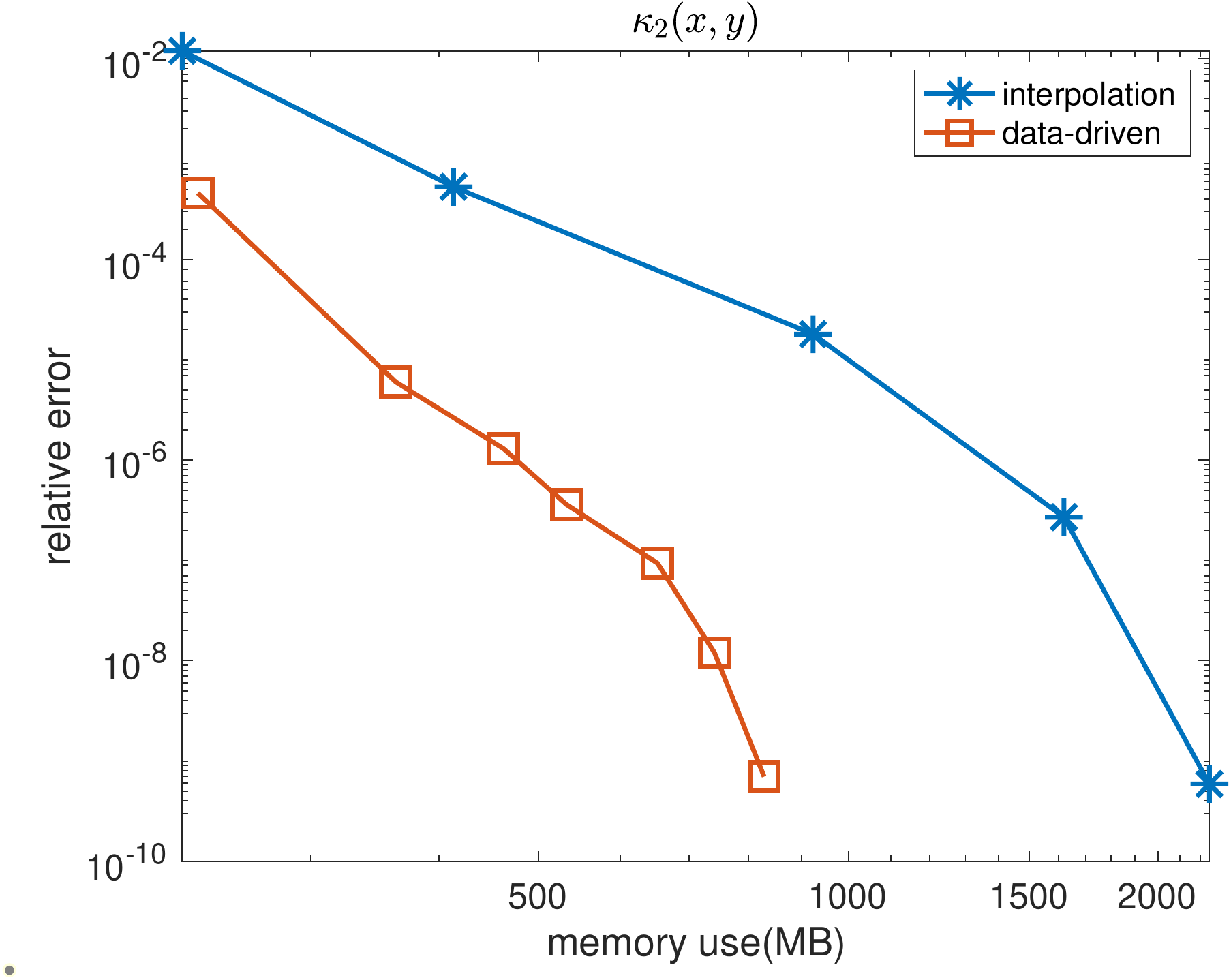} 
    \includegraphics[scale=.22]{./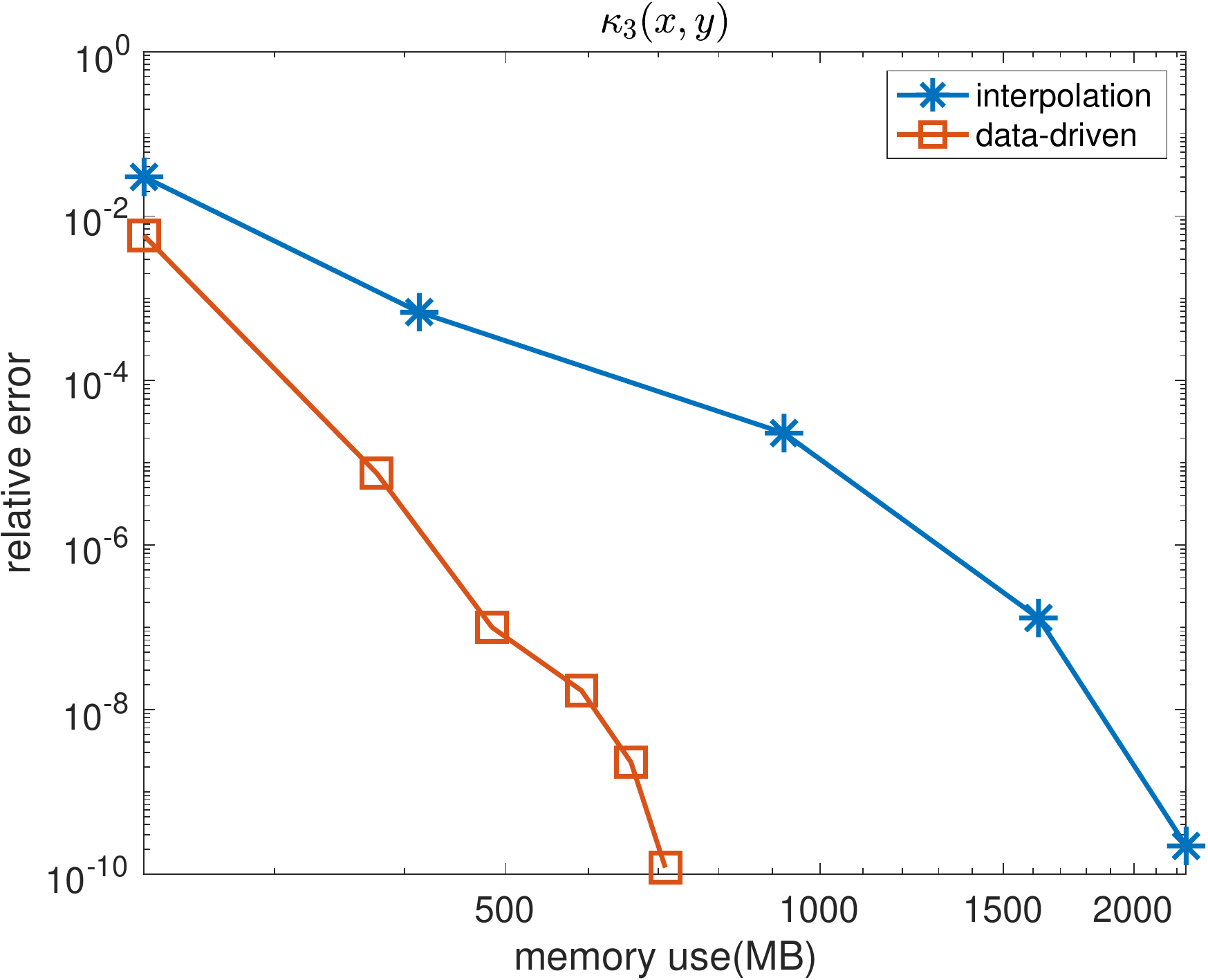} 
    \includegraphics[scale=.22]{./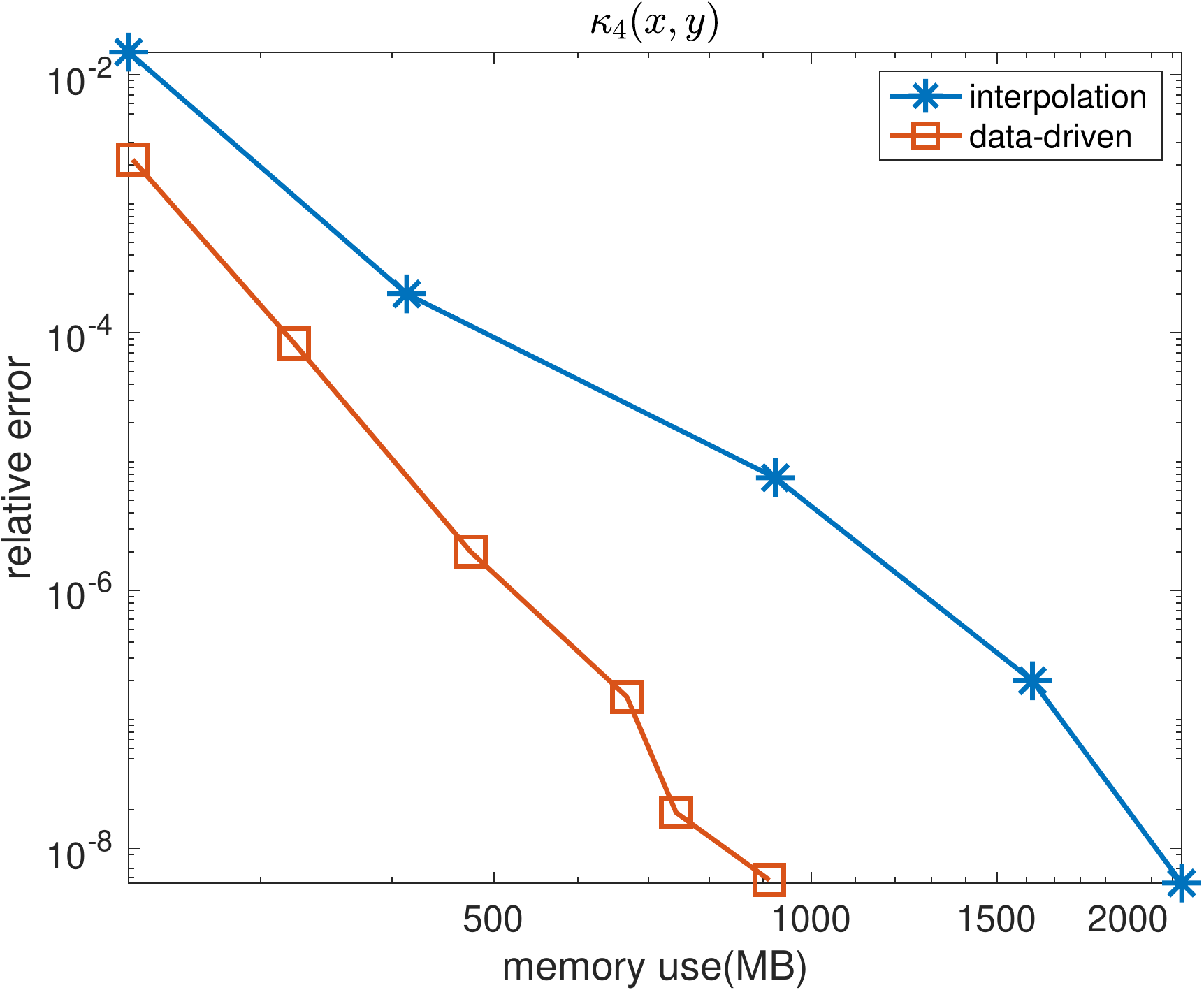} 
    \caption{Section \ref{sub:Comparison general} experiment: error vs memory use of interpolation-based and data-driven constructions for approximating the four kernel matrices (Table \ref{tab:kernels}) with the 3-sphere dataset}
    \label{fig:mem}
\end{figure}


\section{Conclusion} 
\label{sec:conclusion}
We proposed general-purpose data-driven hierarchical matrix construction accelerated by a novel hierarchical data reduction (HiDR).
The algorithm first computes a reduced data representation following the tree structure and then performs the hierarchical low-rank compression.
Different from all existing methods, HiDR entirely operates on the given dataset, \emph{without} accessing the kernel function or kernel matrix.
The complexity of the whole data-driven construction is linear with respect to the data size.
Compared to general-purpose methods such as interpolation, the new data-driven framework requires less memory for the same matrix approximation accuracy.
For special kernels like the Coulomb kernel, the general data-driven method, as a black-box approach, demonstrates competitive performance when compared to specialized methods optimized for the Coulomb kernel.
The data-driven approach yields low computational cost and is particularly efficient for data sampled from low-dimensional manifolds.
Future work includes extending the data-driven framework to the efficient construction of hierarchical matrices for high dimensional data in machine learning applications.
One appealing feature is that, when constructing hierarchical matrices for different kernel functions associated with the same dataset, HiDR only needs to be performed once, which significantly reduces the total computational cost.
Additionally, the data driven procedure can be optimized towards the special kernel function under consideration. 
In the current presentation, we focus on providing a general approach, which could be useful for general-purpose library for accelerating kernel matrix computations with hierarchical matrix representations.
Possible improvements for special kernel functions will be investigated at a future date.

\bibliography{cdfeng}
\bibliographystyle{plain}
\end{document}